\pgfplotsset{compat=1.15}
\numberwithin{equation}{section}
\DeclareMathOperator{\im}{Im}
\DeclareMathOperator{\re}{Re}
\DeclareMathOperator{\Id}{Id}
\DeclareMathOperator{\I}{I}
\newcommand{\R}{\mathbb{R}}
\newcommand{\N}{\mathbb{N}}
\newcommand{\C}{\mathbb{C}}
\DeclareMathOperator{\cinf}{\emph{C}^\infty}
\DeclareMathOperator{\cinfc}{\emph{C}_c^\infty}
\DeclareMathOperator{\supp}{supp}
\DeclareMathOperator{\op}{Op_{\textit{h}}}
\DeclareMathOperator{\Gr}{Gr}
\DeclareMathOperator{\WF}{WF_{\textit{h}}}
\newcommand{\hinf}{O(h^\infty)}
\newtheorem{thm}{Theorem}
\newtheorem*{thm*}{Theorem}
\theoremstyle{definition}
\newtheorem{defi}{Definition}[section]
\newtheorem{prop}{Proposition}[section]
\newtheorem{lem}{Lemma}[section]
\newtheorem{cor}{Corollary}[section]
\newtheorem*{rem}{Remark}
\def\paragraph{\vspace{0.4cm} \@startsection{paragraph}{4}%
  \z@\z@{-\fontdimen2\font}%
  {\normalfont\bfseries}}
\def\subparagraph{\vspace{0.3cm} \@startsection{subparagraph}{4}%
  \z@\z@{-\fontdimen2\font}%
  {\normalfont\bfseries}}
\title[Resolvent estimates in strips for obstacle scattering in 2D]{Resolvent estimates in strips for obstacle scattering in 2D and local energy decay for the wave equation}
\author{Lucas Vacossin}
\address{Universit\'e Paris-Saclay, Laboratoire de mathématiques d'Orsay, UMR 8628 du CNRS, B\^atiment 307, 91405 Orsay Cedex,}
\email{lucas.vacossin@universite-paris-saclay.fr}
\begin{document}
\maketitle

\begin{abstract}
In this note, we are interested in the problem of scattering by $J$ strictly convex obstacles satisfying a no-eclipse condition in dimension 2. We use the result of \cite{Vacossin} to obtain polynomial resolvent estimates in strips below the real axis. We deduce estimates in $O(|\lambda| \log |\lambda|)$ for the truncated resolvent on the real line and give an application to the decay of the local energy for the wave equation. 
\end{abstract}

\section{Introduction}
\subsection{Spectral gap and resolvent estimates.}

Let $(\mathcal{O}_j)_{1 \leq j \leq J}$ be open, strictly convex obstacles in $\R^2$ having smooth boundary and satisfying the \emph{Ikawa condition} of no-eclipse: for $i \neq j \neq k$, $\overline{\mathcal{O}_i}$ does not intersect the convex hull of $ \overline{ \mathcal{O}_j }\cup \overline{\mathcal{O}_k}$. Let  $$\mathcal{O} = \bigcup_{j=1}^J \mathcal{O}_j \; ; \; 
\Omega = \R^2  \setminus \overline{\mathcal{O}}. $$
\begin{figure}[h]
\begin{center}
\includegraphics[width=12cm]{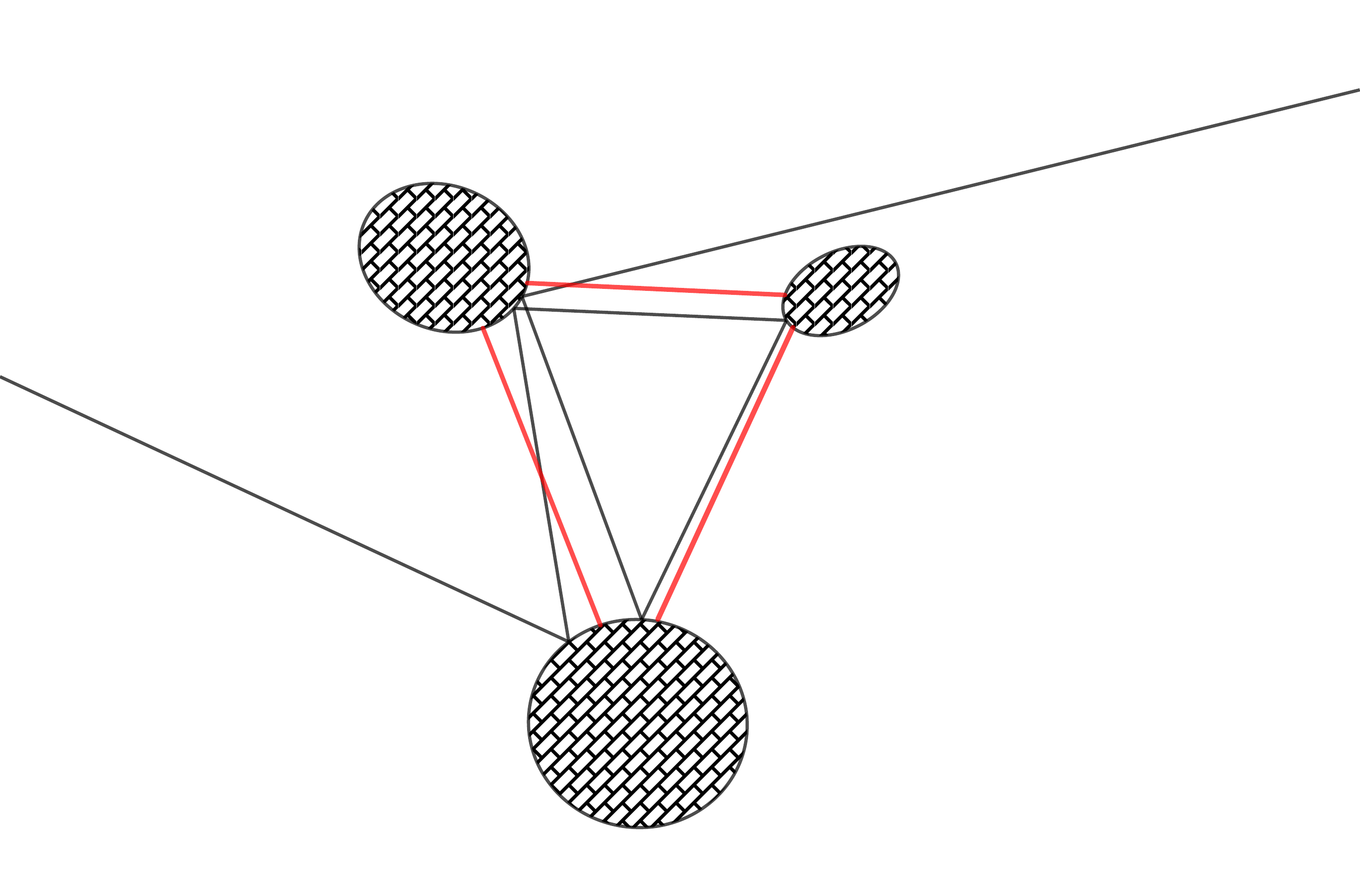}
\caption{Scattering by three obstacles in the plane}
\end{center}
\end{figure}

It is known that the resolvent $R(\lambda)$ of the Dirichlet Laplacian $-\Delta_\Omega$ in $\Omega$ continues meromorphically to the logarithmic cover of $\C$ (see for instance \cite{DyZw}, Chapter 4, Theorem 4.4).  More precisely, if $\chi \in \cinfc(\R^2)$ is equal to one in a neighborhood of $\overline{\mathcal{O}}$, 
\begin{equation}\label{resolvant}
\chi R(\lambda)\chi=\chi (-\Delta_\Omega - \lambda^2 )^{-1} \chi  : L^2 (\Omega) \to L^2(\Omega)
\end{equation}
is holomorphic in the region $\{ \im \lambda >0 \}$ and it continues meromorphically to the logarithmic cover $\Lambda$ of $\C$. Its poles are the \emph{scattering resonances} and do not depend on $\chi$. In \cite{Vacossin}, the following result has been proved :  

\begin{thm}\label{Thm1}
There exist $\gamma >0$ and $\lambda_0 >0$ such that there is no resonance in the region 
$$ [ \lambda_0, + \infty [ +i[- \gamma , 0 ]$$
seen as a region in the first sheet of $\Lambda$. 
\end{thm}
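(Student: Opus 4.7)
The plan is to follow the semiclassical reduction to an open quantum map, combined with the fractal uncertainty principle (FUP). Writing $h = 1/\re \lambda$, resonances in the strip correspond, in the semiclassical regime $h \to 0^+$, to values $z = h\lambda$ in a disk of size $Ch$ around $1$ with $\im z \geq -\gamma h$ at which an open quantum monodromy operator $M(z,h)$ has $1$ as an eigenvalue. Concretely, I would construct a Poincaré section $\Sigma$ transverse to the billiard flow near the trapped set $K$ and, following the framework of G\'erard--Sjöstrand and Nonnenmacher--Sjöstrand--Zworski, reduce the scattering problem microlocally near $K$ to the study of $M(z,h)$, built by composing Fourier integral operators quantizing the billiard return map with cutoffs implementing Dirichlet reflection. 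A resonance occurs in the region if and only if $\det(I - M(z,h)) = 0$, so it suffices to prove $\|M(z,h)^N\| < 1$ for some integer $N = N(h)$.

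Next, I would exploit the hyperbolicity of the billiard flow near $K$: under the Ikawa no-eclipse condition, $K$ is a compact hyperbolic invariant set conjugated to a subshift of finite type, and its projections onto the stable and unstable directions in $\Sigma$ are porous (Ahlfors--David $\delta$-regular for some $\delta \in (0,1)$) Cantor-like subsets of $\R$. Iterating $M(z,h)$ a logarithmic number of times $N \sim \rho |\log h|$ concentrates the microsupport of the iterates into thin $h^\rho$-neighborhoods of these fractals.

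The crucial input is then the one-dimensional FUP of Bourgain--Dyatlov: for $\delta$-regular subsets $X, Y$ of $\R$ with $\delta \in (0,1)$, there exists $\beta = \beta(\delta) > 0$ such that $\|\mathbf{1}_{X(h)} \mathcal{F}_h \mathbf{1}_{Y(h)}\|_{L^2 \to L^2} \leq C h^{\beta}$. Combined with the microlocal reduction, this yields a bound $\|M(z,h)^N\| \leq C h^{\beta/2}$ uniformly for $z$ with $\im z \geq -\gamma h$ and $\gamma > 0$ sufficiently small, ruling out the eigenvalue $1$ and hence any resonance in the strip for $\re \lambda \geq \lambda_0$.

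The main obstacle is the microlocal reduction step: upgrading the abstract hyperbolic structure of $K$ into the quantitative statement that iterates of $M(z,h)$ behave as Fourier integral operators whose range and domain are concentrated in genuine $h$-neighborhoods of $\delta$-regular fractals in the Poincaré section. This demands careful control of the symplectic geometry of the billiard map, of its interplay with the Dirichlet reflection, and of the error terms accumulated at each bounce; it is also here that the 2D restriction becomes essential, since the FUP in the required strong form is at present available only in one dimension.
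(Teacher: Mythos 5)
First, a point of order: this note does not prove Theorem~\ref{Thm1} at all --- it is quoted from \cite{Vacossin}, and the present paper only re-uses the key iterate bound (Proposition~\ref{Prop_Crucial_Estimate}, i.e.\ Proposition~4.1 of \cite{Vacossin}) to deduce resolvent estimates. Your outline is essentially the strategy of that cited reference: reduction to an open hyperbolic quantum map on a Poincar\'e section / the boundary following G\'erard--Sj\"ostrand and \cite{NSZ14}, characterization of resonances via invertibility of $\Id-\mathcal{M}(z)$, a sub-unit bound on $\mathcal{M}(z)^{N(h)}$ for $N(h)\sim\delta|\log h|$, and a one-dimensional fractal uncertainty principle as the crucial input. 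So at the level of architecture you have reconstructed the right proof.

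Two substantive caveats. First, your appeal to the Bourgain--Dyatlov FUP for Ahlfors--David $\delta$-regular sets is not justified as stated: for a general configuration of convex obstacles the stable/unstable slices of the trapped set are not known to be $\delta$-regular, and what one actually has (and what \cite{Vacossin} uses) is \emph{porosity}; one must therefore invoke the FUP for porous sets (Jin--Zhang), and moreover a version allowing general non-degenerate phases/amplitudes rather than the exact Fourier transform, since the relevant "dual" fractal lives along unstable Lagrangians. Second, the step you flag as "the main obstacle" --- propagating for times $\sim\delta|\log h|$, decomposing the iterates along a symbolic coding into Fourier integral pieces microsupported on $h$-neighborhoods of the stable/unstable Cantor structure, and controlling the errors despite the foliations being only $C^{1+\alpha}$ --- is precisely the content of \cite{Vacossin} and is not carried out in your proposal; as written it is a correct roadmap rather than a proof.
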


In this note, we reuse the arguments of \cite{NSZ14} and the main estimate in \cite{Vacossin} (Proposition 4.1) to obtain estimates for the cut-off resolvent (\ref{resolvant}) in this region. We will rather state these resolvent estimates in a semiclassical form, so that it can also be applied to more general semiclassical problems such as the scattering by a smooth compactly potential (see \cite{NSZ11}, Section 2, for precise assumptions and \cite{Vacossin}, Section 2.2 for applications of Theorem \ref{Thm1} in scattering by a potential under these assumptions). 
In obstacle scattering, the semiclassical problem is simply a rescaling : we are interested in the semiclassical operator 
$$ P(h)  = -h^2 \Delta_\Omega - 1 \quad , \quad 0< h \leq h_0$$ and spectral parameter $z \in [-\delta,\delta] + i [-Kh,Kh]$ for some fixed $K >0$ and some $\delta >0$.  We note 
\begin{equation}\label{semiclassical_resolvent}
R_h(z) = (P(h) -z)^{-1}
\end{equation} continued meromorphically from $\im z >0$ to $z \in  [-\delta,\delta] + i [-Kh,Kh]$. We prove :

\begin{thm}\label{Thm2}
Suppose that $P(h) = -h^2 \Delta_\Omega -1$  where $-\Delta_\Omega $ is the Dirichlet Laplacian in $\Omega$, or $P(h) = -h^2 \Delta +V - E_0$ where $V \in \cinfc(\R^2)$ and $E_0 \in \R^*_+$ satisfying the assumptions of \cite{NSZ11}, recalled in \ref{Section_potential_scattering}. Let $\chi \in \cinfc(\R^2)$ be equal to one in a neighborhood of $\overline{\mathcal{O}}$ (in the case of obstacle scattering) or $\supp V$ (in the case of scattering by a potential). Fix $K >0$. There exists $\delta_0 >0$, $\gamma >0$, $C>0$, $h_0 >0$ and $\beta \geq 0$ such that for all $0 < h \leq h_0$, $P(h)$ has no resonance in 
\begin{equation} 
\mathcal{D}_h \coloneqq  \{ z \in \C, \re z \in [-\delta_0, \delta_0], -\gamma h \leq \im z \leq K h  \}  
\end{equation} and for all $z \in \mathcal{D}_h$, 
\begin{equation}\label{eq_thm_2}
|| \chi R_h(z) \chi ||_{L^2 \to L^2} \leq C h^{-\beta} 
\end{equation}
\end{thm}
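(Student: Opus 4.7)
The plan is to follow the strategy developed by Nonnenmacher, Sjöstrand and Zworski in \cite{NSZ14}, which reduces the invertibility of $P(h) - z$ to a microlocal problem concentrated near the trapped set. Under the Ikawa no-eclipse condition and strict convexity, the trapped set $K_E$ in the reduced phase space is a hyperbolic fractal, and the billiard (resp. Hamiltonian) flow admits a convenient Poincaré section transverse to the trapped trajectories. I would quantize the associated Poincaré return map to obtain a family of open quantum maps $M(z,h)$ whose behavior, as a function of the spectral parameter $z \in \mathcal{D}_h$, controls whether $P(h) - z$ can be inverted on the microlocal piece of the resolvent supported near $K_E$.

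The first step would be to construct, following the scheme of \cite{NSZ14}, a well-posed Grushin problem of the form
\[
\mathcal{P}(z,h) = \begin{pmatrix} P(h) - z & R_-(z) \\ R_+(z) & 0 \end{pmatrix}
\]
acting on appropriately chosen microlocal spaces, whose Schur complement $E_{-+}(z,h)$ is microlocally equivalent to $I - M(z,h)$. By standard propagation and outgoing estimates, away from $K_E$ the resolvent is controlled by $O(h^{-1})$ without any spectral input, so the entire quantitative question is reduced to inverting $I - M(z,h)$ with polynomial loss. Both the obstacle and the potential setting fit into the same Grushin framework, the only difference being in the quantization procedure that defines $M(z,h)$.

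The decisive input is then Proposition 4.1 of \cite{Vacossin}, which provides a quantitative fractal-uncertainty type estimate for the quantized open Poincaré map in this two-dimensional setting. It yields a bound of the form $\|M(z,h)^{N(h)}\| \leq h^{\alpha}$ for some fixed $\alpha > 0$ and an iteration time $N(h) \sim C \log(1/h)$, uniformly in $z \in \mathcal{D}_h$, provided $\gamma$ is chosen small enough relative to $\alpha$. A Neumann-series argument applied to the finite block $\sum_{k=0}^{N(h)-1} M(z,h)^k$ and iteration then give the invertibility of $I - M(z,h)$ together with a bound $\|(I - M(z,h))^{-1}\| \leq C h^{-\beta}$, where $\beta$ is determined by $\alpha$, $K$ and $\gamma$. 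This simultaneously rules out resonances in $\mathcal{D}_h$ (proving the semiclassical refinement of Theorem \ref{Thm1}) and, through the Schur complement formula, produces the polynomial cut-off resolvent bound \eqref{eq_thm_2}.

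The main obstacle is the bookkeeping in the last step: one must carefully calibrate the logarithmic iteration time $N(h) \sim C \log(1/h)$ against the width $\gamma h$ of the strip, so that the exponential factor $e^{N(h) |\im z|/h} \lesssim h^{-C\gamma}$ coming from propagation along the flow during $N(h)$ returns remains absorbed by the gain $h^{\alpha}$ provided by \cite{Vacossin}. Choosing $\gamma$ sufficiently small (but uniform in $h$) is what ultimately guarantees a clean polynomial loss $h^{-\beta}$ and a strip whose width is linear in $h$, as stated.
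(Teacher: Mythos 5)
Your high-level strategy matches the paper: reduce the invertibility of $P(h)-z$ to that of $\Id - M(z,h)$ for an open quantum map $M(z,h)$ quantizing the return dynamics, invoke Proposition 4.1 of \cite{Vacossin} to obtain $\|M^{N(h)}\| \lesssim h^{\gamma}\|\alpha_h\|_\infty^{N(h)}$ with $N(h)\sim\delta|\log h|$, sum the Neumann series over one block of length $N(h)$, and calibrate the strip width $\gamma h$ against the exponential growth factor $e^{\tau|\im z|N(h)/h}$. The final calibration you describe is exactly what the paper does (Proposition \ref{Prop_Estimate_M} and the choice $\gamma_{\lim}=\frac{1}{2\tau}\frac{\gamma}{\delta}$).

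However, there are two substantive gaps. First, you present a single Grushin-problem framework for both cases. In fact the paper uses a Grushin problem only for the potential case (following \cite{NSZ11}); in the obstacle case the reduction is entirely different and goes through the boundary: one uses the Poisson operators $H_j(z)$ and trace maps $\gamma_j$ to define $\mathcal{M}(z)$ via $\Id-\mathcal{M}(z)=\gamma H(z)$, and the resolvent is recovered from $(\Id-\mathcal{M}(z))^{-1}$ by the exact algebraic identity (\ref{Key_formula_from_M_to_R}), not by a Schur complement. Moreover, in the obstacle case $\mathcal{M}(z)$ quantizes both the billiard map $\mathcal{B}^+$ and the ``shadow'' map $\mathcal{B}^-$, and one must use the no-eclipse condition (Lemma \ref{Lem_no_eclipse}) to show that the $\mathcal{M}^-$ part can be conjugated away modulo $O(h^\infty)$, reducing to $\Id-\mathcal{M}^+(z)$. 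Your proposal does not address this split, and without it the trapped set of the relevant map is not the one your argument needs.

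Second, and more importantly, Proposition 4.1 of \cite{Vacossin} requires the open quantum map to be $O(h^L)$ microlocally outside a compact neighborhood of $\mathcal{T}$ --- the hypothesis (\ref{small_outside_trapped_set}). Neither $\mathcal{M}^+(z)$ (obstacle case) nor $\mathcal{M}(z)$ (potential case) satisfies this as constructed. The paper remedies this by conjugating with $e^{\pm\op(g)}$ where $g=T\log(1/h)\,g_0$ is built from a dynamical escape function $g_0$ that grows along trajectories leaving a neighborhood of $\mathcal{T}$. This conjugation is what makes the hypotheses of Proposition \ref{Prop_Crucial_Estimate} hold, and its cost $h^{-2TC_0}$ is the source of part of the exponent $\beta$ in (\ref{eq_thm_2}). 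Your ``standard propagation and outgoing estimates away from $K_E$'' concerns the resolvent of $P(h)-z$ and does not fix the fact that the open quantum map itself is not concentrated near the trapped set. Without the escape-function conjugation, the key estimate you invoke simply does not apply to $M(z,h)$.
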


\begin{rem}
In the case of the obstacles, with these notations, for $\delta$ small enough and $h$ small enough, $z$ is related to the spectral parameter $\lambda_h(z)$ by the relation $\lambda_h(z)^2 = h^{-2}(1+z)$. As a consequence, $\lambda_h(z)$ lies in a neighborhood of $1/h$ in $\Lambda$. In particular, it lives in the first sheet of $\Lambda$, that is $\arg \lambda_h(z) \in ]-\pi/2, \pi/2[$. \\
\end{rem}

\subsection{Applications }

\paragraph{Decay of the local energy for the wave equation. }
As a first application, we obtain a decay rate $O(t^{-2})$ for the local energy of the wave equation outside the obstacles. 
 The link between resolvent estimates and energy decay is quite standard now (see for instance \cite{ZW}, Chapter 5, \cite{Lebeau96}). In the particular case of obstacle scattering, Ikawa showed exponential decay \emph{in dimension 3}, for the case of two obstacles (\cite{Ik82}) and for more obstacles under a dynamical assumption (\cite{Ik88}) involving the topological pressure $P(s)$ of the billiard flow. This assumption requires the pressure to be strictly negative at $s=1/2$ (see also \cite{NZ09}). In the case of dimension 2 (and more generally, of even dimensions), one cannot expect such an exponential decay, due to the logarithmic singularity at 0 for the free resolvent and the fact that the strong Huygens principle does not hold.  Even for the free case, the bound for the local energy is $O(t^{-2})$. This is the bound we obtain here, assuming that the initial data are sufficiently regular : 
 
 \begin{thm}\label{Thm3}
 There exists $k \in \N$ such that for all $R>0$, there exists $C_R >0$ such that the following holds: let $(u_0,u_1) \in \left( H^{k+1}(\Omega) \cap H_0^1(\Omega) \right) \times H^k(\Omega)$ be initial data supported in $B(0,R) \cap \Omega$ and consider the unique solution of the Cauchy problem 
 $$ \left\{ \begin{array}{l}
  \partial^2_t u(t,x) - \Delta u(t,x) = 0 \text{ in } \Omega \\
  u(t,x) = 0 \text{ on } \partial \Omega \\
  u_{t=0} = u_0 \\
  \partial_t u_{t=0} = u_1 
\end{array}\right.$$
Then, for $t \geq 1$, the local energy in the ball $B(0,R)$, $E_R(t)$, satisfies the bound $$E_R(t) \coloneqq \int_{B(0,R) \cap \Omega} |\nabla u(t)|^2 + |\partial_t u(t)|^2 \leq \frac{C_R}{t^2} \left( ||u_0||^2_{H^{k+1}} + ||u_1||^2_{H^k } \right) $$
 \end{thm}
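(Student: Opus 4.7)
The plan is to express $\chi u(t)$ via Stone's formula and deform the contour of integration into the region where Theorem \ref{Thm2} provides resolvent control. By the spectral theorem for $-\Delta_\Omega$, for initial data with support in $B(0,R) \cap \Omega$ and a cut-off $\chi$ equal to $1$ there and near $\overline{\mathcal{O}}$,
\[
\chi u(t) = \chi \cos(t\sqrt{-\Delta_\Omega}) u_0 + \chi \frac{\sin(t\sqrt{-\Delta_\Omega})}{\sqrt{-\Delta_\Omega}} u_1,
\]
which rewrites, via Stone's formula, as an integral over $\lambda \in \R$ of $\chi R(\lambda)\chi$ paired with $e^{\pm i \lambda t}$ and elementary multipliers. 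Using a smooth cut-off $\psi(\lambda)$ equal to $1$ near $0$, I would split this integral into a low-frequency piece $u_{\mathrm{low}}$ and a high-frequency piece $u_{\mathrm{high}}$.

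For $u_{\mathrm{high}}$, translating Theorem \ref{Thm2} with $z = h^2 \lambda^2 - 1$ (so $h \sim 1/|\lambda|$) yields $\| \chi R(\lambda) \chi \| \leq C |\lambda|^\beta$ on a strip $\{-\gamma \leq \im \lambda \leq 0\}$ of the first sheet of $\Lambda$, for $|\re \lambda|$ large. Since this strip is free of resonances (Theorems \ref{Thm1}--\ref{Thm2}), I would shift the contour of the high-frequency integral down to $\im \lambda = -\gamma/2$, introducing an exponentially small factor $e^{-\gamma t/2}$. The polynomial growth $|\lambda|^\beta$ is compensated by integrating by parts using $\partial_\lambda e^{-i\lambda t} = -it\, e^{-i\lambda t}$; each integration by parts trades one power of $t$ for one derivative on the initial data, so a choice $k > \beta + 2$ makes this contribution decay faster than any polynomial.

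For $u_{\mathrm{low}}$, I would use the classical asymptotic expansion at $\lambda = 0$ of the cut-off resolvent of the exterior Dirichlet Laplacian in even dimension (see, e.g., Vainberg, or the logarithmic branching described in \cite{DyZw}, Chapter 3), which is a combination of powers of $\lambda$ and $\log \lambda$. Inserted into Stone's formula, the resulting oscillatory integrals with logarithmic singularities produce the sharp rate $O(t^{-2})$, already present for the free 2D wave equation. The absence of resonances on the positive real axis, guaranteed by Theorem \ref{Thm1}, ensures no intermediate singularity interferes between the low- and high-frequency regimes.

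The main obstacle will be merging these two analyses into a single uniform estimate and carefully tracking the regularity index $k$. The value of $k$ is determined by $\beta$ from Theorem \ref{Thm2}, not by the target rate $t^{-2}$: the rate is fixed by the intrinsic low-frequency structure of $R(\lambda)$ in two dimensions, while $\beta$ only affects how much smoothness of the initial data is consumed when absorbing the high-frequency polynomial growth and performing the integrations by parts.
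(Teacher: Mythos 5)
Your skeleton --- a resolvent representation of the propagator, a low/high frequency splitting, a contour shift into the resonance-free strip for the high frequencies --- matches the paper's, but the two proofs part ways exactly at the step that actually produces the rate $t^{-2}$. The paper does not use any asymptotic expansion of $R(\lambda)$ at $\lambda=0$. Instead it works on the energy space $H=\mathcal{H}\oplus L^2$ with the generator $A$, writes $e^{tA}(\Id-A)^{-k}U$ as a contour integral of $e^{-it\lambda}(1+i\lambda)^{-k}(A+i\lambda)^{-1}U$ over $\im\lambda=1/2$, and deforms down to $\im\lambda=-\delta$ while detouring around $\lambda=0$ along two segments entering the \emph{lower} half-plane at a fixed angle. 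The only low-frequency input is the uniform bound $\|\chi(A+i\lambda)^{-1}\chi\|\le C$ on the sector $\mathcal{S}_{\varepsilon_0}=\{|\lambda|\le\varepsilon_0,\ \arg\lambda\in[-\pi/4,5\pi/4]\}$, taken from Burq's Appendix B.2; the decay then falls out of the elementary estimate $\int_0^{\varepsilon_0}e^{ts\sin\theta}\,ds\le C/t$ with $\sin\theta<0$, giving $O(1/t)$ for the $H$-norm and hence $O(1/t^2)$ for the energy. The horizontal pieces at $\im\lambda=-\delta$ contribute $O(e^{-\delta t})$ and the prefactor $(1+i\lambda)^{-k}$ with $k\ge\beta+4$ makes everything converge with no integration by parts.

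Two concrete gaps in your version. First, your low-frequency argument is precisely the hard part and you only assert its conclusion: after inserting the expansion of $\chi R(\lambda)\chi$ at $\lambda=0$ (which for the 2D exterior Dirichlet problem involves $\log\lambda$ and $(\log\lambda)^{-1}$ terms, not just powers) you must actually estimate the resulting oscillatory integrals and show they give $O(t^{-2})$ for the \emph{energy}, i.e.\ for $\nabla u$ and $\partial_t u$, whose resolvent representations carry extra factors $\lambda$ and $\lambda^2$; none of this is carried out, and it is where the theorem lives. The sector bound $\mathcal{S}_{\varepsilon_0}$ plus the angled contour replaces all of it. Second, your high-frequency integration by parts is not as stated: $\partial_\lambda$ also hits $\chi R(\lambda)\chi$, and $\partial_\lambda R(\lambda)=2\lambda R(\lambda)^2$, so each integration by parts gains $t^{-1}$ but costs roughly $|\lambda|^{\beta+1}$ (after inserting intermediate cutoffs), not ``one derivative of the data''; it can be repaired, but the paper's device of pre-applying $(\Id-A)^{-k}$ and taking the $t$-decay solely from $e^{-\delta t}$ on the shifted contour avoids the issue entirely.
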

 
 Theorem \ref{Thm3} is a consequence of Theorem \ref{Thm2}. This fact is proved in Section \ref{Section_decay_energy} and the proof uses the strategy of \cite{Burq99}. 
 
 \paragraph{Resolvent estimates on the real line. }
 Polynomial resolvent bounds in strips are known to imply better bounds on the real line, by using a semiclassical maximum principle (see for instance \cite{Burq04}, Lemma 4.7, or  \cite{Ing17}). As a consequence, we deduce the following estimates on the real line : 

\begin{cor}
Let $P(h)$ be one of the operators described in Theorem \ref{Thm2} and let $\chi \in \cinfc(\R^2)$ as in this Theorem. 
There exits $C_0 >0$, $\delta_0 >0$ and $h_0 >0$ such that for all $0 < h \leq h_0$ and for all $z \in [-\delta_0, \delta_0]$, 
$$|| \chi (P(h) - z )^{-1} \chi ||_{L^2 \to L^2} \leq C_0 \frac{| \log h|}{h}$$
\end{cor}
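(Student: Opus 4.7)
The plan is to apply a semiclassical maximum principle to the operator-valued function $F(z) \coloneqq \chi R_h(z) \chi$, which is holomorphic in the rectangle $\mathcal{D}_h$ provided by Theorem~\ref{Thm2}. The two ingredients required are an easy $O(1/h)$ bound on the upper part of $\mathcal{D}_h$, together with the rough polynomial bound $O(h^{-\beta})$ from Theorem~\ref{Thm2} on the whole rectangle.

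First, I would exploit the self-adjointness of $P(h)$ (on $L^2(\Omega)$ for the Dirichlet Laplacian, and on $L^2(\R^2)$ for the potential case since $V$ is real): the standard resolvent bound $\|R_h(z)\|_{L^2 \to L^2} \leq 1/|\im z|$ holds for $\im z > 0$. On the upper edge $\im z = Kh$ of $\mathcal{D}_h$ this yields $\|F(z)\|_{L^2 \to L^2} \leq \|\chi\|_\infty^2/(Kh)$. Combined with the bound $\|F(z)\|_{L^2 \to L^2} \leq C h^{-\beta}$ from Theorem~\ref{Thm2} on the remainder of $\partial \mathcal{D}_h$, this puts us precisely in the setting where the semiclassical maximum principle (Lemma~4.7 of \cite{Burq04}, or the formulation of \cite{Ing17}) can be invoked.

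Next, for each pair of unit vectors $u, v \in L^2$, I would apply the principle to the scalar holomorphic function $f_{u,v}(z) \coloneqq \langle F(z) u, v\rangle$, which satisfies $|f_{u,v}(z)| = O(1/h)$ on the top edge and $|f_{u,v}(z)| = O(h^{-\beta})$ on the remaining three sides of $\mathcal{D}_h$. The principle then produces an estimate of the form $|f_{u,v}(x)| \leq C_0 |\log h|/h$ for real $x$ in a slightly shrunken interval $[-\delta_0, \delta_0]$, with a constant $C_0$ independent of $u, v$; taking the supremum over unit vectors $u, v$ yields the claimed operator norm bound.

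The one technical point is the semiclassical maximum principle itself. Its proof is a Phragmén--Lindelöf style argument in the thin rectangle: the harmonic measure of the lateral sides at an interior real point is $O(e^{-c/h})$, which absorbs the polynomial loss on those sides, while an auxiliary multiplier of the form $e^{i A z / h}$ with $A \sim |\log h|$ balances the top and bottom bounds and produces the logarithmic factor. Since this is now a standard tool in the subject, I would invoke it as a black box, checking only that the size of the strip ($\gamma h$ below, $Kh$ above) and the two boundary estimates satisfy its hypotheses.
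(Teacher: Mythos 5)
This is exactly the paper's argument: the corollary is deduced by feeding the semiclassical maximum principle of \cite{Burq04} (Lemma 4.7) or \cite{Ing17} with precisely the two inputs you name — the polynomial bound of Theorem \ref{Thm2} on $\mathcal{D}_h$ and the self-adjointness bound $O(1/h)$ on $\{\im z = Kh\}$ — and the paper gives no further detail, so your black-box invocation with a check of the hypotheses is legitimate and complete. One small caution about your heuristic for the lemma itself: in the full rectangle of depth $\gamma h$ the harmonic measure of the bottom edge at a real point is the constant $\gamma/(\gamma+K)$, so a direct three-lines/multiplier argument there would only yield $h^{-(K+\beta\gamma)/(K+\gamma)}$, which is worse than $h^{-1}$ for $\beta>1$; the actual proof of the lemma first shrinks the lower depth to $\sim h/|\log h|$ so that the harmonic weight of the bad edge becomes $O(1/\log(1/h))$, and only then does the balancing produce the $|\log h|/h$ bound.
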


\begin{rem}
As a direct corollary of the proof of Lemma 4.7 in \cite{Burq04}, we can obtain a more general bound : for $h>0$ small enough, 
\begin{equation}\label{interpolation_estimate}
|| \chi (P(h) - z )^{-1} \chi ||_{L^2 \to L^2} \leq C_0| \log h| h^{-1 + \sigma|\im z|/h}  \, , \,z \in [-\delta_0, \delta_0] +i[-\gamma h, 0]
\end{equation}
where $\sigma >0$. With this method, based on the maximum principle for analytic functions, the value of $\sigma$ in not explicit. In fact, our proof gives a bound of the form 
$$||\chi (P(h) - z )^{-1} \chi ||_{L^2 \to L^2} \leq C| \log h| h^{-1 - M_1 - M_2 |\im z|/h}  \, , \,z \in [-\delta_0, \delta_0] +i[-\gamma h, 0] $$
where $M_2$ only depends on constants related to the billiard map (see (\ref{last_equation})). The extra $M_1$ is a consequence of the method we use, based on the use of an escape function. It is possible that a more careful analysis could allow to get rid of this extra $M_1$ and we could straighlty obtain a bound of the form (\ref{interpolation_estimate}). 
\end{rem}

This kind of estimates is known to be useful to prove smoothing effects for the Schrödinger equation and to obtain Strichartz estimates, which turns out to be crucial for the local-well posedness of the non-linear Schrödinger equation (see for instance \cite{Burq04}, \cite{BuGeTz}). Let's for instance mention the following smoothing estimates (see the references above for the proof and for pointers to the literature concerning these estimates) : 

\begin{cor}
Let $\Omega$ be as in Theorem \ref{Thm2} and let $e^{-i t \Delta_\Omega}$ be the Schrödinger propagator of the Dirichlet Laplacian $-\Delta_\Omega$ in $\Omega$. Then, for any $\varepsilon>0$ and for any $\chi \in \cinfc(\R^2)$ equal to one in a neighborhood of $\overline{\mathcal{O}}$, there exists $C>0$ such that for any $u_0 \in L^2(\Omega)$, 
$$ || \chi e^{-it \Delta_\Omega} \chi u_0 ||_{L^2 (\R_t, H^{1/2-\varepsilon}(\Omega))} \leq C ||\chi u_0||_{L^2}$$ 
\end{cor}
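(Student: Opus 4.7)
The plan is to deduce the smoothing estimate from the real-line resolvent bound of the preceding Corollary via the standard Kato smoothing / Littlewood--Paley scheme of Burq and Burq--Gérard--Tzvetkov, with the logarithmic loss in the resolvent absorbed by the $\varepsilon$ of regularity. A rescaling $\lambda = h^{-1}\sqrt{1+z}$ (so that $(-\Delta_\Omega - \lambda^2)^{-1} = h^{2} R_h(z)$) converts the Corollary into the real-line bound
$$\|\chi\,(-\Delta_\Omega - \lambda^2)^{-1}\chi\|_{L^2 \to L^2} \le C\,\frac{\log\lambda}{\lambda}, \qquad \lambda \ge \lambda_0,$$
together with the standard uniform bound on any compact $\lambda$-interval, using that $-\Delta_\Omega$ has no positive embedded eigenvalue on $\Omega$ and satisfies the limiting absorption principle.

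I would then perform a dyadic decomposition $1 = \sum_{j \ge 0}\varphi_j(\sqrt{-\Delta_\Omega})$ with $\varphi_j$ supported in $\{\sigma \sim 2^j\}$ for $j \ge 1$. Since $\varphi_j(\sqrt{-\Delta_\Omega})$ commutes with $e^{-it\Delta_\Omega}$ and on its spectral range the $H^{1/2-\varepsilon}_x$-norm is comparable to $2^{(1/2-\varepsilon)j}\|\cdot\|_{L^2}$, it suffices to establish
$$\sum_{j \ge 0} 2^{(1-2\varepsilon)j}\,\|\chi\,\varphi_j(\sqrt{-\Delta_\Omega})\,e^{-it\Delta_\Omega}\chi u_0\|_{L^2_t L^2_x}^2 \lesssim \|\chi u_0\|_{L^2}^2.$$
For each $j$, the TT$^{*}$/Kato smoothing identity (via Plancherel in $t$ and the spectral theorem applied to $-\Delta_\Omega$) reduces the inner norm to a uniform resolvent estimate, yielding
$$\|\chi\,\varphi_j(\sqrt{-\Delta_\Omega})\,e^{-it\Delta_\Omega}\chi u_0\|_{L^2_{t,x}}^2 \le C\,\sup_{\sigma \sim 2^j}\|\chi\,(-\Delta_\Omega - \sigma^2)^{-1}\chi\|\,\|\varphi_j(\sqrt{-\Delta_\Omega})\chi u_0\|_{L^2}^2 \le C\,j\,2^{-j}\,\|\varphi_j(\sqrt{-\Delta_\Omega})\chi u_0\|_{L^2}^2.$$

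Summing, the total is controlled by $\sum_j j\,2^{-2\varepsilon j}\|\varphi_j(\sqrt{-\Delta_\Omega})\chi u_0\|_{L^2}^2 \le C_\varepsilon\|\chi u_0\|_{L^2}^2$, using Littlewood--Paley orthogonality and that $j\,2^{-2\varepsilon j}$ is uniformly bounded for $j \ge 0$. The main subtlety I foresee is the rigorous derivation of the Kato smoothing step in the exterior-domain setting, which hinges on absolute continuity of the spectrum of $-\Delta_\Omega$ on $(0,\infty)$ and the meromorphic continuation of $\chi R(\lambda)\chi$; a standard commutator estimate between $\chi$ and $\varphi_j(\sqrt{-\Delta_\Omega})$, gaining a factor $2^{-j}$ absorbed in the logarithmic loss, handles the fact that the cutoffs do not commute. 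Both points are classical once the resolvent bound is in hand, and the substantive content of the argument is simply that the logarithmic factor, being beaten by any positive power of $\lambda$, yields precisely $1/2 - \varepsilon$ derivatives of gain for any $\varepsilon > 0$.
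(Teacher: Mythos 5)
Your argument is correct, and it is essentially the proof the paper has in mind: the paper does not prove this corollary itself but defers to \cite{Burq04} and \cite{BuGeTz}, whose argument is exactly your scheme — the $O(|\log h|/h)$ cut-off resolvent bound on the real axis, combined with Kato smoothing ($TT^*$ plus Plancherel in time) on dyadic spectral blocks of $-\Delta_\Omega$, with the logarithmic loss absorbed by the $\varepsilon$ of regularity and the low-frequency/commutator issues handled by the boundedness of $\chi R(\lambda)\chi$ near $\lambda=0$ and the absence of real resonances recorded in Section \ref{Section_decay_energy}.
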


\paragraph{Organization of the paper. } In Section \ref{Section_2}, we prove Theorem \ref{Thm2} using the crucial estimate proved in \cite{Vacossin} and recalling the reduction to open quantum maps performed in \cite{NSZ11} and \cite{NSZ14}. The main semiclassical ingredients of the above paper are recorded in Appendix \ref{Appendix}. Section \ref{Section_decay_energy} is devoted to the proof of the local energy decay for the wave equation. 

\subsubsection*{Acknowledgment}
The author would like to thank Stéphane Nonnenmacher for his careful reading of a preliminary version of this work and Maxime Ingremeau for suggesting to write this note.

\section{Proof of Theorem \ref{Thm2}}\label{Section_2}

In this section, we prove the main resolvent estimate of this note. The central point, concerning a resolvent bound for open hyperbolic quantum maps, is common to the case of obstacle scattering and scattering by a potential. However, the reduction to open quantum maps differs in the two above situations, this is why we distinguish the two cases. We begin by recalling the definitions of open quantum maps from \cite{NSZ11} and \cite{NSZ14} and the result of \cite{Vacossin} leading to a crucial resolvent bound. 

\subsection{Resolvent bound for open quantum maps. }

\subsubsection{Definitions. }
The following long definition is based on the definitions in the works of Nonnenmacher, Sjöstrand and Zworski in \cite{NSZ11} and \cite{NSZ14} specialized to the 2-dimensional phase space. 
Consider open intervals $Y_1, \dots, Y_J$ of $J$ copies of $\R$ and set  : 
$$Y = \bigsqcup_{j=1}^J Y_j \subset \bigsqcup_{j=1}^J \R$$
and consider 
$$ U = \bigsqcup_{j=1}^J U_j \subset \bigsqcup_{j=1}^J T^*\R^d \quad ; \quad \text{where } U_j \Subset T^*Y_j \text{ are open sets }$$
The Hilbert space $L^2(Y)$ is the orthogonal sum $\bigoplus_{i=1}^J L^2(Y_i)$. 

For $j=1, \dots, J$, consider open disjoint subsets $\widetilde{D}_{i j } \Subset U_j$, $1 \leq i \leq J$, \emph{the departure sets}, and similarly, for $i= 1, \dots, J$ consider open disjoint subsets $\widetilde{A}_{i j } \Subset U_i$, $1 \leq j \leq J$, \emph{the arrival sets} (see Figure \ref{figure_example_1}). We assume that there exist smooth symplectomorphisms 

\begin{figure}
\centering
\includegraphics[scale=0.5]{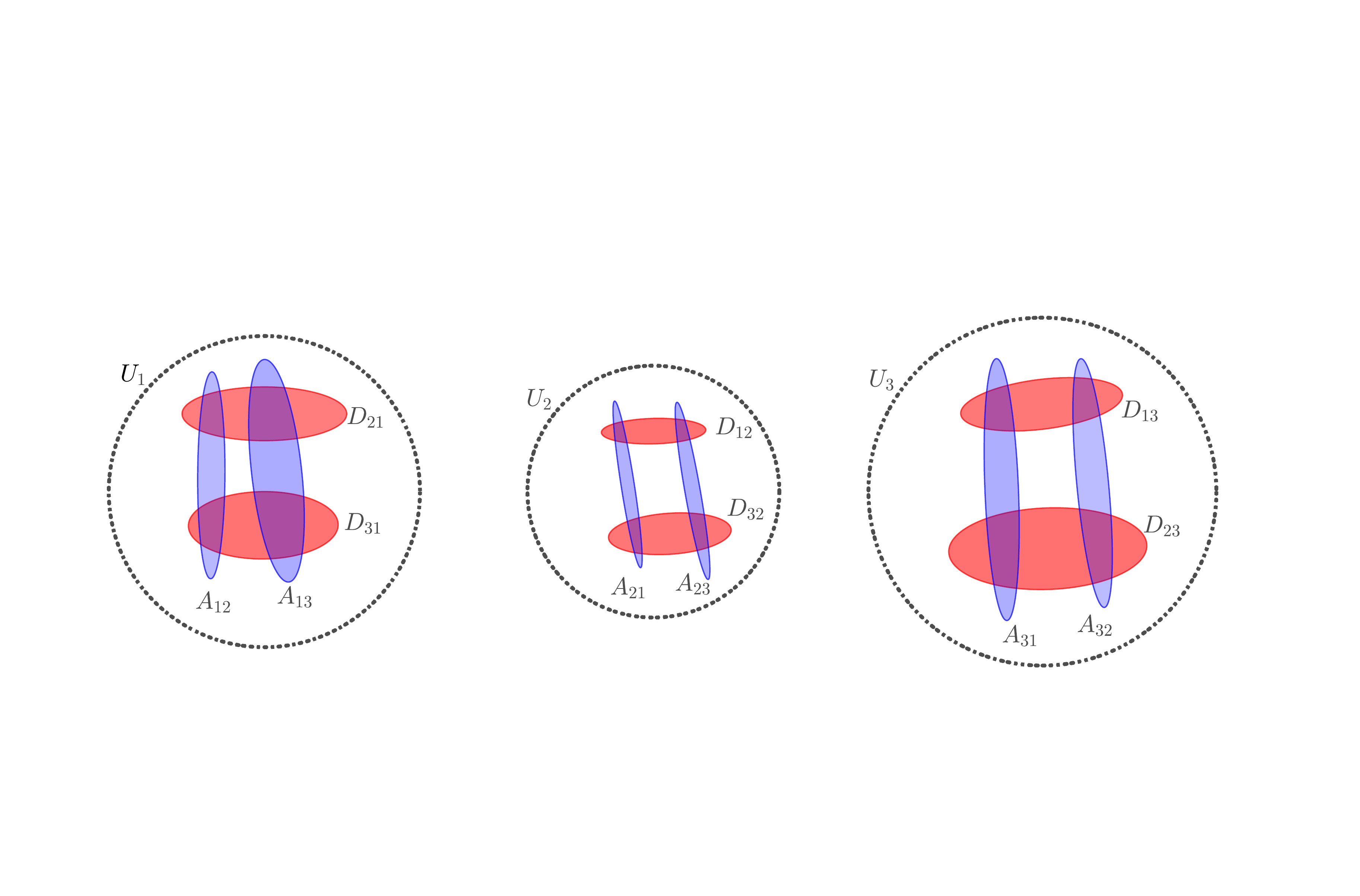}
\caption{A schematic example with $J=3$  in a case where $D_{ii} = \emptyset$ for $i=1,2,3$.}
\label{figure_example_1}
\end{figure}

\begin{equation}
F_{i j } : \widetilde{D}_{i j } \to F_{ij } \left(\widetilde{D}_{i j } \right) = \widetilde{A}_{i j }
\end{equation}
We note $F$ for the global smooth map $F : \widetilde{D} \to \widetilde{A}$ where $\widetilde{A}$ and $\widetilde{D}$ are the full arrival and departure sets, defined as 
$$\widetilde{A} = \bigsqcup_{i=1}^J \bigsqcup_{j=1}^J \widetilde{A}_{i j } \subset \bigsqcup_{i=1}^J U_i $$ 
$$  \widetilde{D} = \bigsqcup_{j=1}^J \bigsqcup_{i=1}^J \widetilde{D}_{i j } \subset \bigsqcup_{j=1}^J U_j $$ 
We define the outgoing (resp. incoming) tail by $\mathcal{T}_+ \coloneqq \{ \rho \in U ; F^{-n}(\rho) \in U , \forall n \in \N \} $ (resp. $\mathcal{T}_- \coloneqq \{ \rho \in U ; F^{n}(\rho) \in U , \forall n \in \N \} $). We assume that they are closed subsets of $U$ and that the \emph{trapped set} 
\begin{equation}\label{trapped_set} 
\mathcal{T} = \mathcal{T}_+ \cap \mathcal{T}_-
\end{equation}
is compact. We also assume that 
\begin{center}
$\mathcal{T}$ is \emph{totally disconnected.}   
\end{center}

For $i , j \in \{1, \dots , J \}$, we note $\mathcal{T}_i = \mathcal{T} \cap U_i$, $$D_{i j } = \{ \rho \in \mathcal{T}_j ; F(\rho) \in \mathcal{T}_i \} \subset  \widetilde{D}_{i j }  $$ and $$
A_{i j } = \{ \rho \in \mathcal{T}_i ; F^{-1}(\rho) \in \mathcal{T}_j\} \subset \widetilde{A}_{ij}$$ 

\begin{rem}
It is possible that for some values of $i$ and $j$, $\widetilde{D}_{ij} = \emptyset$. For instance, when dealing with the billiard map (see (\ref{billiard_map_definition})), the sets $\widetilde{D}_{ii}$ are all empty. 
\end{rem}

We then make the following hyperbolic assumption. 

\begin{equation}\label{Hyperbolicity_assumption}
\mathcal{T} \text{ is a hyperbolic set for }F  
\end{equation}
Namely, for every $\rho \in \mathcal{T}$, we assume that there exist stable/unstable tangent spaces $E^{s}(\rho)$ and $E^{u}(\rho)$ such that : 
\begin{itemize}
\item $\dim E^{s}(\rho) = \dim E^{u}(\rho) = 1$
\item $T_\rho U = E^{s}(\rho) \oplus E^{u}(\rho)$
\item there exists $\lambda >0$, $C >0$  such that for every $v \in T_\rho U$ and any $n \in \N$, 
\begin{align}
v \in E^{s}(\rho)  \implies ||d_\rho F^n (v) || \leq C e^{-n \lambda} || v||  \label{hyp1}\\
v \in E^{u}(\rho) \implies ||d_\rho F^{- n} (v) || \leq C e^{-n \lambda} \label{hyp2} || v||
\end{align}where  $|| \cdot ||$ is a fixed Riemannian metric on $U$. 
\end{itemize}
The decomposition of $T_\rho U$ into stable and unstable spaces is assumed to be continuous. \\

\begin{figure}
\begin{subfigure}{0.45 \textwidth}
\centering
\includegraphics[scale=0.2]{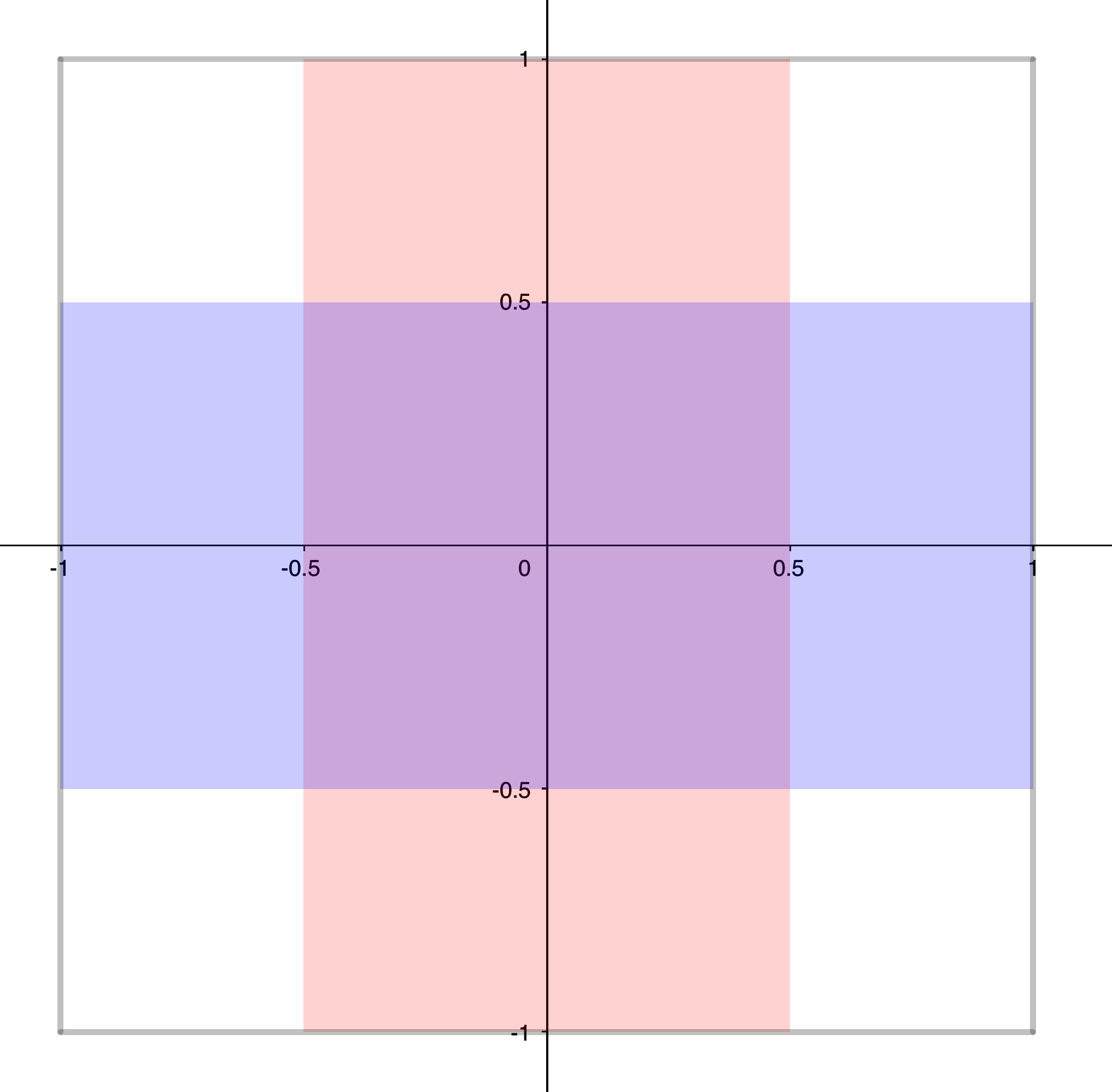}
\caption{Single hyperbolic fixed point.}
\end{subfigure}
\begin{subfigure}{0.45 \textwidth}
\centering
\includegraphics[scale=0.25]{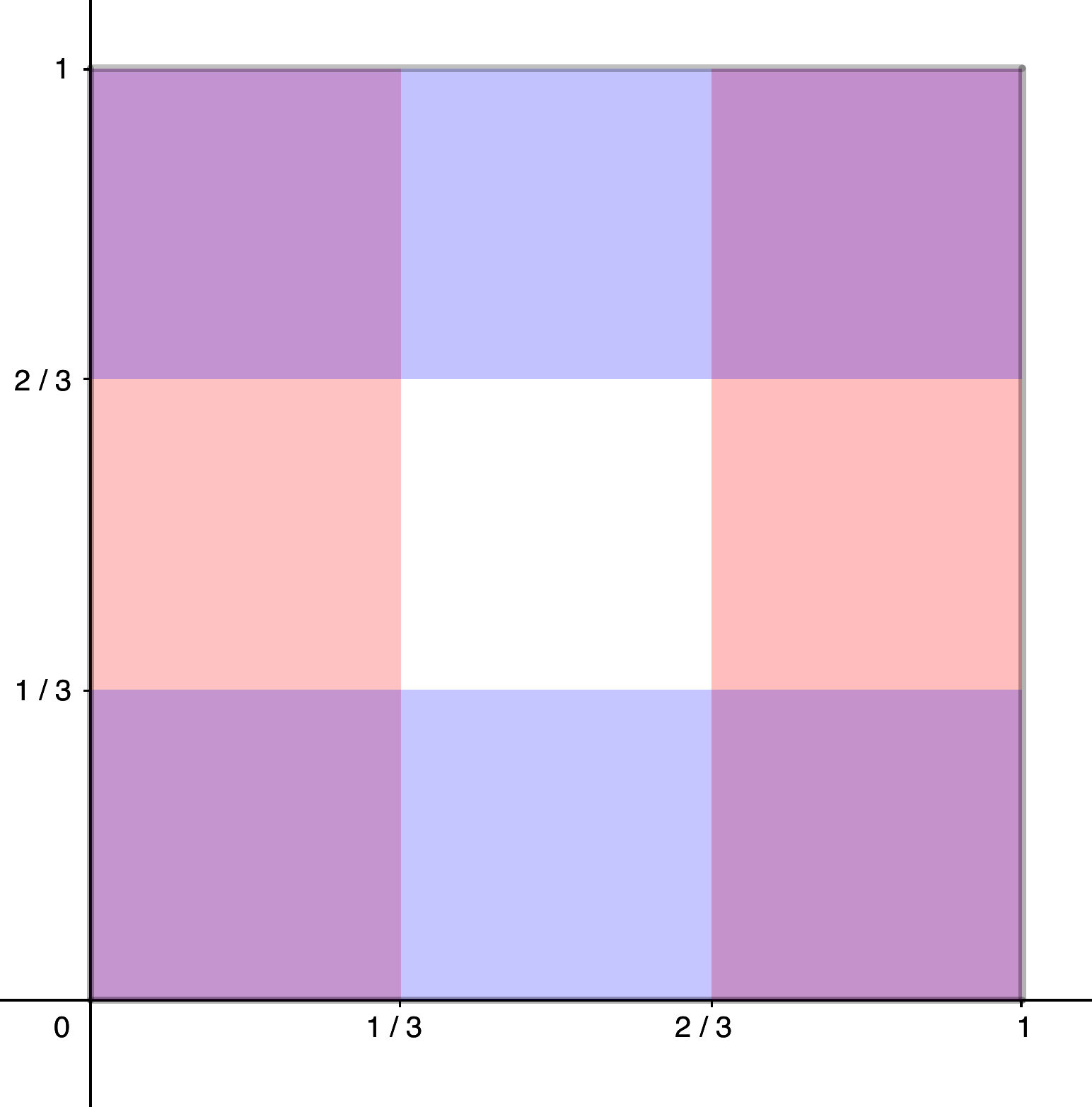}
\caption{An open baker's map.}
\end{subfigure}
\caption{Examples when $J=1$. The departure sets are in blue, the arrival sets in red. In the first example, $U_1 = ]-1,1[^2 \subset T^*\R$ , $D_{11} = ]-1,1[ \times ]-1/2,1/2[$, $A_{11}=]-1/2,1/2[ \times ]-1,1[$ with $F(x,\xi) = (x/2, 2 \xi)$. The trapped set is reduced to a single hyperbolic fixed point. The second example is built on the model of an open baker's map. We have $U_1 = ]0,1[^2 \subset T^*\R$ , $D_{11} = ]0,1[ \times ]0,1/3[ \cup ]0,1[ \times ]2/3,1[ $, $A_{11}=]0,1/3[ \times ]0,1[ \cup ]2/3,1[ \times ]0,1[$. In such a model, the map $F$ is piecewise affine and given by $F(x+a ,\xi)=(3x, a + \xi /3)$ for $a \in \{0 ,2 \}$, $(x,\xi) \in ]0,1[^2$. }
\label{figure_example_2}
\end{figure}

Here ends the description of the classical map. See Figure \ref{figure_example_2} for simple examples of such open hyperbolic maps. We then associate to $F$ \textit{open quantum hyperbolic maps}, which are its quantum counterparts. 

\begin{defi}\label{def_FIO}
Fix $\delta \in [0,1/2[$. 
We say that $T=T(h)$ is an \emph{open quantum hyperbolic map} associated with $F$, and we note $T=T(h) \in I_{\delta} ( Y \times Y , \Gr(F)^\prime )$ if : 
for each couple $(i,j) \in \{ 1, \dots , J \}^2$, there exists a semiclassical Fourier integral operator  $ T_{ i j }=T_{ i j } (h) \in I_{\delta} ( Y_j \times Y_i , \Gr(F_{ij})^\prime )$ associated with $F_{ i j}$ in the sense of definition \ref{Def_FIO_local}, such that
$$ T = ( T_{i j } )_{ 1 \leq i,j \leq J }  : \bigoplus_{i=1}^J L^2 (Y_i) \to \bigoplus_{i=1}^J L^2 (Y_i)$$ 
In particular $\WF^\prime (T) \subset \widetilde{A} \times \widetilde{D}$. 
We note $I_{0^+}(Y \times Y , \Gr(F)^\prime)= \bigcap_{ \delta >0}  I_{\delta} ( Y \times Y , \Gr(F)^\prime )$. 
\end{defi}

We will say that $T \in I_{0^+}(Y \times Y, \Gr(F)^\prime)$ is microlocally invertible near $\mathcal{T}$ if there exists a neighborhood $U^\prime \subset U$ of $\mathcal{T}$ and an operator $T^\prime \in I_{0^+}(Y \times Y , \Gr(F^{-1})^\prime)$  such that, for every $u =(u_1, \dots, u_J) \in L^2(Y) $  
$$ \forall j \in \{1, \dots, J \}, \WF(u_j) \subset U^\prime \cap U_j \implies TT^\prime u = u + \hinf ||u||_{L^2} , T^\prime T u = u +  \hinf ||u||_{L^2} $$

 Suppose that $T$ is microlocally invertible near $\mathcal{T}$ and recall that $T^* T \in \Psi_{0^+}(Y) \subset \Psi_{1/4}(Y)$ (this choice $\delta=1/4$ is arbitrary). Then, we can write
 $$T^* T = \op( a_h) + O(h^{1/4})_{L^2 \to L^2}$$ where $a_h$ is a smooth principal symbol in the class $S_{0^+}(U)$ (the definition of this symbol class is recalled in the appendix). We note $\alpha_h= \sqrt{|a_h|}$ and call it the \emph{amplitude} of $T$. Since $T$ is microlocally invertible near $\mathcal{T}$, $|a_h| >c^2$ near $\mathcal{T}$, for some $h$-independent constant $c>0$, showing that $\alpha_h$ is smooth and larger than $c$ in a neighborhood of $\mathcal{T}$. 
 
 \begin{rem}
 If $T$ has amplitude $\alpha$, at first approximation, $T$ transforms a wave packet $u_{\rho_0}$ of norm 1 centered at a point $\rho_0 $ lying in a small neighborhood of $\mathcal{T}$ into a wave packet of norm $\alpha(\rho_0)$ centered at the point $F(\rho_0)$. 
 \end{rem}

\subsubsection{Crucial resolvent bound.}
We now consider $M(h)$ an open quantum hyperbolic map, associated with $F$. We suppose that $M(h)$ is microlocally invertible near $\mathcal{T}$. Additionally, we make the following assumption :  there exists $L >0$ and $\phi_0 \in \cinfc(T^*Y,[0,1])$ such that $\supp (\phi_0)$ is contained in a compact neighborhood $\mathcal{W}$ of $\mathcal{T}$, $\mathcal{W} \subset \widetilde{D}$, $\phi_0 = 1$ in a neighborhood of $\mathcal{T}$ and  
\begin{equation}\label{small_outside_trapped_set}
M(h)(1 - \op(\phi_0) ) = O(h^{L})
\end{equation} 
Let us note $\alpha_h $ the amplitude of $M(h)$ and $||\alpha_h||_\infty$ its sup norm in $\mathcal{W}$. It is a priori $h$-dependent, but it is uniformly bounded in $h$. Proposition 4.1 in \cite{Vacossin} then states that : 

\begin{prop}\label{Prop_Crucial_Estimate} Suppose that $M(h)$ satisfies the above assumptions. 
There exists $\delta >0$, $ \gamma >0$, $h_0 >0$ and a family of integer $N(h) \sim \delta |\log h|$, defined for $0< h \leq h_0$, such that for all $0 < h \leq h_0$, 
\begin{equation}\label{equation_spectral_gap}
||M(h)^{N(h)}||_{L^2 \to L^2} \leq h^\gamma ||\alpha_h||_\infty^{N(h)} 
\end{equation}
\end{prop}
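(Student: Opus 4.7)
The plan is to follow the now-standard "hyperbolic dispersion plus fractal uncertainty principle" strategy, which in the $2$-dimensional phase-space setting reduces to the one-dimensional Bourgain--Dyatlov FUP. The target time $N(h) \sim \delta |\log h|$ is a fraction of the Ehrenfest time: beyond this time, the iterate $M(h)^{N}$ ceases to be a single FIO, and one must instead exploit the combinatorial spreading of the dynamics along a symbolic coding of $\mathcal{T}$.

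First I would build a Markov partition of $\mathcal{T}$ into finitely many pieces indexed by a finite alphabet, using the total disconnectedness of $\mathcal{T}$ together with the hyperbolic splitting (\ref{hyp1})--(\ref{hyp2}). Choose a smooth partition of unity $\{\chi_a\}$ subordinate to a thickening of this partition, with $\sum_a \chi_a = 1$ on $\supp \phi_0$. Setting $\Pi_a = \op(\chi_a)$, using (\ref{small_outside_trapped_set}), and iterating Egorov's theorem $N$ times to propagate each $\chi_a$ along $F$, I would write, modulo a negligible error,
$$M(h)^{N} = \sum_{\mathbf{a}} \Pi_{a_N} M(h) \Pi_{a_{N-1}} M(h) \cdots M(h) \Pi_{a_0} + \hinf,$$
where the sum runs over \emph{admissible} symbolic words $\mathbf{a} = (a_0, \ldots, a_N)$; inadmissible words have their wavefront set driven outside a neighborhood of $\mathcal{T}$ and are thus negligible. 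The number of admissible words grows only like $e^{h_{\mathrm{top}} N}$.

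For each admissible word, the microsupport of the word-operator $M(h)^{\mathbf{a}}$ is concentrated, by (\ref{hyp1})--(\ref{hyp2}), in a phase-space tube of width $O(e^{-\lambda N})$ in the unstable direction on the source side and in the stable direction on the target side. Each factor $M(h)$ contributes at most $\|\alpha_h\|_\infty$ to the operator norm, since $\alpha_h$ is the square root of the principal symbol of $M(h)^* M(h)$. At $N \sim \delta |\log h|$ these tubes reach the semiclassical scale $h^{1/2}$, so after a microlocal FBI change of variables aligned with the stable/unstable foliations of $F$ on $\mathcal{T}$, the $L^2$-norm of each word-operator is controlled by that of a cut-off semiclassical Fourier transform $\mathbf{1}_{X_{\mathbf{a}}} \mathcal{F}_{h} \mathbf{1}_{Y_{\mathbf{a}}}$, where $X_{\mathbf{a}}, Y_{\mathbf{a}} \subset \R$ are $h$-neighborhoods of the one-dimensional projections of $\mathcal{T}_\pm$. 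The Bourgain--Dyatlov FUP then yields
$$\bigl\| \mathbf{1}_{X(h)} \mathcal{F}_h \mathbf{1}_{Y(h)} \bigr\|_{L^2 \to L^2} \leq C h^{\beta},$$
with an explicit $\beta > 0$ depending only on the porosity of the Cantor-like projections of $\mathcal{T}$. A Cotlar--Stein almost-orthogonality argument combining this with the amplitude bound produces the target estimate $h^{\gamma} \|\alpha_h\|_\infty^{N}$, with $\gamma = \beta - C h_{\mathrm{top}} \delta > 0$, provided $\delta$ is chosen small enough.

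The main obstacle is the almost-orthogonality step: at the Ehrenfest time one must verify that the tubes $\WF^\prime(M(h)^{\mathbf{a}})$ overlap sufficiently little so that the exponential combinatorial factor $e^{h_{\mathrm{top}} N}$ does not swamp the FUP gain $h^\beta$. This forces the partition $\{\chi_a\}$ to be supported on tubes of width $h^{1/2-\epsilon}$, placing the construction at the edge of the $I_\delta$ calculus of Definition \ref{def_FIO} with $\delta$ close to $1/2$, and requires propagating Egorov-type estimates up to the Ehrenfest time in order to justify the word decomposition together with the FBI reduction.
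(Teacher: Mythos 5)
The paper does not actually reprove Proposition \ref{Prop_Crucial_Estimate}: it is quoted from \cite{Vacossin} (Proposition 4.1), and the only argument supplied here is the reduction of $M(h)$ to the form $T\op(\alpha)$ with $T$ microlocally unitary near $\mathcal{T}$ (possible because $\mathcal{T}$ is totally disconnected), plus the observation that the part of $M(h)$ microsupported away from $\mathcal{T}$ contributes only $\hinf$ to (\ref{equation_spectral_gap}) once $N(h)$ exceeds a fixed $N_0$. Your proposal instead attempts to reprove the black box, and your outline (word decomposition up to a logarithmic time, hyperbolic propagation, reduction to a one-dimensional fractal uncertainty principle via porosity) is indeed the strategy of the cited proof. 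But as a proof it has a genuine gap precisely at the step you yourself flag as ``the main obstacle,'' and the fix you propose does not work. You need the combinatorial factor $e^{h_{\mathrm{top}}N}=h^{-h_{\mathrm{top}}\delta}$ to be beaten by the FUP gain, and you propose to arrange $\gamma=\beta-Ch_{\mathrm{top}}\delta>0$ by ``choosing $\delta$ small enough.'' This contradicts your earlier requirement that at time $N\sim\delta|\log h|$ the propagated tubes reach the uncertainty scale $h^{1/2}$: that requirement pins $\delta$ to (a fraction of) the Ehrenfest time determined by the expansion rate, and indeed in the statement and in \cite{Vacossin} the constant $\delta$ is dynamically determined (via the Jacobian of $F$), not a free parameter. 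If you shrink $\delta$, the tubes never reach the scale at which porosity produces a gain; if you keep $\delta$ at Ehrenfest scale, the naive triangle inequality over words cannot be absorbed, since the FUP exponent $\beta$ may be arbitrarily small for thick trapped sets. The actual proof must therefore exploit genuine almost-orthogonality (word operators of equal length have essentially disjoint right microsupports) and apply the uncertainty principle to porous unions rather than word by word; this bookkeeping is the technical heart of \cite{Vacossin} and is exactly what your sketch defers rather than establishes.

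Two further points. First, the ``microlocal FBI change of variables aligned with the stable/unstable foliations'' is not available as a routine conjugation: in this setting the foliations are only H\"older/Lipschitz, the expansion rates are variable, and the fractal uncertainty principle has to be proved for porous sets carried by curved, merely Lipschitz families; establishing porosity of the one-dimensional models of $\mathcal{T}_\pm$ from total disconnectedness plus hyperbolicity, and an FUP robust under such deformations, is a substantial part of the cited work, not a corollary of Bourgain--Dyatlov as stated. Second, the conclusion must yield exactly the factor $||\alpha_h||_\infty^{N(h)}$ (this uniformity as $||\alpha_h||_\infty\to 1$ is what Proposition \ref{Prop_Estimate_M} needs), so per-step Egorov and quantization losses must be tracked so that they only contribute a bounded multiplicative constant over $N(h)=O(\log(1/h))$ steps; your sketch is compatible with this but does not address it. As written, the proposal is a plausible roadmap to the result of \cite{Vacossin}, not a proof, and it does not engage with the reduction (to $T\op(\alpha)$ with $T$ microlocally unitary, and the $\hinf$ treatment away from $\mathcal{T}$) which is the only argument the present paper itself supplies.
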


\begin{rem}\text{} 
\begin{itemize}
\item Strictly speaking, the result of \cite{Vacossin} applies to operators of the form $T(h)\op(\alpha)$ where $T$ is microlocally unitary near $\mathcal{T}$. We can reduce (\ref{equation_spectral_gap}) to this case. Indeed, locally near every point $\rho_0 \in \mathcal{T}$, $M(h)$ takes this form, and $\mathcal{T}$ is totally disconnected, so that $M(h)$ takes this form in a small neighborhood of $\mathcal{T}$. Finally, as showed in \cite{Vacossin} (Subsection 4.2), the behavior of $M(h)$  outside any neighborhood of $\mathcal{T}$ contributes as a $\hinf$ in (\ref{equation_spectral_gap}), as soon as $N(h)$ is bigger than a fixed $N_0$ depending on this neighborhood. 
\item Note that the constant $\delta$ and $\gamma$ are purely dynamical, that is, depend only on the dynamics of $F$ near $\mathcal{T}$. Indeed, $\delta$ is defined in Section 4.1 in \cite{Vacossin} using only dynamical parameters, such as the Jacobian of $F$. Concerning $\gamma$, it is implicitly defined using the porosity of the trapped set (see Section 6 in \cite{Vacossin}). $h_0$ depends on $\alpha$ (through a finite number of semi-norms). This remark will turn out to be important when dealing with scattering by a potential. 
\end{itemize}
\end{rem}

This estimate, which is the crucial point in \cite{Vacossin} to prove the spectral gap naturally leads to a resolvent bound for $(\Id - M(h))^{-1}$ : 

\begin{prop}\label{Prop_Estimate_M}
Suppose that $M(h)$ satisfies the above assumptions. 
Let $\gamma$ and $\delta$ be given in Proposition \ref{Prop_Crucial_Estimate} and assume that for some $h_1 >0$, for all $0 < h \leq h_1$, 
\begin{equation} 
||\alpha_h||_\infty < \exp\left( \frac{\gamma}{\delta} \right) 
\end{equation}
Let us consider $A \geq 1$ such that  for all $0 < h \leq h_1$,   $|| \alpha_h||_\infty \leq A$. Then, there exists $h_0 \in ]0,h_1]$ such that for all $0 < h \leq h_0$, 
\begin{equation}
\left|  \left| \left( \Id - M(h) \right)^{-1} \right| \right|_{L^2 \to L^2} \leq 
 2 \delta | \log h |  h^{- \delta \log A}
\end{equation}
\end{prop}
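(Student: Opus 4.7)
The plan is to apply a blocked Neumann series argument, grouping terms in packets of length $N(h)$ so that the sharp contraction estimate of Proposition~\ref{Prop_Crucial_Estimate} can be brought to bear. The starting point is the factorization
\begin{equation*}
(\Id - M(h))^{-1} = \Bigl( \sum_{r=0}^{N(h)-1} M(h)^r \Bigr)\bigl(\Id - M(h)^{N(h)}\bigr)^{-1},
\end{equation*}
valid as soon as the geometric series $\sum_{q \ge 0} (M(h)^{N(h)})^q$ converges in operator norm. The work then splits into verifying convergence of this series and bounding the two factors on the right separately.

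For the second factor, Proposition~\ref{Prop_Crucial_Estimate} gives $||M(h)^{N(h)}||_{L^2 \to L^2} \le h^\gamma ||\alpha_h||_\infty^{N(h)}$. Writing $N(h) = \delta |\log h|(1+o(1))$, this becomes $h^{\gamma - \delta \log ||\alpha_h||_\infty}(1+o(1))$; the hypothesis $||\alpha_h||_\infty < e^{\gamma/\delta}$ makes the exponent strictly positive, so for $h \le h_0$ small enough, $||M(h)^{N(h)}|| \le 1/2$ and therefore $||(\Id - M(h)^{N(h)})^{-1}||_{L^2 \to L^2} \le 2$. For the first factor, I would use the $L^2$-boundedness of pseudodifferential operators recalled in the appendix: since $M(h)^* M(h) \in \Psi_{1/4}(Y)$ has principal symbol $\alpha_h^2$, one obtains $||M(h)||_{L^2 \to L^2} \le ||\alpha_h||_\infty + o(1) \le A + o(1)$. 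Summing geometrically and using $A \ge 1$,
\begin{equation*}
\Bigl|\Bigl| \sum_{r=0}^{N(h)-1} M(h)^r \Bigr|\Bigr|_{L^2 \to L^2} \le N(h) (A + o(1))^{N(h)} \le \delta |\log h| \cdot h^{-\delta \log A} (1+o(1)),
\end{equation*}
since $(1+o(1))^{N(h)} \to 1$ as $N(h) = O(|\log h|)$. Multiplying the two estimates yields $||(\Id - M(h))^{-1}||_{L^2 \to L^2} \le 2\delta |\log h| h^{-\delta \log A}$ for all $h \le h_0$, after possibly shrinking $h_0$ to absorb the $(1+o(1))$ factor.

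The main technical obstacle is the compatibility between the $o(1)$ correction in $||M(h)|| \le A + o(1)$ and the $N(h) \sim |\log h|$ iterations: raising an error factor to a power of order $|\log h|$ could in principle spoil the bound. However the correction coming from the symbolic calculus in $\Psi_{1/4}$ is of order $h^\epsilon$ for some fixed $\epsilon > 0$, so that $(1+O(h^\epsilon))^{N(h)} \to 1$ as $h \to 0$, and this issue is harmless. Apart from that, the argument is the standard Neumann series trick, with Proposition~\ref{Prop_Crucial_Estimate} doing the essential work by trading the gain $h^\gamma$ against the potential exponential growth $||\alpha_h||_\infty^{N(h)}$.
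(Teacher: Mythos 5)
Your argument is correct and follows essentially the same route as the paper: the blocked factorization $(\Id-M)^{-1}=\bigl(\sum_{r=0}^{N(h)-1}M^r\bigr)\bigl(\Id-M^{N(h)}\bigr)^{-1}$, Proposition~\ref{Prop_Crucial_Estimate} to make the second factor $O(1)$, and the bound $\|M\|_{L^2\to L^2}\le \|\alpha_h\|_\infty+O(h^\eta)$ for the geometric sum. The one loose point is the final step: your chain produces $2\,\delta|\log h|\,h^{-\delta\log A}(1+o(1))$, and a factor $(1+o(1))\ge 1$ cannot be ``absorbed'' merely by shrinking $h_0$; the paper sidesteps this by splitting $2=\tfrac32\cdot\tfrac43$, bounding the Neumann inverse by $\tfrac32$ (which your same $o(1)$ smallness gives) and letting the geometric sum carry the remaining $\tfrac43$ -- a trivial repair of your constants.
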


\begin{proof}
First recall that $M^*M \in \Psi_{0^+}$ with $\sigma_0 (M^*M) = \alpha_h^2$ and $M = O(h^L)$ microlocally outside $\mathcal{W}$. Hence, we can estimate the operator norm of $M(h)$ (see \cite{ZW}, Theorem 13.13), 
$$||M(h)||_{L^2 \to L^2} \leq ||\alpha_h||_\infty + O(h^\eta)$$
where $\eta$ is any fixed number in $]0,1[$. \\  
Let $N(h) \sim \delta |\log h|$ be the family of integers given by Proposition \ref{Prop_Crucial_Estimate}. Without loss of generality, we may assume that $N(h) \leq \delta | \log h |$. We use the fact that $h^\gamma ||\alpha_h||^{N(h)} = o(1)$ when $h \to 0$ if $||\alpha_h||_\infty < e^{ \frac{\gamma}{\delta}}$. As a consequence, $ \Id - M^{N(h)}$ is invertible for $h$ small enough with 
\begin{equation}\label{eq1_prop2.2}
 || (\Id - M^{N(h)} )^{-1} || \leq \frac{3}{2} \;  , \;  0 \leq h \ll 1
\end{equation}
This implies that $I - M$ is invertible with inverse 
\begin{equation}\label{eq2_prop2.2}
(\Id - M)^{-1} = (\Id + M + \dots + M^{N(h)-1}) (\Id - M^{N(h)})^{-1} 
\end{equation} 
We hence estimate
\begin{align*}
|| \Id + M + \dots + M^{N(h)-1} ||  \leq N(h) (||\alpha_h||_\infty + O(h^\eta)) ^{N(h)}
& \leq N(h) (A + O(h^\eta)))^{N(h)} \\
& \leq  \delta| \log h | h^{-\delta \log A}  (1 + o(1) )\\
& \leq \frac{4}{3} \delta |\log h | h^{-\delta \log A}
\end{align*}
 if $h$ is small enough. Using (\ref{eq2_prop2.2}), we multiply with (\ref{eq1_prop2.2}) and find the required inequality. \\
\end{proof}

\begin{rem}
The constant 2 can be changed into any $1 + \varepsilon$ by changing $h_0$ into $h_0(\varepsilon)$. \\
If  $\liminf_{h \to 0} ||\alpha_h||_\infty >1$ we can get rid of the $\log h$ term by changing it into a constant depending on $||\alpha_h||_\infty$. More precisely, a better estimate of the sum can show that 
$$ \left| \left| \left( \Id - M(h) \right)^{-1} \right| \right|_{L^2 \to L^2} \leq \frac{2}{||\alpha_h||_\infty -1 }h^{-\delta \log ||\alpha_h||_\infty} $$
The main interest of the estimate in Proposition \ref{Prop_Estimate_M} is that it gives a uniform estimate in the limit $||\alpha_h||_\infty \to 1$. 
\end{rem}

\subsection{Proof in the case of obstacle scattering. }

In this subsection, we recall the main ingredients of \cite{NSZ14} and prove the resolvent estimate of Theorem \ref{Thm2} in obstacle scattering.\footnote{We use notations similar to the ones in \cite{NSZ14} but beware that we do not use the exact same conventions. }

Let $\mathcal{O} = \bigcup_{i=1}^J \mathcal{O}_j$ where $\mathcal{O}_j$ are open, strictly convex obstacles in $\R^2$ having smooth boundary and satisfying the \emph{Ikawa's no-eclipse condition} : for $i \neq j \neq k$, $\overline{\mathcal{O}_i}$ does not intersect the convex hull of $ \overline{ \mathcal{O}_j }\cup \overline{\mathcal{O}_k}$. Let $P(h) = -h^2 \Delta_\Omega -1$ and fix a cut-off function $\chi \in \cinfc(\R^2)$ equal to one in a neighborhood of $\overline{\mathcal{O}}$. First note that by a simple scaling argument, it is enough to prove (\ref{eq_thm_2}) for $z \in \{ z \in D(0,Kh), \im z \geq -\gamma h \}$ for any $K>0$ fixed. 

\paragraph{Complex scaling. } We fix $R_\chi >0$ such that $\supp \chi \subset B(0,R_{\chi})$. For a parameter $\theta \in ]0, \pi/2[$, we consider a complex deformation $\Gamma_\theta \subset \C^2$ of $\R^2$ such that for some $R^\prime > R_\chi$, 
\begin{align*}
\Gamma_\theta \cap B_{\C^2}(0, R_\chi) = \R^2 \cap B_{\R^2}(0, R_\chi)  \\
\Gamma_\theta \cap \C^2 \setminus B_{\C^2}(0,R^\prime) = e^{i \theta} \R^2 \cap \C^2 \setminus B_{\C^2}(0,R^\prime) \\
\Gamma_\theta = f_\theta (\R^2) \; ; \; f_\theta : \R^2 \to \C^2 \text{ injective}
\end{align*}

By identifying $\R^2$ and $\Gamma_\theta$ through $f_\theta$, we note $\Delta_\theta$ the corresponding complex-scaled free Laplacian, and $\Delta_{\Omega,\theta}$ the complex scaled Laplacian on $H^2(\Omega) \cap H_0^1 (\Omega)$. We fix $K>0$ (which can be chosen arbitrarily large) and for $z \in D(0,Kh)$, we note 
\begin{equation}  
P_{\bullet}(z) =- h^2 \Delta_\bullet - 1 - z
\end{equation} 
with either $\bullet= \theta$ or $\bullet= \Omega,\theta$. We note the associated resolvent, when they are defined, $$ R_{\Omega,\theta}(z) : L^2 (\Omega) \to H^2(\Omega) \; ; R_{\theta}(z) : L^2(\R^2) \to H^2(\R^2)$$

\begin{rem}
With these notations, the parameter $\lambda$ of the usual resolvent $R(\lambda)$ takes the form $\lambda=\lambda_h(z)= h^{-1}(1+z)^{1/2}$ with $z \in D(0,Kh) \subset D(0,1)$  if $h$ small enough, so that the square root is well defined and gives a holomorphic function of $z$. 
\end{rem}
Thanks to the usual properties of the complex scaling method (see for instance \cite{DyZw}, Section 4.5 in Chapter 4 and the references given there), we have :
\begin{itemize}
\item The operators $P_\theta(z)$ and $P_{\Omega, \theta}(z)$ are Fredholm operators of index 0;
\item $z$ is a pole of $R_{\Omega,\theta}(z)$ if and only if $\lambda_h(z)$ is a scattering resonance ; 
\item For $z$ not a pole of $R_{\Omega,\theta}(z)$, in virtue of the properties of $\chi$ and $\Gamma_\theta$, we have (recall the definitions of $R(\lambda)$ and $R_h(z)$ in (\ref{resolvant}) and (\ref{semiclassical_resolvent}) respectively),  
$$ \chi R_{\Omega,\theta}(z) \chi = \chi R_h(z) \chi = h^{-2} \chi R(\lambda_h(z)) \chi$$
\item Finally, we recall that we have the following standard estimate for $R_\theta(z)$ (see for instance \cite{DyZw}, Theorem 6.10) 
\begin{equation}
||R_{\theta}(z)||_{L^2(\R^2) \to H^2_h(\R^2)} \leq Ch^{-1} \; ; \; z \in D(0,Kh)
\end{equation}
In particular, it tells that $R_{\theta}$ is holomorphic in $D(0,Kh)$. Here, $H_h^2(\R^2)$ is a semiclassical Sobolev space i.e. $H^2(\R^2)$ with the norm $||u||_{H_h^2(\R^2)} = || (1-h^2 \Delta)u||_{L^2}$. 
\end{itemize}

To prove Theorem \ref{Thm2}, it is then enough to give a bound for $\chi R_{\Omega,\theta}(z) \chi: L^2(\Omega) \to L^2(\Omega)$ in the corresponding region. 

\paragraph{Reduction to the boundary of the obstacles.} Following \cite{NSZ14} (Section 6), we introduce the following operators to obtain a reduction to the boundary. For $j= 1, \dots, J$, let 
\begin{equation}
\gamma_j : u \in H^2(\Omega) \mapsto u|_{\partial \mathcal{O}_j} \in  H^{3/2} (\partial \mathcal{O}_j)
\end{equation}
be the (bounded) trace operator and $\gamma u = (\gamma_j u)_j \in H^{3/2} (\partial \mathcal{O}) \coloneqq H^{3/2}(\partial \mathcal{O}_1)\times \dots \times H^{3/2}(\partial \mathcal{O}_J)$,
and let  
\begin{equation} H_j(z) : H^{3/2}(\partial \mathcal{O}_j) \to H^2(\R^2 \setminus \mathcal{O}_j ) \overset{\text{extension by 0} }{\longrightarrow}L^2 (\R^2)
\end{equation}  be the Poisson operator, defined, for $v \in H^{3/2}(\partial \mathcal{O}_j)$, as the solution to the problem 
$$ \left\{ \begin{array}{l}
P_\theta(z) H_j(z) v =0 \text{ in } \R^2 \setminus \overline{\mathcal{O}_j} \\
\gamma_j H_j(z)v = v .
\end{array} \right. $$
$u= H_j(z)v$ is a solution of the problem $P_\theta(z) u = 0$ with outgoing properties. So as $P_\theta(z)$, $H_j(z)$ implicitly depends on $h$. 
For $\overrightarrow{v} =(v_1, \dots, v_J) \in H^{3/2}(\partial \mathcal{O})$, we set 
$$ H(z) \overrightarrow{v} = \sum_{j=1}^J H_j(z) v_j $$
 Let us define the following operator-valued matrix $\mathcal{M}(z) : H^{3/2} (\partial \mathcal{O}) \to H^{3/2} (\partial \mathcal{O}) $ by the relation 
\begin{equation}
\Id - \mathcal{M}(z) = \gamma H(z) 
\end{equation}

We state a few facts concerning these operators. In the following lemma, we give estimates involving the semiclassical version of the Sobolev spaces $H^2(\R^2 \setminus \overline{\mathcal{O}_j})$ and $H^{3/2}(\partial{O}_j)$, denoted $H^2_h(\R^2 \setminus \overline{\mathcal{O}_j})$ and $H^{3/2}_h(\partial{O}_j)$ respectively. 

\begin{lem}\label{Lemma_gamma_j}
For $j =1, \dots, J$, there exists $C>0$ such that for all $0<h\leq 1$, the norm of the bounded operator $\gamma_j$ from $H_h^{2}(\R^2 \setminus \overline{\mathcal{O}_j}) $ to $H^{3/2}_h (\partial \mathcal{O}_j)$ satisfies 
$$ ||\gamma_j||_{H_h^{2}(\R^2 \setminus \overline{\mathcal{O}_j}) \to H^{3/2}_h (\partial \mathcal{O}_j)} \leq C h^{-1/2} $$
\end{lem}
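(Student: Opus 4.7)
The plan is to localize near $\partial \mathcal{O}_j$, straighten the boundary via an $h$-independent diffeomorphism, and reduce the claim to a direct Fourier computation on the half-plane. Concretely, I would fix a finite cover of a tubular neighborhood of $\partial \mathcal{O}_j$ by coordinate charts $\psi_k : V_k \to B_k \subset \R^2$ sending $V_k \cap (\R^2 \setminus \overline{\mathcal{O}_j})$ to $\{y_1 > 0\}$ and $V_k \cap \partial \mathcal{O}_j$ to $\{y_1 = 0\}$, together with a subordinate partition of unity and a cut-off $\chi$ equal to one near $\partial \mathcal{O}_j$. Because $\gamma_j u = \gamma_j(\chi u)$ and because smooth compactly supported multipliers and $h$-independent diffeomorphisms act boundedly on $H^s_h$ with $h$-uniform constants (standard semiclassical symbol calculus), the lemma reduces to the model statement: for all $v \in H^2(\R^2_+)$ compactly supported in a fixed ball,
\[ \|v(0,\cdot)\|_{H^{3/2}_h(\R)} \leq C h^{-1/2} \|v\|_{H^2_h(\R^2_+)}. \]

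For the model statement, I would extend $v$ via a reflection-type operator to $\tilde v \in H^2(\R^2)$ with $\|\tilde v\|_{H^2_h(\R^2)} \leq C' \|v\|_{H^2_h(\R^2_+)}$; as the extension involves no $h$, this bound is uniform. Writing $\widehat{\tilde v(0,\cdot)}(\xi_2) = (2\pi)^{-1} \int_\R \hat{\tilde v}(\xi_1,\xi_2)\, d\xi_1$, Cauchy--Schwarz against the weight $(1 + h^2(\xi_1^2 + \xi_2^2))^2$ yields
\[ |\widehat{\tilde v(0,\cdot)}(\xi_2)|^2 \leq I_h(\xi_2) \int_\R (1+h^2(\xi_1^2+\xi_2^2))^2 |\hat{\tilde v}(\xi_1,\xi_2)|^2\, d\xi_1, \]
where $I_h(\xi_2) := \int_\R (1+h^2(\xi_1^2+\xi_2^2))^{-2}\, d\xi_1 = \tfrac{\pi}{2}\, h^{-1}(1+h^2\xi_2^2)^{-3/2}$ after the substitution $\eta = h\xi_1$. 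Multiplying by $(1+h^2\xi_2^2)^{3/2}$ and integrating in $\xi_2$ yields the desired inequality with constant $\sqrt{\pi/2}$.

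Apart from this elementary Fourier computation, the only point to verify is the $h$-uniform invariance of the semiclassical Sobolev norms under the chart maps, the partition of unity, and the reflection extension; since each operation is smooth and $h$-independent, this is routine and does not constitute a real obstacle. The actual content of the lemma is the semiclassical scaling factor $h^{-1/2}$, which encodes the half-derivative loss of the classical trace theorem transcribed into the semiclassical scale.
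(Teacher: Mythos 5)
Your proposal is correct and follows essentially the same route as the paper: localization to a straightened boundary, a reflection extension bounded uniformly in $h$, and a Cauchy--Schwarz estimate on the Fourier side producing the $h^{-1/2}$ factor. The only cosmetic differences are that you work with the ordinary Fourier transform and $h$-dependent weights (computing the inner integral exactly) where the paper uses the semiclassical Fourier transform and bounds the corresponding integral; these are equivalent up to rescaling.
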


\begin{proof}Using a partition of unity argument and local charts, it is sufficient to prove that the above result holds with $\R^2 \setminus \overline{\mathcal{O}}_j$ replaced by $\R \times \R^*_+$ and $\partial \mathcal{O}_j$ replaced by $\R$. In this setting, we note $\gamma$ the associated trace operator. 
First, we extend an element $u \in H^2_h(\R \times \R^*_+)$ to an element $\tilde{u} \in H^2_h(\R \times \R)$ such that $||\tilde{u}||_{H_h^2} \leq C ||u||_{H^2_h}$ (see for instance \cite{Evans}, Chapter 5, Section 4 : in the proof of Theorem 1, one can extend $u \in H_h^{2}(\R \times \R_+^*)$ with the formula : for $y>0$, $u(x,-y)=-3u(x,y)+4 u(x,-y/2)$). Then we observe that, with $\mathcal{F}^1_h$ (resp. $\mathcal{F}_h$) the semiclassical unitary Fourier transform in 1D (resp. 2D), 
$$ \mathcal{F}^1_h (\gamma u)(\xi)  = \frac{1}{(2\pi h)^{1/2}} \int_\R \mathcal{F}_h( \tilde{u})(\xi, \eta) d \eta $$
From, this we get 
\begin{equation}\label{inequality_trace_CS}
||\gamma u ||_{H_h^{3/2}} \leq Ch^{-1/2} ||\tilde{u}||_{H_h^2}
\end{equation}
Indeed, by Cauchy-Schwarz, we have 
\begin{align*}
\left| \int_\R \mathcal{F}_h( \tilde{u})(\xi, \eta) d \eta \right|^2 &\leq \left( \int_\R |\mathcal{F}_h( \tilde{u})(\xi, \eta)|^2 (1 + \xi^2 + \eta^2)^{2} d \eta \right) \left( \int_\R  (1 + \xi^2 + \eta^2)^{-2}d \eta \right)  \\
& \leq \left( \int_\R |\mathcal{F}_h( \tilde{u})(\xi, \eta)|^2 (1 + \xi^2 + \eta^2)^{2} d \eta \right) (1+ \xi^2)^{-3/2} \int_\R (1+ \eta^2)^{-2} d \eta 
\end{align*}
We find (\ref{inequality_trace_CS}) by multiplying by $(1+ \xi^2)^{3/2}$ and integrating over $\xi$. 
This concludes the proof. 
\end{proof}

\begin{lem}\label{Lemma_H_j}
For $j =1, \dots, J$, for any $K>0$, there exists $h_0 >0$ such that for all $0 < h \leq h_0$, $\chi H_j(z)$ is holomorphic in $D(0,Kh)$ and satisfies for some $C>0$ independent of $h$, and for $z \in D(0,Kh)$, 
$$ ||\chi  H_j(z)  ||_{ H^{3/2}_h(\partial \mathcal{O}_j) \to L^2(\R^2 \setminus \overline{\mathcal{O}_j} )}  \leq Ch^{-1/2}$$
\end{lem}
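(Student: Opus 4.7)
The plan is to reduce the bound on $\chi H_j(z)$ to the exterior Dirichlet resolvent of the single strictly convex obstacle $\mathcal{O}_j$, whose exterior is non-trapping. The two ingredients are a semiclassical right inverse of the trace $\gamma_j$ and the classical non-trapping resolvent estimate for a single convex obstacle.

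First, I would construct a semiclassical lifting of the trace: given $v\in H^{3/2}_h(\partial\mathcal{O}_j)$, produce an extension $\tilde v\in H^2_h(\R^2\setminus\overline{\mathcal{O}_j})$, compactly supported in a fixed neighborhood of $\partial\mathcal{O}_j$ contained in $\{\chi\equiv 1\}$, with $\gamma_j\tilde v = v$ and
\begin{equation*}
\|\tilde v\|_{H^2_h(\R^2\setminus\overline{\mathcal{O}_j})}\le Ch^{1/2}\|v\|_{H^{3/2}_h(\partial\mathcal{O}_j)}.
\end{equation*}
This is the natural semiclassical dual of Lemma~\ref{Lemma_gamma_j}: in local tubular coordinates $(x,y)$ along $\partial\mathcal{O}_j$ with $y>0$ the exterior normal, the boundary layer $\tilde v(x,y) = (\mathcal{F}^1_h)^{-1}\bigl(e^{-(1+\xi^2)^{1/2}y/h}\mathcal{F}^1_h v(\xi)\bigr)(x)$, truncated by a fixed cutoff in $y$, gives the estimate by a direct Fourier computation paralleling the one in Lemma~\ref{Lemma_gamma_j}; one then assembles a global extension via a partition of unity along $\partial\mathcal{O}_j$.

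Next I would use the standard boundary decomposition
\begin{equation*}
H_j(z)v = \tilde v - R_{\mathcal{O}_j,\theta}(z)\,P_\theta(z)\tilde v,
\end{equation*}
where $R_{\mathcal{O}_j,\theta}(z)$ denotes the complex-scaled exterior Dirichlet resolvent of the single obstacle $\mathcal{O}_j$: indeed $\tilde v - H_j(z)v$ has zero trace on $\partial\mathcal{O}_j$ and satisfies $P_\theta(z)(\tilde v - H_j(z)v) = P_\theta(z)\tilde v$. Since $\mathcal{O}_j$ is a single strictly convex obstacle, its exterior is non-trapping, so the classical Morawetz--Ralston--Melrose--Sj\"ostrand estimate gives, for any cutoff $\chi_1\in\cinfc(\R^2)$ equal to $1$ on the support of $\tilde v$,
\begin{equation*}
\|\chi R_{\mathcal{O}_j,\theta}(z)\chi_1\|_{L^2\to L^2}\le Ch^{-1},\qquad z\in D(0,Kh),
\end{equation*}
with $R_{\mathcal{O}_j,\theta}(z)$ holomorphic in $D(0,Kh)$ for $h\le h_0$. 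Since $\tilde v$ is compactly supported and $\|P_\theta(z)\tilde v\|_{L^2}\le C\|\tilde v\|_{H^2_h}\le Ch^{1/2}\|v\|_{H^{3/2}_h}$, combining the two bounds gives
\begin{equation*}
\|\chi H_j(z)v\|_{L^2}\le \|\tilde v\|_{L^2} + Ch^{-1}\|P_\theta(z)\tilde v\|_{L^2}\le Ch^{-1/2}\|v\|_{H^{3/2}_h},
\end{equation*}
and holomorphy of $\chi H_j(z)$ in $D(0,Kh)$ follows from that of $R_{\mathcal{O}_j,\theta}(z)$ together with the affine $z$-dependence of $P_\theta(z)\tilde v$.

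The only nontrivial step is obtaining the $h^{1/2}$ gain in the extension estimate, which reflects the fact that the optimal extension of a high-frequency trace is a boundary layer of width $\sim h$; combined with the $O(h^{-1})$ non-trapping bound for the exterior resolvent of the single obstacle, it produces exactly the $Ch^{-1/2}$ loss claimed for $H_j(z)$. This per-obstacle reduction is precisely the strategy carried out in \cite{NSZ14}, Section 6.
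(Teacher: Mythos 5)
Your proposal is correct and follows essentially the same route as the paper: an $O(h^{1/2})$ semiclassical extension of the trace supported near $\partial\mathcal{O}_j$, the identity $H_j(z)v=\tilde v-R_{j,\theta}(z)P_\theta(z)\tilde v$, and the $O(h^{-1})$ non-trapping bound for the complex-scaled exterior Dirichlet resolvent of the single convex obstacle. The only difference is cosmetic: you build the extension as a Poisson-type boundary layer $e^{-\langle\xi\rangle y/h}$, while the paper uses a cutoff of the form $\chi\bigl(\tfrac{x}{h}\langle hD_y\rangle\bigr)$ (or the extension operator of \cite{NSZ14}), both yielding the same $h^{1/2}$ gain.
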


\begin{proof}
We follow the main lines of the proof of Lemma 6.1 in \cite{NSZ14}. \\
First, let us introduce an extension operator $T_j^h : H_h^{3/2}(\partial \mathcal{O}_j) \to H_h^2 (\R^2)$ such that for $v \in H_h^{3/2}(\partial \mathcal{O}_j)$, $T_j^h v$ is supported in a small neighborhood of $\partial \mathcal{O}_j$ and $$T_j^h = O(h^{1/2}) :H_h^{3/2}(\partial \mathcal{O}_j) \to H_h^2 (\R^2)$$
This is possible, for instance by taking the extension operator given in the proof of Lemma 6.1 in \cite{NSZ14}. Another approach consists in using a partition of unity and local charts to replace $\partial \mathcal{O}_j$ by $\R$, as in the proof of Lemma \ref{Lemma_gamma_j}. Then, one can consider the following operator 
$$ T_h : v \in H_h^{3/2} (\R) \mapsto T_h v \in H_h^{2}(\R^2) \; ; \; T_hv (x,y) = \left( \chi \left( \frac{x}{h} \langle h D_y \rangle \right) v \right)(y)$$
where $\chi \in \cinfc(\R)$, $\chi(0)=1$. Then, $T_h v(0,y) = v$ and one has 
$$ ||T_h v||_{H_h^{2}(\R^2)} \leq C h^{1/2} ||v||_{H_h^{3/2}(\R)}$$
Indeed, one has 
$$ \mathcal{F}_h (T_h v) (\xi, \eta) =h^{1/2} \langle \eta \rangle^{-1} \mathcal{F}_{1}^1 (\chi) \left(\frac{\xi}{\langle \eta \rangle} \right) \times  \mathcal{F}_h^1 (v) (\eta) $$ 
and hence 
\begin{align*}
||T_h v ||_{H_h^{2}(\R^2)}^2  &= \int_{\R^2} |\mathcal{F}_h (T_h v) (\xi, \eta) |^2(1+ \xi^2 + \eta^2)^2 d \xi d \eta  \\
& \leq h \int \langle \eta \rangle^{-2} \left| \mathcal{F}_{1}^1 (\chi) \left(\frac{\xi}{\langle \eta \rangle} \right)\right|^2  \left| \mathcal{F}_h^1 (v) (\eta) \right|^2 (1+\xi^2)^2 (1+ \eta^2)^2 d \eta d\xi \\
&\leq h \left( \int_\R | \mathcal{F}_1^1 \chi (\xi)|^2 (1+ \xi^2)^2 d \xi \right) \left( \int_\R  |\mathcal{F}_h^1 (\eta)|^2 (1+ \eta^2)^{3/2} d \eta \right) \\
& \leq C h ||v||^2_{H_h^{3/2}(\R)}
\end{align*}
We then assume that for all $v \in H_h^{3/2}(\partial \mathcal{O}_j)$, $\supp (T_j^h v) \subset \supp \chi$. Then, we claim that
$$ H_j(z) = \mathds{1}_{\R^2 \setminus \overline{\mathcal{O}_j}} T_j^h - R_{j, \theta}(z)  \mathds{1}_{\R^2 \setminus \overline{\mathcal{O}_j}} P_\theta(z)T_j^h $$
where $R_{j,\theta}(z)$ is the resolvent of the complex scaled Dirichlet realization of $-h^2 \Delta - 1$ on $\R^2 \setminus \overline{\mathcal{O}_j}$. Indeed, the boundary condition on $\partial \mathcal{O}_j$ is satisfied since $\text{Ran}( R_{j,\theta}) \subset H_0^1(\R^2 \setminus \overline{\mathcal{O}_j})$, and by definition, $P_\theta R_{j,\theta} w = w $ in $\R^2 \setminus  \overline{\mathcal{O}_j}$, for $w \in L^2(\R^2 \setminus  \overline{\mathcal{O}_j})$. \\
As a consequence, is suffices to show that  $\chi R_{j,\theta}(z) \chi$ is holomorphic in $D(0,Kh)$ with the bound 
$$\chi R_{j,\theta}(z) \chi = O(h^{-1}) : L^2(\R^2 \setminus \overline{\mathcal{O}_j}) \to L^2 (\R^2 \setminus \overline{\mathcal{O}_j})$$
This is a rather standard non-trapping estimates (here, when there is a single obstacle, the billiard flow is non-trapping). As explained in \cite{NSZ14} in the proof of Lemma 6.1, such an estimate relies on propagation of singularities concerning the wave propagator : one can check that an abstract non-trapping condition for black box Hamiltonian is satisfied (see for instance \cite{DyZw}, Definition 4.42). This implies that the required statement (\cite{DyZw}, Theorem 4.43) holds. 
\end{proof}

Finally, we recall the crucial relation between $R_{\Omega,\theta}(z)$ and $\mathcal{M}(z)$ (see \cite{NSZ14}, formula 6.11 and the references given there). Assume that $z \in D(0,Kh)$ and that $\Id- \mathcal{M}(z)$ is invertible. Then, so is $R_{\Omega,\theta}(z)$ and we have

\begin{equation}\label{Key_formula_from_M_to_R}
R_{\Omega,\theta}(z) = \mathds{1}_\Omega R_{\theta}(z) - \mathds{1}_\Omega H(z) (\Id - \mathcal{M}(z) )^{-1} \gamma R_{\theta}(z)
\end{equation}
In particular, we see that if we have a bound for $$||(\Id- \mathcal{M}(z))^{-1}||_{H_h^{3/2} \to H_h^{3/2}}$$
 we find a resolvent bound for $R_{\Omega,\theta}(z)$. In fact, as explained in \cite{NSZ14}, it is sufficient to work on $L^2(\partial\mathcal{O})$ in virtue of the following result : 

\begin{lem}(\cite{NSZ14}, Lemma 6.5)
For $j = 1, \dots, J$, let $B^* \partial\mathcal{O}_j \coloneqq \{ (y,\eta), y \in \partial\mathcal{O}_j, |\eta| \leq 1 \}$ and consider $\chi_j \in \cinfc(T^* \partial\mathcal{O}_j)$ such that $\chi_j = 1$ near $B^*\partial\mathcal{O}_j$. Then, by denoting $\op(\chi_j)$ a quantization of $\chi_j$ and by $D$ the diagonal operator-valued matrix $\text{Diag}(\op(\chi_1) , \dots, \op(\chi_J))$, we have 
\begin{align*}
(\Id - D) (\Id - \mathcal{M}(z) ) = \hinf_{ L^2(\partial\mathcal{O}) \to \cinf(\partial\mathcal{O})} \\
(\Id - \mathcal{M}(z) )(\Id - D )= \hinf_{ L^2(\partial\mathcal{O}) \to \cinf(\partial\mathcal{O})}
\end{align*}
\end{lem}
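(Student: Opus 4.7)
The lemma captures the microlocal principle that $\Id - \mathcal{M}(z) = \gamma H(z)$ has all of its nontrivial content, on both the input and the output sides, confined modulo $\hinf$ to the co-ball bundle $B^*\partial\mathcal{O} = \bigsqcup_i B^*\partial\mathcal{O}_i$, which is the natural phase space of the billiard map. Once established, this is what allows the reduction of $\mathcal{M}(z)$ to an open quantum hyperbolic map in the sense of Definition~\ref{def_FIO}, and hence the application of Proposition~\ref{Prop_Crucial_Estimate}. Under the convention that the trace of the extension-by-zero of $H_j(z) v_j$ on $\partial\mathcal{O}_j$ is taken from inside $\mathcal{O}_j$ (hence vanishes), the diagonal blocks of $\Id - \mathcal{M}(z)$ are identically zero, so the two identities reduce to output and input microlocalizations of the off-diagonal blocks $\gamma_i H_j(z)$ for $i \neq j$.

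For the first identity $(\Id - D)(\Id - \mathcal{M}(z)) = \hinf$, the plan is to invoke propagation of singularities for the outgoing function $u = H_j(z) v_j$, which satisfies $P_\theta(z) u = 0$ in $\R^2 \setminus \overline{\mathcal{O}_j}$. Since the principal symbol of $P_\theta(z)$ is $|\xi|^2 - 1$ for $z$ small, $\WF(u)$ is contained on any compact set in the characteristic variety $\{|\xi|^2 = 1\}$. Restricted to $\partial\mathcal{O}_i$ and projected to $T^*\partial\mathcal{O}_i$, the tangential covector $\eta$ satisfies $|\eta|^2 = |\xi|^2 - \xi_\nu^2 \leq 1$, so $\WF(\gamma_i u) \subset B^*\partial\mathcal{O}_i$. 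Since $\chi_i = 1$ near this closed ball, composition with $\Id - \op(\chi_i)$ produces an $\hinf$ operator from $L^2(\partial\mathcal{O})$ to $\cinf(\partial\mathcal{O}_i)$, uniformly in $v_j$.

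For the second identity $(\Id - \mathcal{M}(z))(\Id - D) = \hinf$, the plan is to construct a semiclassical parametrix for $H_j(z)$ on inputs microlocalized in the elliptic region $\{|\eta|^2 > 1\}$ of $T^*\partial\mathcal{O}_j$, showing that such inputs produce evanescent outputs. Working in Fermi coordinates $(r,s)$ near $\partial\mathcal{O}_j$ with $r$ the signed normal distance, $P_\theta(z)$ takes the form $-h^2 \partial_r^2 + \alpha(r,s)(hD_s)^2 - 1 - z$ modulo lower-order terms, with $\alpha(0,\cdot) \equiv 1$. A WKB ansatz in the normal variable $r$, with $\eta$ treated as a parameter, then yields a parametrix of the form $H_j(z) w \sim e^{-\lambda(s,\eta) r / h} w(s)$ on $r > 0$, with $\re \lambda(s, \eta) > 0$ on any closed subset of the elliptic region. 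Consequently $H_j(z)(\Id - \op(\chi_j))$ is $\hinf$ at any positive distance from $\partial\mathcal{O}_j$, and composing with the trace $\gamma_i$ for $i \neq j$ (whose target boundary sits at positive distance from $\partial\mathcal{O}_j$ by the no-eclipse hypothesis) gives the claimed $\hinf$ bound.

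The main obstacle will be the rigorous construction of the elliptic parametrix in the second step: one must solve the boundary value problem microlocally up to $\hinf$ uniformly on $\{|\eta|^2 \geq 1 + \varepsilon\}$, accommodate the complex-scaled, hence non-selfadjoint, operator $P_\theta(z)$, track the dependence on the spectral parameter $z \in D(0, Kh)$, and establish a uniform positive lower bound on $\re \lambda(s,\eta)$ that beats any polynomial loss in $h$. The glancing set $\{|\eta| = 1\}$ is precisely where the elliptic and hyperbolic regimes meet, and the requirement that $\chi_j = 1$ on a neighborhood of the closed ball $B^*\partial\mathcal{O}_j$ ensures that $\Id - \op(\chi_j)$ is microlocally supported strictly inside the elliptic region, which is where the parametrix is uniformly valid.
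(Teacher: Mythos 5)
The paper does not actually prove this lemma — it is quoted from \cite{NSZ14} (Lemma 6.5) without proof — so there is no in-paper argument to compare against. Your two-regime strategy for the blocks $\gamma_i H_j(z)$, $i \neq j$ (a semiclassical wavefront-set/propagation argument placing the output of the outgoing Poisson operator inside the characteristic variety $\{|\xi|=1\}$, hence its trace inside $B^*\partial\mathcal{O}_i$; and an evanescent WKB parametrix in the normal variable for inputs microsupported in the elliptic region $\{|\eta|>1\}$) is exactly the strategy of \cite{NSZ14} and of the underlying parametrix constructions of \cite{Ge88}. The technical caveats you flag — upgrading the wavefront statements to uniform operator-norm bounds, uniformity of the elliptic parametrix as $|\eta|\to\infty$ and in $z\in D(0,Kh)$ for the complex-scaled operator — are genuine but standard, and you have correctly identified where the work lies.

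The one real problem is your treatment of the diagonal blocks. The paper's definition of $H_j(z)$ includes the boundary condition $\gamma_j H_j(z)v=v$, and the trace entering $\gamma H(z)$ is taken from the $\Omega$-side (the only side relevant to the exterior problem), so $(\Id-\mathcal{M}(z))_{jj}=\gamma_j H_j(z)=\Id$ and $\mathcal{M}(z)$ has \emph{vanishing} diagonal. Your proposed convention (trace from inside $\mathcal{O}_j$, forcing $(\Id-\mathcal{M}(z))_{jj}=0$) contradicts this definition and would also break the identity $(\Id-\mathcal{M}(z))(\Id+\mathcal{M}^-(z))=\Id-\mathcal{M}^+(z)+\hinf$ used immediately after the lemma, since the left-hand side could then not produce the $\Id$ on the diagonal. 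With the paper's convention the displayed identities are not literally correct as transcribed: the $(j,j)$ block of $(\Id-D)(\Id-\mathcal{M}(z))$ is $\Id-\op(\chi_j)$, which is certainly not $\hinf_{L^2\to\cinf}$. The statement must be read, as in \cite{NSZ14}, as $(\Id-D)\,\mathcal{M}(z)=\hinf$ and $\mathcal{M}(z)(\Id-D)=\hinf$, i.e.\ as a microlocalization statement about the off-diagonal blocks $-\gamma_i H_j(z)$, $i\neq j$, only. Once restated this way, no change of convention is needed and your argument for the off-diagonal blocks applies as written.
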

\vspace*{0.5cm}
As a consequence of this lemma, $\Id- \mathcal{M}(z)$ extends to an operator $L^2(\partial\mathcal{O}) \to L^2(\partial\mathcal{O})$ and as soon as $\Id-\mathcal{M}(z)$ is invertible and $z \in D(0,Kh)$
$$ ||(\Id -\mathcal{M}(z))^{-1} ||_{H^{3/2}_h \to H^{3/2}_h} \leq C_1  ||(\Id -\mathcal{M}(z))^{-1} ||_{L^2 \to L^2}$$
(with a constant $C_1$ independent of $z$). 

\paragraph{Microlocal properties of $\mathcal{M}(z)$ and reduction to a simpler problem. }
We recall the main microlocal properties of $\mathcal{M}(z)$ and reduce the invertibility of $\Id - \mathcal{M}(z)$ to a nicer Fourier integral operator, as explained in \cite{NSZ14} (Section 6). To do so, let us introduce the following notations. 

For $j \in \{1, \dots, J \}$, let $B^* \partial \mathcal{O}_j$ be the co-ball bundle of $ \partial \mathcal{O}_j$, $S^*_{ \partial \mathcal{O}_j} $ be the restriction of $S^* \Omega$ to $ \partial \mathcal{O}_j$, $\pi_j : S^*_{ \partial \mathcal{O}_j} \to B^* \partial \mathcal{O}_j$ the natural projection and $\nu_j(x)$ be the outward normal vector at $x \in  \partial \mathcal{O}_j$ (see Figure \ref{figure_billiard_map_a}).

\begin{figure}
\begin{subfigure}{0.4 \textwidth}
\centering
\includegraphics[scale=0.3]{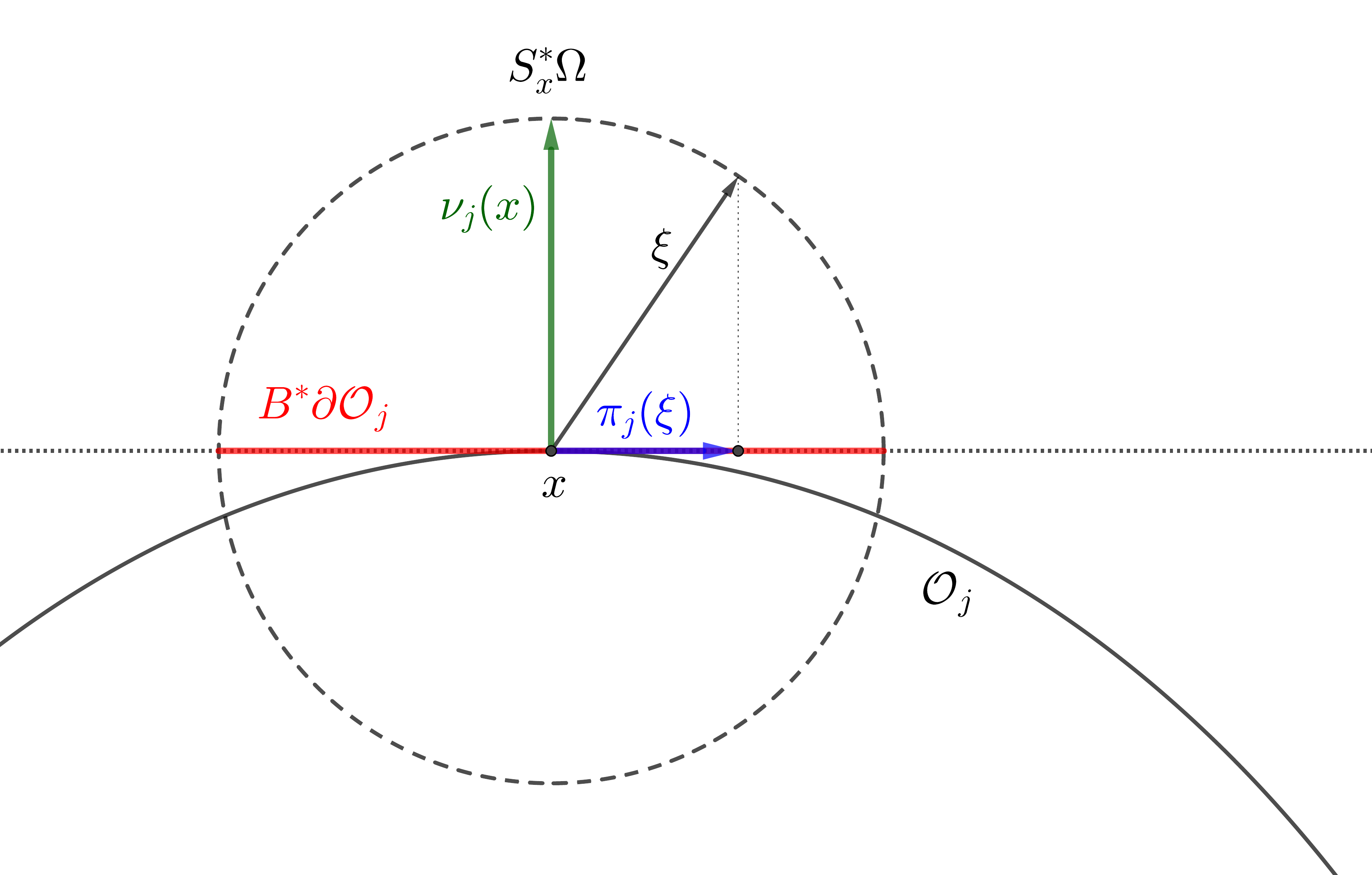}
\caption{The notations used to define the billiard map and the shadow map. }
\label{figure_billiard_map_a}
\end{subfigure} \hspace{0.3cm}\vline  \hspace{0.3cm}
\begin{subfigure}{0.4 \textwidth} 
\centering
\includegraphics[scale=0.33]{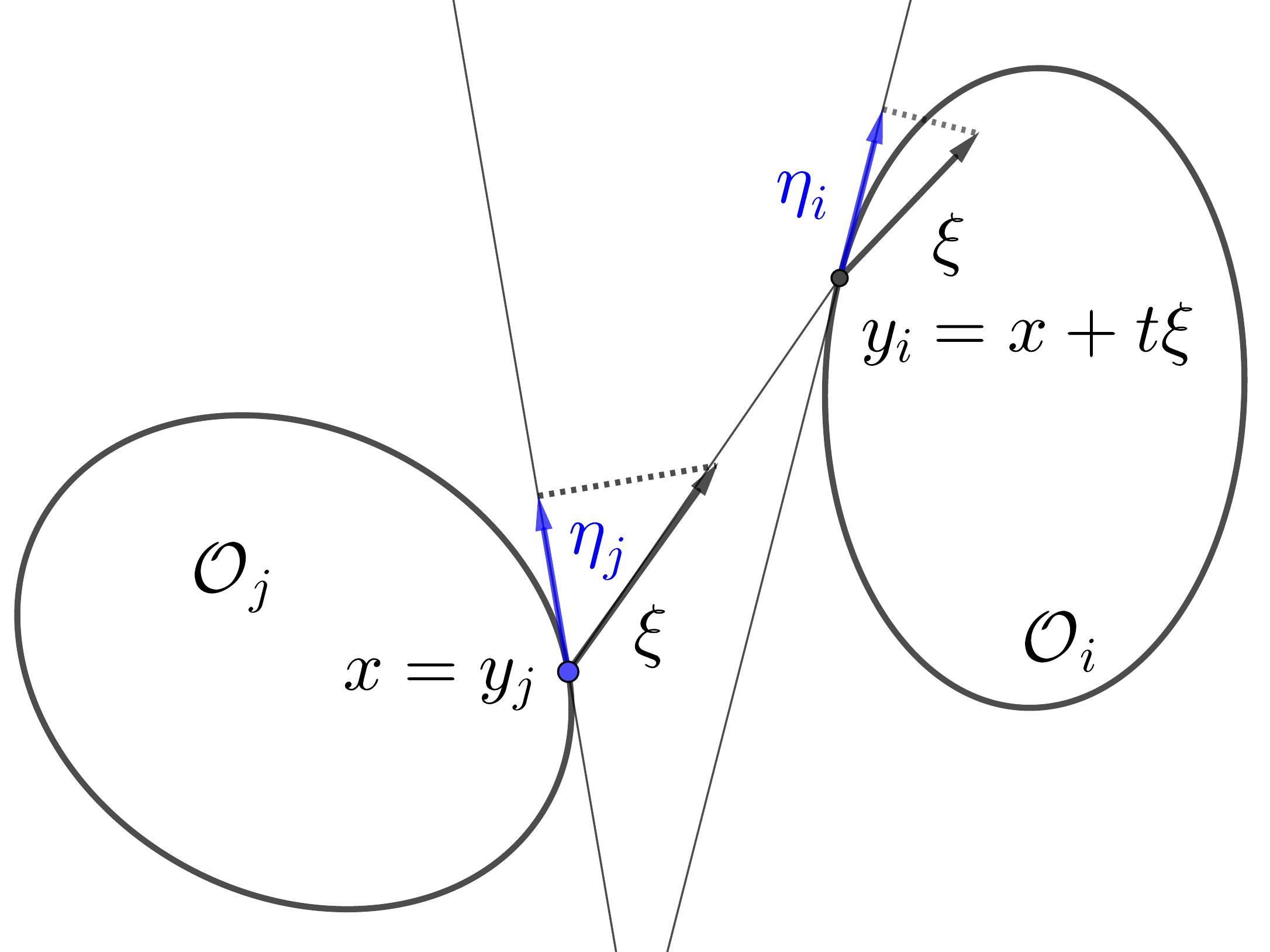}
\caption{The billiard map. $B^+_{ij}(y_j,\eta_j) = (y_i,\eta_i)$.  }
\label{figure_billiard_map_b}
\end{subfigure}
\hspace*{0.4cm}
\begin{subfigure}{0.5 \textwidth}
\centering
\includegraphics[scale=0.3]{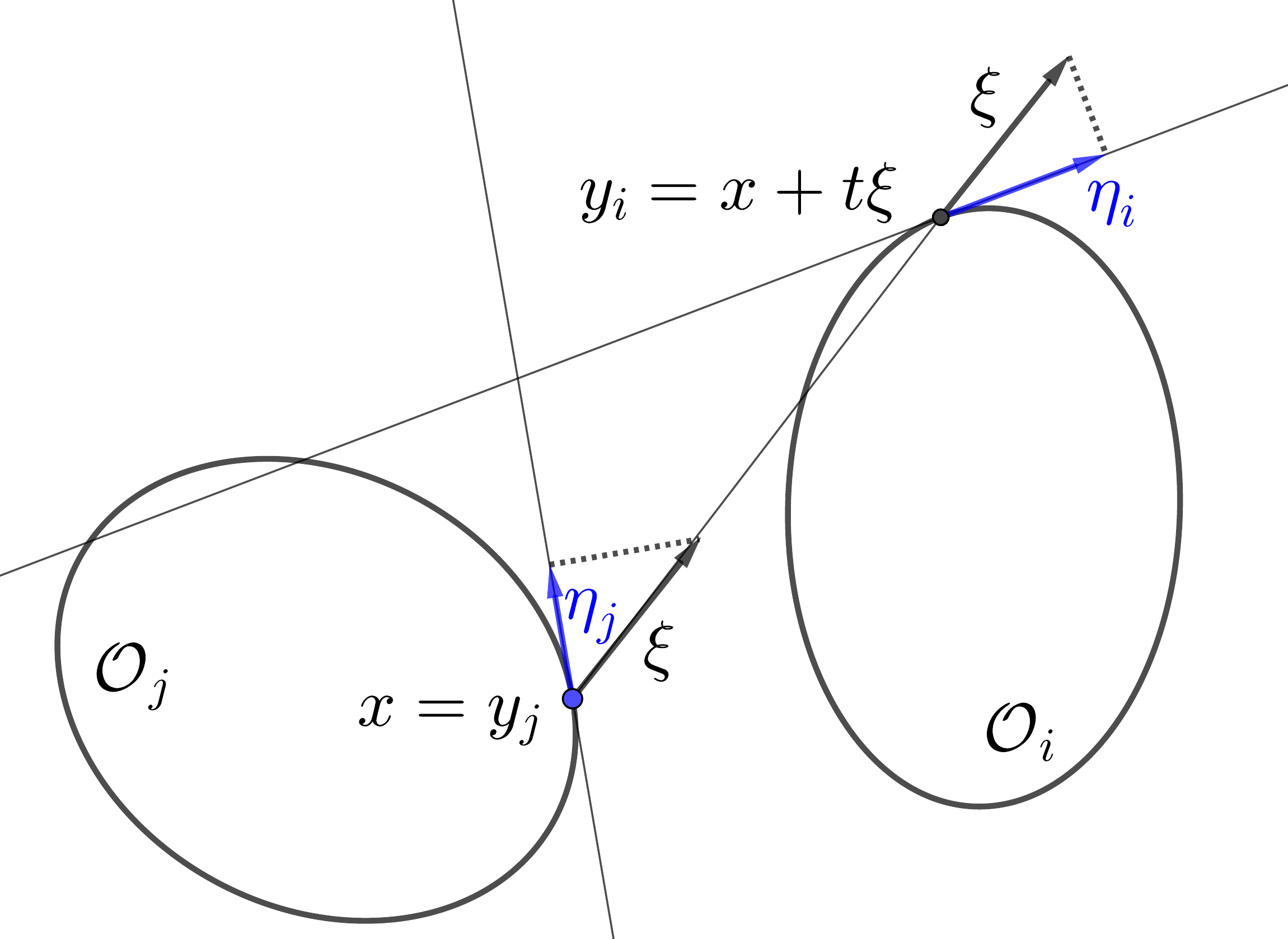}
\caption{The shadow map. $B^-_{ij}(y_j,\eta_j) = (y_i,\eta_i)$.}
\label{figure_billiard_map_c}
\end{subfigure} \hspace*{0.3cm}\vline  \hspace*{0.3cm}
\begin{subfigure}{0.4 \textwidth}
\centering
\includegraphics[scale=0.3]{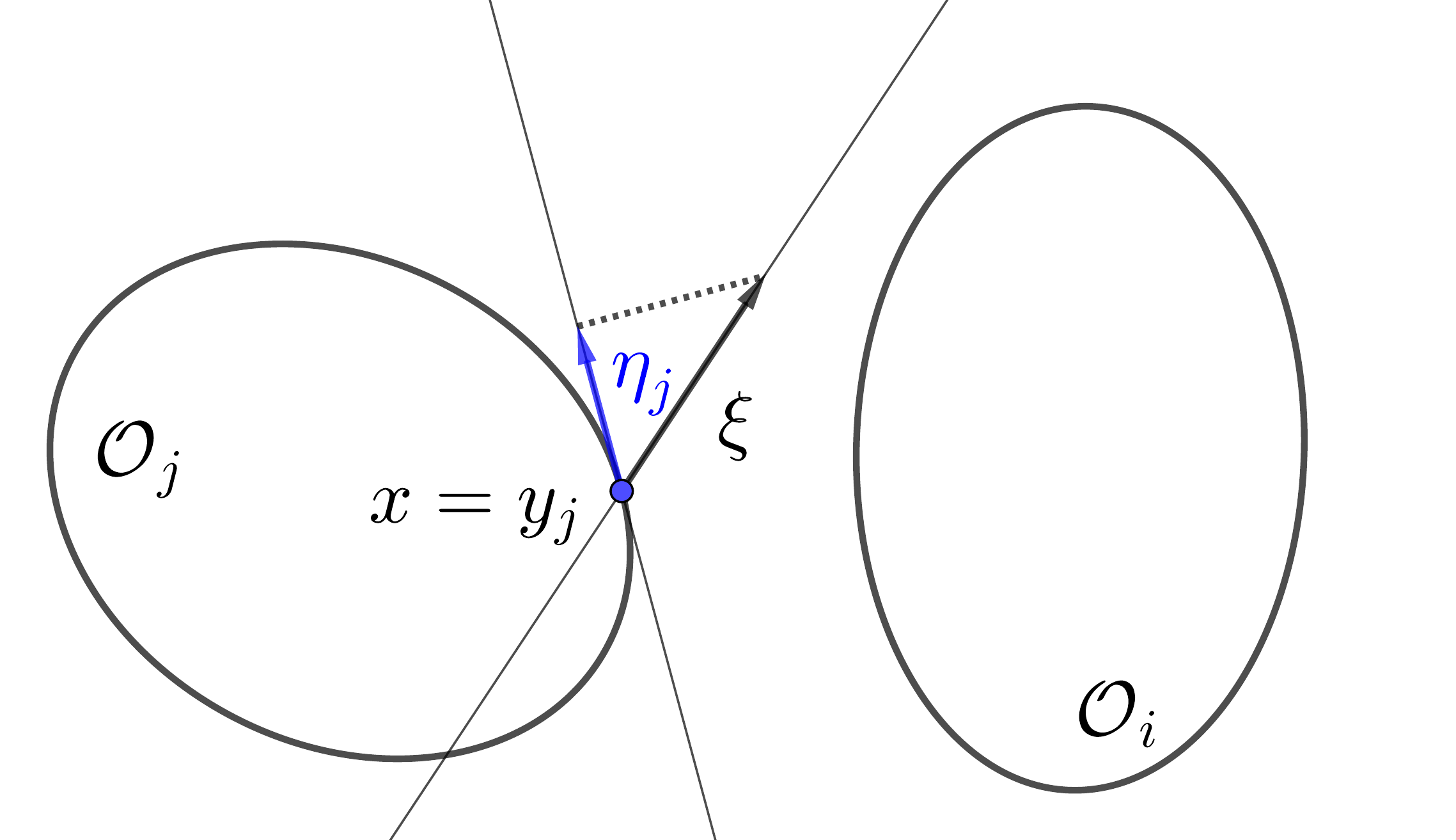}
\caption{These maps are open. In this figure, the point $(y_j,\eta_j)$ has no image.  }
\label{figure_billiard_map_d}
\end{subfigure}
\caption{Description of billiard map and the shadow map. }
\label{figure_billiard_map}
\end{figure}

 For $i \neq j$, let $\mathcal{B}_{ij}^\pm : B^* \partial \mathcal{O}_j  \to B^* \partial \mathcal{O}_i $ be the symplectic open maps defined by 
\begin{gather}\label{billiard_map_definition}
 \rho= \mathcal{B}^\pm_{ij}(\rho^\prime) \iff 
 \exists t >0 \, , \, \exists \xi \in \mathbb{S}^1 \, , \, \exists x \in  \partial \mathcal{O}_j \\
x + t\xi \in  \partial \mathcal{O}_i \, , \, \langle \nu_j(x),\xi \rangle >0 \, , \,  \pm \langle \nu_i(x+t\xi) \, , \,\xi\rangle <0, \\
\pi_j(x,\xi) = \rho^\prime \, , \, \pi_i (x+t\xi, \xi) = \rho
\end{gather}
$\mathcal{B}^+_{ij}$ is the billiard map, whereas $\mathcal{B}^-_{ij}$ is a shadow map (see Figure \ref{figure_billiard_map_b} and \ref{figure_billiard_map_c}).These maps are open. (see Figure \ref{figure_billiard_map_d}). Note that due to our definition of these maps, the glancing rays (that is the rays associated with a point $\rho =(x,\xi) \in B^*\partial \mathcal{O}$ with $|\eta|=1$) are not in the set of definition of $\mathcal{B}^\pm_{ij}$. Moreover, due to Ikawa's condition, if a point $\rho \in B^* \partial \mathcal{O}_j$ has an image by $\mathcal{B}_{ij}^\pm$, it cannot have one by $\mathcal{B}_{kj}^\pm$ for $k \neq i$. 
Let $\overline{A}_{ij}$ be the closure of the arrival set of the billiard map, that is 
$$ \overline{A}_{ij} = \overline{ \{ \rho \in B^* \partial \mathcal{O}_i, \exists \rho^\prime \in B^* \partial \mathcal{O}_j, \rho=\mathcal{B}_{ij}^+ (\rho^\prime)  \}}$$
Similarly, let $\overline{D}_{ij}$ be the closure of the departure set of the billiard map, that is 
$$ \overline{D}_{ij} = \overline{ \{ \rho^\prime \in B^* \partial \mathcal{O}_j, \exists \rho \in B^* \partial \mathcal{O}_i,  \rho=\mathcal{B}_{ij}^+ (\rho^\prime)\}}$$
We also note $$\overline{A}_i = \bigsqcup_{j \neq i} \overline{A}_{ij} \; ; \; \overline{D}_i = \bigsqcup_{j \neq i} \overline{D}_{ji}$$
Finally, we introduce the arrival and departure glancing regions : 
$$ \widetilde{A}_i^{\mathcal{G}} = \overline{A}_i \cap S^* \partial \mathcal{O}_i \; ; \;  \widetilde{D}_i^{\mathcal{G}} = \overline{D}_i \cap S^* \partial \mathcal{O}_i  $$
We recall the main facts proved in \cite{NSZ14} concerning these relations and their link with $\mathcal{M}(z)$ : 

\begin{lem}\label{Lem_no_eclipse}
(See Figure \ref{figure_csq_no_eclipse}). Assuming that the obstacles satisfy the no-eclipse condition, the following holds : let $i \neq j \neq k$. Let $( \rho_1, \rho_1^\prime) \in \overline{\Gr(\mathcal{B}_{ji}^-) }$ and $(\rho_2,\rho_2^\prime)  \in \overline{\Gr(\mathcal{B}_{kj}^\pm)}$. Then, 
$$\rho_1 \neq \rho_2^\prime$$
\end{lem}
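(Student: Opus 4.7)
The plan is to argue by contradiction. Assume $\rho_1 = \rho_2'$; I will extract from the two closures a genuine straight-line configuration joining $\overline{\mathcal{O}_i}$ to $\overline{\mathcal{O}_k}$ through a point of $\partial \mathcal{O}_j$, in direct violation of the Ikawa no-eclipse condition.

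First I realize each closure by sequences and pass to the limit, using the compactness of $\partial \mathcal{O}_i, \partial \mathcal{O}_j, \partial \mathcal{O}_k$ and of $\mathbb{S}^1$; since the three obstacles are pairwise disjoint and compact, travel times are uniformly bounded above and bounded below away from $0$. Standard extraction then produces $y \in \partial \mathcal{O}_i$, $x \in \partial \mathcal{O}_j$, $z \in \partial \mathcal{O}_k$, directions $\xi_1, \xi_2 \in \mathbb{S}^1$ and times $t, s > 0$ with $y + t\xi_1 = x$, $x + s\xi_2 = z$, $\pi_j(x, \xi_1) = \rho_1$ and $\pi_j(x, \xi_2) = \rho_2'$. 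Passing to the limit in the strict inequalities defining $\mathcal{B}_{ji}^-$ at the arrival on $\partial\mathcal{O}_j$ and $\mathcal{B}_{kj}^\pm$ at the departure from $\partial\mathcal{O}_j$ yields $\langle \nu_j(x), \xi_1 \rangle \geq 0$ and $\langle \nu_j(x), \xi_2 \rangle \geq 0$.

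The next step exploits the hypothesis $\rho_1 = \rho_2'$, which forces $\pi_j(x, \xi_1) = \pi_j(x, \xi_2)$; hence $\xi_1$ and $\xi_2$ share the same tangential component at $x$. In dimension two, the preimage under $\pi_j$ of a point of $B^*\partial\mathcal{O}_j$ inside $\{x\} \times \mathbb{S}^1$ consists of at most two vectors, related by reflection across $T_x \partial \mathcal{O}_j$, whose values of $\langle \nu_j(x), \cdot \rangle$ have opposite signs. Combining this observation with the two sign conditions above forces $\xi_1 = \xi_2 =: \xi$. Consequently $y + t\xi = x$ and $x + s\xi = z$ with $t, s > 0$ place $x$ on the segment joining $y \in \overline{\mathcal{O}_i}$ to $z \in \overline{\mathcal{O}_k}$, so $x \in \overline{\mathcal{O}_j} \cap \mathrm{conv}(\overline{\mathcal{O}_i} \cup \overline{\mathcal{O}_k})$, contradicting no-eclipse.

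The only mild subtlety I foresee is that the strict condition $\langle \nu_j, \xi \rangle > 0$ coming from the shadow map $\mathcal{B}^-$ survives only as $\geq 0$ after passing to the closure; this is, however, precisely what is needed to combine with the complementary sign condition from the second ray and to obtain the uniqueness $\xi_1 = \xi_2$. Apart from this sign bookkeeping, the argument is pure compactness plus one direct application of the no-eclipse hypothesis.
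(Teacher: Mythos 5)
Your argument is correct and is precisely the geometric reasoning the paper invokes for this lemma (which is recalled from \cite{NSZ14} and illustrated by Figure \ref{figure_csq_no_eclipse}): compactness of the boundaries, of $\mathbb{S}^1$ and of the (uniformly bounded, bounded below) travel times handles the passage to the closure, the two sign conditions $\langle \nu_j(x),\xi_1\rangle\geq 0$, $\langle \nu_j(x),\xi_2\rangle\geq 0$ at the common point of $\partial\mathcal{O}_j$ force $\xi_1=\xi_2$ (including the glancing case, where both normal components vanish), and the resulting straight segment joining $\overline{\mathcal{O}_i}$ to $\overline{\mathcal{O}_k}$ through a point of $\overline{\mathcal{O}_j}$ is exactly what the no-eclipse condition forbids. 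One cosmetic remark: when $i=k$, which the hypothesis $i\neq j\neq k$ permits and which does occur in the application to $\mathcal{M}_{ij}^{\pm}(z)\circ\mathcal{M}_{jk}^{-}(z)$, the segment lies in the convex set $\overline{\mathcal{O}_i}$ itself, so the final contradiction is with the disjointness of $\overline{\mathcal{O}_j}$ and $\overline{\mathcal{O}_i}$ (itself a consequence of the setup) rather than with the no-eclipse condition stricto sensu.
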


\begin{figure}[h]
\includegraphics[width=10cm]{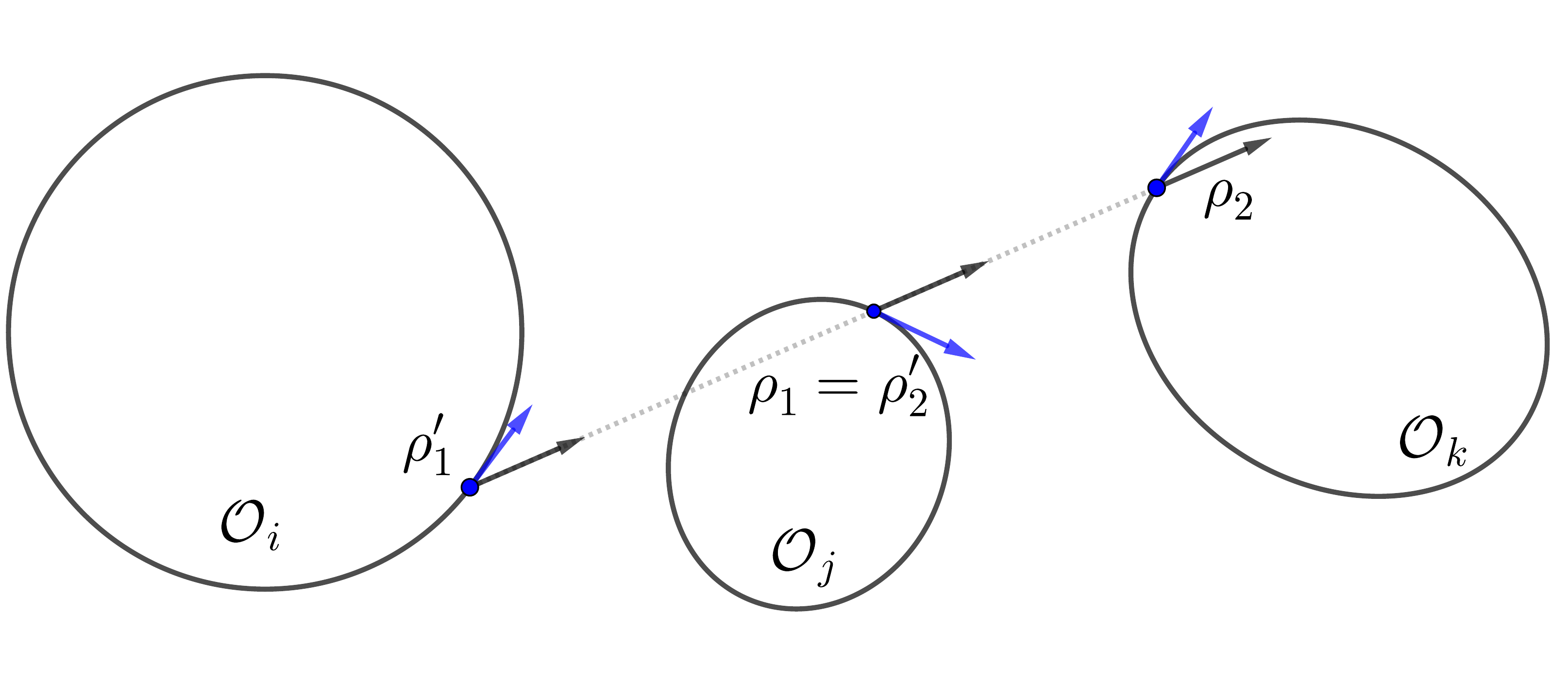}
\caption{The no-eclipse condition prevents such situation. The points $\rho_1,\rho_2,\rho_1^\prime $ are represented on the form $\rho=(y,\eta)$ with a blue point for $y \in \partial \mathcal{O}$ and a blue arrow for $\eta \in B^*_y \partial \mathcal{O}$. The limit situations, where the dotted line would be tangent to one of the obstacle are also excluded. } 
\label{figure_csq_no_eclipse}
\end{figure}

In particular, it is possible to consider open neighborhoods $U_i^A$ and $U_i^D$ of $\widetilde{A}_i^{\mathcal{G}}$ and $\widetilde{D}_i^{\mathcal{G}}$ respectively, such that (see Figure \ref{figure_glancing}), by noting $\pi_R$ (resp. $\pi_L$) the projection $(\rho^\prime, \rho) \mapsto \rho$ (resp. $(\rho^\prime, \rho) \mapsto \rho^\prime$) 

\begin{align*}
\rho^\prime \in U_i^A \implies  \rho^\prime \not \in \bigcup_{k \neq i} \pi_R \left( \Gr( \mathcal{B}_{ki}^\pm) \right) \text{ (see Figure \ref{subfigure_A} : it means that the ray continues to infinity)}  \\
\rho \in U_i^D \implies\rho \not \in \bigcup_{k \neq i} \pi_L \left( \Gr( \mathcal{B}_{ik}^\pm) \right)  \text{ (see Figure \ref{subfigure_B} : it means that the ray comes from infinity)}
\end{align*}

\begin{figure}[h]
\begin{subfigure}{0.48 \textwidth}
\centering
\includegraphics[width=7cm]{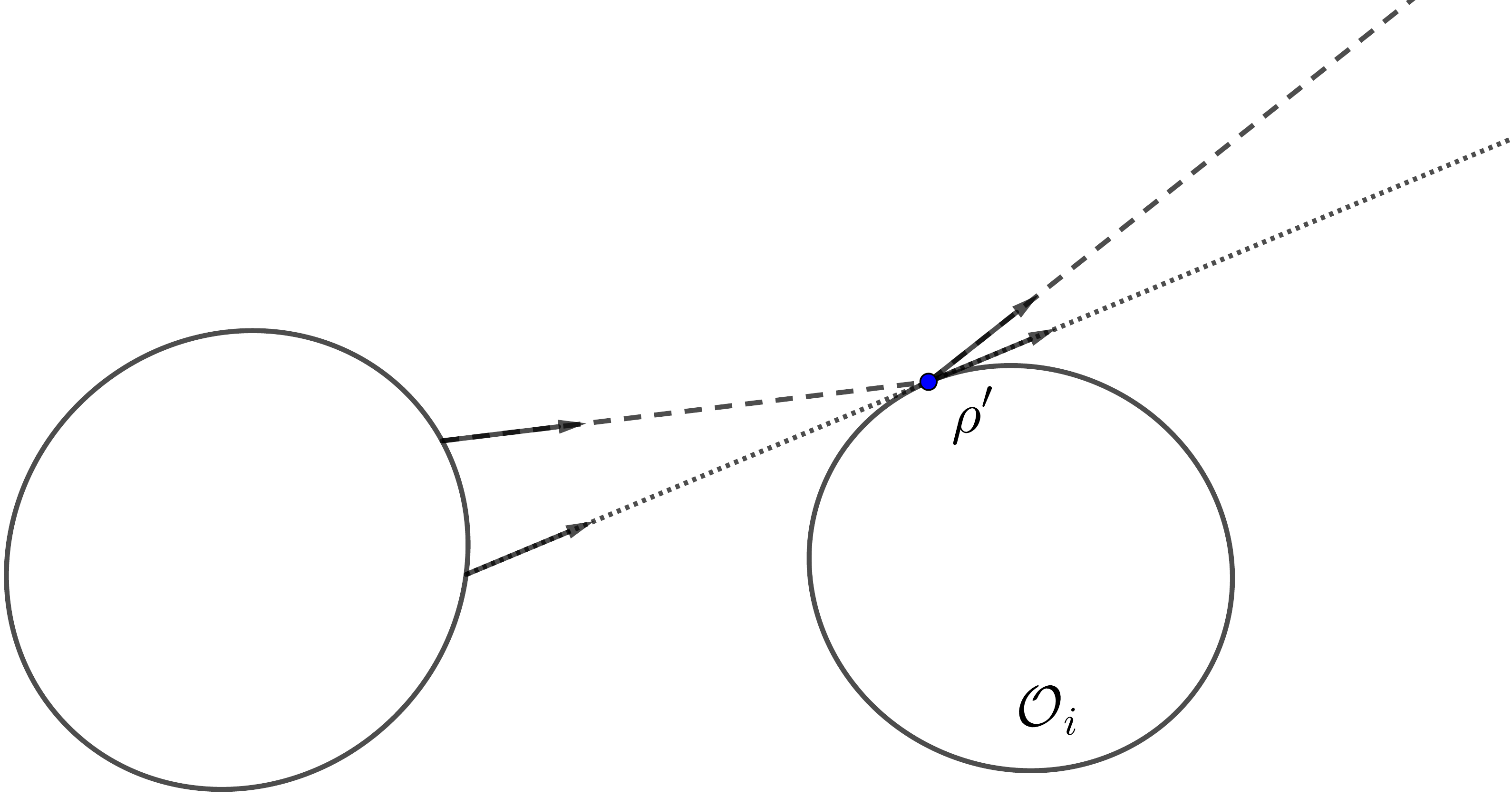}
\caption{Due to the no-eclipse condition, if a trajectory coming from an obstacle becomes glancing, then it goes on to infinity without hitting another obstacle. This holds in a neighborhood of the glancing ray and allows to define $U_i^A$.}
\label{subfigure_A}
\end{subfigure}
\vrule
\hspace*{0.1cm}
\begin{subfigure}{0.48 \textwidth}
\centering
\includegraphics[width=7cm]{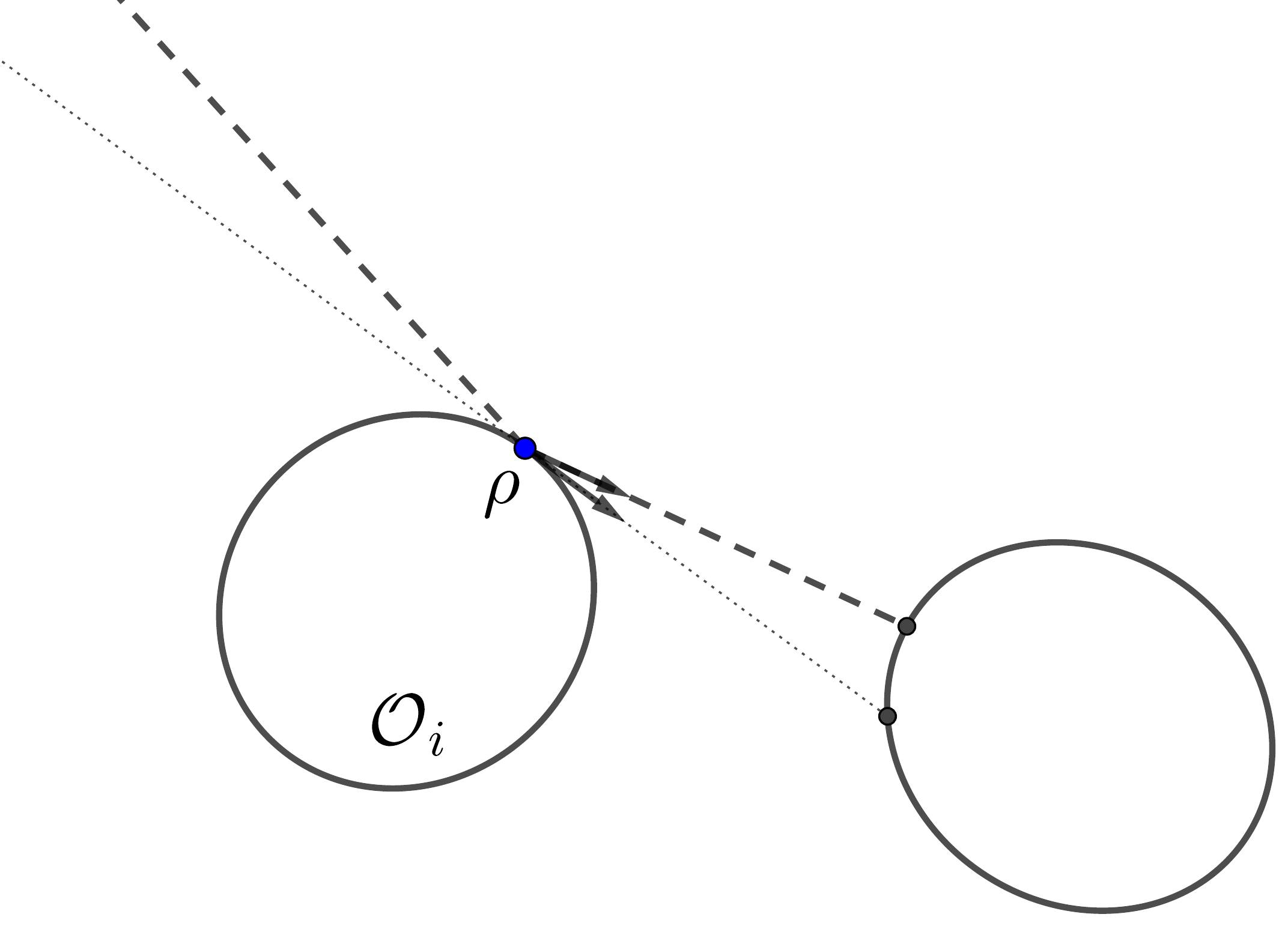}
\caption{Due to the no-eclipse condition, if a glancing trajectory hits an obstacle, it means that the ray comes from infinity. This holds in a neighborhood of the glancing ray and allows to define $U_i^D$.}
\label{subfigure_B}
\end{subfigure}
\caption{The sets $U_i^A$ and $U_i^D$ are built by using the properties of the glancing rays. The dotted lines correspond to glancing rays, the broken lines represent trajectories close to the glancing region.  }
\label{figure_glancing}
\end{figure}

Let us fix cut-off functions $\chi_i^{D}$ (resp. $\chi_i^{A}$) such that $\chi_i^{D}=1$ near $D_i^{\mathcal{G}}$ (resp. $\chi_i^A =1$ near $A_i^{\mathcal{G}}$) and $\supp \chi_i^{D} \subset U_i^{D}$ (resp. $\supp \chi_i^{A} \subset U_i^{A}$). 

We gather the results of Proposition 6.7 in \cite{NSZ14} and some of its consequence in the following proposition. It is based on the microlocal analysis of the operators involved. 
\begin{prop}\label{prop_quantum_billiard}
For $i \neq j$ , 
\begin{itemize}
\item uniformly in $z \in D(0,Kh)$,  $$  \mathcal{M}_{ij}(z) \op(\chi_j^A) =  \hinf_{L^2 \to \cinf}  \; ; \;  \op(\chi_i^D ) \mathcal{M}_{ij}(z) = \hinf_{L^2 \to \cinf} $$ 
\item By excluding the glancing region on the left and on the right, we have 
$$(1 - \op(\chi_i^D)) \mathcal{M}_{ij}(z) (1- \op(\chi^A_j))  \in I_0 (\partial \mathcal{O}_i \times \partial \mathcal{O}_j, \Gr\left( \mathcal{B}_{ij}^+\right)^\prime ) +I_0 (\partial \mathcal{O}_i \times \partial \mathcal{O}_j, \Gr(\left( \mathcal{B}_{ij}^-\right)^\prime )$$
so let us write
$$ (1 - \op(\chi_i^D)) \mathcal{M}_{ij}(z) (1- \op(\chi^A_j)) = \mathcal{M}_{ij}^+ (z) + \mathcal{M}_{ij}^-(z)$$
with $\mathcal{M}^\pm_{ij}(z) \in I_0 (\partial \mathcal{O}_i \times \partial \mathcal{O}_j, \Gr\left( \mathcal{B}_{ij}^\pm\right)^\prime ) $. Only compact parts of the interior of the graphs of $\mathcal{B}_{ij}^\pm$ are involved in the definition of the class  $I_0 (\partial \mathcal{O}_i \times \partial \mathcal{O}_j, \Gr\left( \mathcal{B}_{ij}^\pm\right)^\prime )$, depending on the support of $\chi_i^D$ and $\chi_j^A$ (see \ref{appendix_symplectic_map_quantization} in the appendix, for a description of this class). 
\item The operators $\mathcal{M}_{ij}^\pm(z)$ have amplitude $\alpha^\pm_{ij}(z)$ satisfying, for $z \in D(0,Kh)$ and for some $C_1, \tau >0$, 
$$ \alpha^\pm_{ij} (z) \leq C_1 e^{ - \tau \frac{\im z}{h}} $$ 
\item  Finally, in virtue of Lemma \ref{Lem_no_eclipse}, $\mathcal{M}_{ij}^\pm(z) \circ \mathcal{M}_{jk}^-(z) = \hinf_{L^2 \to \cinf}$ uniformly for $z \in D(0,Kh)$. 
\end{itemize}
\end{prop}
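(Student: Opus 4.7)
The plan is to follow the strategy of \cite{NSZ14}, Proposition 6.7, adapting the microlocal analysis to our 2D setting. The starting point is the identity $\Id - \mathcal{M}(z) = \gamma H(z)$, so all four items reduce to a microlocal description of the Poisson operator $H_j(z)$ followed by taking boundary traces with $\gamma_i$. The proof proceeds by analyzing $H_j(z)$ locally in cotangent directions over $\partial\mathcal{O}_j$, splitting into the hyperbolic region $|\eta_j|<1$ and a neighborhood of the glancing set $|\eta_j|=1$.

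For the first two items, I would treat the two regions in turn. In the hyperbolic region, standard WKB / Gaussian-beam parametrices for the outgoing Dirichlet problem show that $H_j(z)$ propagates a boundary wave packet at $(y_j,\eta_j)$ along the outgoing geodesic leaving $\partial \mathcal{O}_j$. When this ray meets another boundary $\partial \mathcal{O}_i$, the trace $\gamma_i H_j(z)$ is an FIO whose canonical relation has two pieces: one associated with the billiard map $\mathcal{B}_{ij}^+$ (direct transverse hit) and one with the shadow map $\mathcal{B}_{ij}^-$ (second branch of the eikonal equation at $\partial \mathcal{O}_i$, capturing the grazing/diffractive contribution). This is exactly the decomposition $\mathcal{M}_{ij}^+ + \mathcal{M}_{ij}^-$ of item 2. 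Near glancing, the WKB construction breaks down and one uses instead the Melrose--Taylor parametrix, which introduces an Airy factor with rapid decay transverse to the glancing hypersurface. The cutoffs $\Id - \op(\chi_i^D)$ and $\Id - \op(\chi_j^A)$ exclude exactly the tangential regions, so the Airy decay, combined with the fact that $\chi_i^D$, $\chi_j^A$ equal $1$ near $\widetilde A_i^{\mathcal{G}}$, $\widetilde D_j^{\mathcal{G}}$, yields the $\hinf$ statements of item 1.

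For the amplitude bound of item 3, note that the phase of the outgoing wave packet transported by $H_j(z)$ along the connecting geodesic contains a factor $e^{i\lambda_h(z)\,d(y_j,y_i)}$, where $d(y_j,y_i)$ is the Euclidean distance traveled in the exterior and $\lambda_h(z) = h^{-1}(1+z)^{1/2}$. For $|z|\leq Kh$ with $h$ small we have $\im \lambda_h(z) \sim \im z /(2h)$, and by Ikawa's geometry the distance $d(y_j,y_i)$ from any point on $\partial \mathcal{O}_j$ lying in the departure set to the corresponding arrival point on $\partial \mathcal{O}_i$ is bounded below by some $\tau_0>0$. Taking moduli, this produces the bound $|\alpha_{ij}^\pm(z)| \leq C_1 e^{-\tau \im z /h}$ with $\tau \sim \tau_0/2$, uniformly in $z\in D(0,Kh)$.

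Finally, item 4 is a pure composition-of-FIOs calculation. By the general rules for composing semiclassical FIOs, $\mathcal{M}_{ij}^\pm(z)\circ \mathcal{M}_{jk}^-(z)$ would be an FIO associated with the composed relation $\Gr(\mathcal{B}_{ij}^\pm)\circ \Gr(\mathcal{B}_{jk}^-)$, provided some intermediate point $\rho\in T^*\partial\mathcal{O}_j$ appears simultaneously as an arrival of $\mathcal{B}_{jk}^-$ and as a departure of $\mathcal{B}_{ij}^\pm$. Lemma \ref{Lem_no_eclipse}, which is the direct microlocal translation of Ikawa's no-eclipse hypothesis, rules this out on the closures of the relevant graphs; after shrinking the supports of $\chi_i^D$ and $\chi_j^A$ if necessary, the composition is therefore $\hinf$ from $L^2(\partial\mathcal{O})$ to $\cinf(\partial\mathcal{O})$. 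The main technical obstacle in this whole proposition is the glancing analysis underlying item 1, where Melrose--Taylor parametrices enter; this is precisely the delicate piece already carried out in \cite{NSZ14}, and adapting it here only requires verifying that the constructions are compatible with the 2D configuration.
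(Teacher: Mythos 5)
Your overall route is the same as the paper's: Proposition \ref{prop_quantum_billiard} is imported from \cite{NSZ14} (Proposition 6.7, together with the microlocal analysis of \cite{Ge88}), and its proof is exactly the analysis of $\gamma_i H_j(z)$ that you sketch — WKB propagation in the hyperbolic region, Melrose--Taylor/Airy analysis near glancing for the $\hinf$ statements of item 1, and Lemma \ref{Lem_no_eclipse} for the composition statement of item 4, which you handle correctly. Two points in your write-up need correction, one of which is a real gap.

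The gap is in item 3. The modulus of the transported factor is $|e^{i\lambda_h(z)\,d}| = e^{-\im\lambda_h(z)\,d}\approx e^{-d\,\im z/(2h)}$, with $d$ the length of the ray segment. For $\im z\geq 0$ this is $\leq 1$ and the bound is immediate; the regime that the paper actually needs is $\im z<0$, where this factor \emph{grows} like $e^{d|\im z|/(2h)}$, and to dominate it by $e^{-\tau\im z/h}=e^{\tau|\im z|/h}$ (with a constant that must stay under control, since the estimate is later used to define the threshold $\gamma_{\lim}$ and to guarantee $\|\alpha_h(z)\|_\infty\leq e^{\gamma/2\delta}$) one needs an \emph{upper} bound on the travel length over the compact region carried by the cutoffs, i.e.\ $\tau$ of the order of half the \emph{maximal} return length — this is precisely why the paper identifies $\tau$ with a maximal return time of the billiard flow near $\mathcal{T}$ (via the representation $\mathcal{M}^+(z)=\mathcal{M}^+(0)\op(e^{izt(\rho)/h})+O(h^{1-\varepsilon})$ of \cite{Nonnen11}, \cite{Ge88}). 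Your argument invokes a \emph{lower} bound $d\geq\tau_0$ and sets $\tau\sim\tau_0/2$, which proves the inequality only in the half of $D(0,Kh)$ above the real axis, where it is trivial, and does not give the bound in the strip $\im z<0$ where it is used. The fix is easy (the relevant parts of the graphs are compact, so travel lengths are bounded above), but as written the step fails where it matters.

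A smaller misstatement: the component $\mathcal{M}^-_{ij}$ associated with the shadow map is not a grazing/diffractive contribution. Since $H_j(z)$ solves the problem in $\R^2\setminus\overline{\mathcal{O}_j}$ and ignores $\mathcal{O}_i$, the free ray issued from $(y_j,\eta_j)$ crosses $\partial\mathcal{O}_i$ transversally twice, once on the illuminated side (giving the $\mathcal{B}^+_{ij}$ piece) and once on the shadow side (giving the $\mathcal{B}^-_{ij}$ piece); both are ordinary Fourier integral operators associated with compact parts of the interiors of the graphs. The diffractive (Melrose--Taylor) analysis is needed only near the glancing sets, i.e.\ exactly in the region removed by $\op(\chi_i^D)$ and $\op(\chi_j^A)$ in item 1, where your sketch agrees with \cite{NSZ14}.
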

Let us note $\mathcal{M}^\pm(z)$ the matrix of operators with 
$$  \left( \mathcal{M}^\pm(z)\right)_{ij} =\left\{ \begin{array}{l}
\mathcal{M}^\pm_{ij}(z) \text{ if } i \neq j \\
0 \text{ if } i=j
\end{array} \right.$$
Then, we observe that 

\begin{equation}
\left( \Id- \mathcal{M}(z) \right)(\Id + \mathcal{M}^-(z)) = \Id- \mathcal{M}^+(z) + \hinf_{L^2 \to \cinf}
\end{equation}
Since we are interested in invertibility in strips, let's note :
 $$ \Omega(\gamma,K, h) = D(0,Kh) \cap \{ \im z \geq - \gamma h\} $$  We have the rather obvious lemma : 

\begin{lem}\label{Lemma_bound_mathcalM}
Assume that for $z \in \Omega(\gamma,K,h)$, $\Id - \mathcal{M}^+(z)$ is invertible and satisfies the bound 
$$ ||(\Id- \mathcal{M}^+(z))^{-1}||_{L^2 \to L^2} \leq a(z,h)$$
with $a(z,h) \leq h^{-N}$ for some $N$ independent of $z \in \Omega(\gamma,K,h)$. Then, there exists $h_0 >0$ and $C>0$, such that
for $0< h \leq h_0$, and for all $z \in \Omega(\gamma,K,h)$, $\Id - \mathcal{M}(z)$ is invertible and satisfies
$$ ||(\Id - \mathcal{M}(z))^{-1}||_{L^2 \to L^2} \leq C a(z,h)$$
\end{lem}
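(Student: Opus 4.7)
The plan is to exploit the factorization
\[
(\Id - \mathcal{M}(z))(\Id + \mathcal{M}^-(z)) = \Id - \mathcal{M}^+(z) + \hinf_{L^2 \to L^2}
\]
together with the assumed control on $(\Id - \mathcal{M}^+(z))^{-1}$ to build a right inverse of $\Id - \mathcal{M}(z)$ with the stated bound, and then promote this to a two-sided inverse by an analytic Fredholm argument.

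First I would multiply the above identity on the right by $(\Id - \mathcal{M}^+(z))^{-1}$, which exists by hypothesis and has norm at most $a(z,h) \leq h^{-N}$. Setting
\[
B_0(z) \coloneqq (\Id + \mathcal{M}^-(z))(\Id - \mathcal{M}^+(z))^{-1},
\]
one obtains $(\Id - \mathcal{M}(z))\, B_0(z) = \Id + E(z,h)$, where $\|E(z,h)\|_{L^2 \to L^2} = \hinf \cdot a(z,h) = \hinf$ uniformly in $z \in \Omega(\gamma, K, h)$, because the $\hinf$ beats any fixed negative power of $h$. For $h$ small enough (uniformly in $z$), a Neumann series then furnishes $(\Id + E(z,h))^{-1}$ with operator norm at most $2$, so $B(z) \coloneqq B_0(z)(\Id + E(z,h))^{-1}$ is a bounded right inverse of $\Id - \mathcal{M}(z)$.

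The next step is to estimate $\|B(z)\|_{L^2 \to L^2}$. On $\Omega(\gamma, K, h)$ one has $\im z \geq -\gamma h$, so by Proposition \ref{prop_quantum_billiard} the amplitudes satisfy $\alpha^{-}_{ij}(z) \leq C_1 e^{\tau \gamma}$ uniformly, and standard $L^2$-continuity of FIOs in $I_0$ yields $\|\mathcal{M}^-(z)\|_{L^2 \to L^2} \leq C$ uniformly in $z$. Multiplying the three factor bounds together,
\[
\|B(z)\|_{L^2 \to L^2} \leq 2(1 + C)\, a(z,h),
\]
which is the announced estimate up to relabeling the constant.

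The remaining and most delicate point is to upgrade a bounded right inverse to a genuine two-sided inverse. For this I would invoke analytic Fredholm theory: $z \mapsto \Id - \mathcal{M}(z)$ is a holomorphic family of Fredholm operators of index $0$ on $L^2(\partial \mathcal{O})$ — this follows from the microlocal structure recorded in Proposition \ref{prop_quantum_billiard} together with its invertibility at sufficiently large $\im z$, lying in the non-resonance region — and a Fredholm operator of index $0$ that is surjective is automatically bijective. Hence $B(z) = (\Id - \mathcal{M}(z))^{-1}$ and the bound on $\|B(z)\|$ becomes the announced bound on $\|(\Id - \mathcal{M}(z))^{-1}\|$. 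The Fredholm/index-zero check is the only step requiring genuine care; it is however standard in the NSZ boundary-integral framework, while the algebraic manipulation and the uniform $L^2$ bound on $\mathcal{M}^-$ are routine.
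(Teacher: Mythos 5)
Your proof follows the same route as the paper's: the factorization $(\Id-\mathcal{M}(z))(\Id+\mathcal{M}^-(z))=\Id-\mathcal{M}^+(z)+\hinf$, right multiplication by $(\Id-\mathcal{M}^+(z))^{-1}$ (the polynomial bound $a(z,h)\leq h^{-N}$ keeping the error $\hinf$), a Neumann series, and the uniform $L^2$ bound on $\mathcal{M}^-(z)$ coming from its amplitude bound on $D(0,Kh)$. Your final step, upgrading the right inverse to a two-sided inverse via the index-zero Fredholm property of $\Id-\mathcal{M}(z)$ inherited from the NSZ framework, only makes explicit a point the paper's proof leaves implicit, so the argument is correct and essentially identical.
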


\begin{proof}
Assuming the invertibility of $\Id - \mathcal{M}^+(z)$, it suffices to write 
$$\left( \Id - \mathcal{M}(z)\right) (\Id + \mathcal{M}^-(z)) (\Id - \mathcal{M}^+(z))^{-1} = \Id + R(z,h)$$
with $R(z,h) = (\Id - \mathcal{M}^+(z))^{-1} \hinf_{L^2 \to \cinf} = \hinf_{L^2 \to L^2}$ uniformly in $z$. We conclude by a Neumann series argument to invert the right hand side $\Id + R(z,h)$ and use the bound on the amplitude of $\mathcal{M}^-(z)$ given in Proposition \ref{prop_quantum_billiard}, which gives a uniform bound for $\mathcal{M}^-(z)$ in $D(0,Kh)$. 
\end{proof}

It is then enough to prove the invertibility of $\Id -\mathcal{M}^+(z)$ with polynomial resolvent bounds, where $\mathcal{M}^+(z)$ is associated with the billiard map.

\paragraph{Conjugation by an escape function. }
The operator $\mathcal{M}^+(z)$ satisfies almost all the assumptions of Proposition \ref{Prop_Crucial_Estimate} for the relation $F = \mathcal{B}^+$, except that it is not very small outside a fixed compact neighborhood of $\mathcal{T}$ \footnote{strictly speaking, $\partial \mathcal{O}$ is a not a disjoint union of intervals, but since we work with the relation $\mathcal{B}^+$, we can use microlocal cut-offs to restrict to the relevant part of the obstacles, which is included in a disjoint union of open intervals}. To fix this problem, following \cite{NSZ14} (Section 6.3), we can introduce a smooth escape function $g_0$. Recall that $\mathcal{T}$ is the trapped set for $F = \mathcal{B}^+$ and let $\mathcal{W}_1 \Subset \mathcal{W}_2 \Subset \mathcal{W}_3$ be  subsets of $\bigcup_i \widetilde{D}_i$ such that $\mathcal{T} \subset \mathcal{W}_1$ and such that $\mathcal{W}_3$ is large enough so that $\mathcal{M}^+(z) \op(\phi) = \hinf_{L^2 \to \cinf}$ for any smooth function $\phi$ such that $\supp (\phi) \cap \mathcal{W}_3 = \emptyset$. This is possible in virtue of the third point in Proposition \ref{prop_quantum_billiard}. Concerning $\mathcal{W}_2$, it can be an arbitrarily small neighborhood of $\mathcal{T}$. Then, one can construct $g_0$ such that (see Lemma 4.5 in \cite{NSZ14}), 
\begin{align*}
&g_0 = 0 \text{ in } \mathcal{W}_1 \\
&g_0 \circ F - g_0 \geq 0 \text{ in } \mathcal{W}_3 \\
&g_0 \circ F - g_0 \geq 1 \text{ in } \mathcal{W}_3 \setminus \mathcal{W}_2
\end{align*}

Then, we set $g = T \log(1/h) g_0$ for some $T >0$ fixed and large enough, so that $ e^{\pm \op(g)}$ are pseudodifferential operators and satisfies 
$$ e^{\pm \op(g)} = O\left( h^{-TC_0} \right) \; ; \; C_0 = ||g_0||_\infty$$
(Note that $\op(g)$ is a diagonal matrix-valued operator on $L^2(\partial \mathcal{O}) = \bigoplus L^2 (\partial \mathcal{O}_i)$), and in virtue of Egorov's theorem, the operator 
$$\mathcal{M}_g^+ \coloneqq e^{-\op(g)} \mathcal{M}^+(z) e^{\op g} $$
is $O(h^L)$ for some $L>0$, microlocally outside a neighborhood of $\mathcal{T}$, which can be made as small as necessary if $\mathcal{W}_2$ is small enough. 

\paragraph{End of proof. }
We can now apply Proposition \ref{Prop_Crucial_Estimate} and then, Proposition \ref{Prop_Estimate_M}, to $M(h)=\mathcal{M}_g^+(z)$ for $z \in D(0,Kh)$ with $ \im z \geq - \gamma h$. To control the amplitude $\alpha_h(z)$ of $\mathcal{M}_g^+(z)$, we simply need a bound in a small neighborhood of $\mathcal{T}$ in which $\mathcal{M}_g^+$ is not $O(h^L)$. In virtue of Egorov's theorem, the amplitude of $\mathcal{M}_g^+$ is smaller than the amplitude of $\mathcal{M}^+$. We now claim that there exists $\tau >0$ such that the amplitude satisfies : 
$$||\alpha_h(z)||_\infty \leq  e^{- \tau \frac{\im z}{h} }$$
In fact, as explained in \cite{Nonnen11} (Theorem 6), microlocally near the trapped set, it is possible to write $$M^+(z) = M^+(0) \op(a_{h,z}) + O(h^{1-\varepsilon}) \; ; \; a_{h,z}(\rho) = \exp\left(\frac{ iz t(\rho)}{h} \right)$$ where $t(\rho)$ is the time needed for a ray emanating from $\rho$ to hit another obstacle. This fact is a consequence of the microlocal analysis performed in \cite{Ge88} (see Appendix II for the construction of a parametrix and III.2 for precise computations near the unique trapped ray for two obstacles, see also \cite{StefanovVodev95}). In particular, $\tau$ in the estimate above is a maximal return time for the billiard flow, in a small neighborhood of $\mathcal{T}$. 

Now, let $\gamma, \delta$ be the constants given by Proposition \ref{Prop_Crucial_Estimate}, depending, in this context, on the dynamics of the billiard map. 
Let us introduce the following threshold $\gamma_{\lim} = \frac{1}{2\tau} \frac{\gamma}{\delta} $ so that 
$$ z \in \Omega(\gamma_{\lim}, K,h) \implies ||\alpha_h(z)||_\infty \leq e^{\gamma/2\delta} < e^{ \gamma/\delta} $$
Proposition \ref{Prop_Estimate_M} now gives for $z \in \Omega(\gamma_{\lim}, K,h)$, 
$$\left| \left| \left( \Id - \mathcal{M}_g^+(z) \right)^{-1} \right| \right| \leq 2 \delta |\log h | h^{- \delta \log A }$$
where
$$A  \coloneqq \max(1, \tau \gamma_{\lim}) $$ 
Indeed, $A \geq 1$ and it allows to have $||\alpha_h(z)||_\infty \leq A$ for $z \in \Omega(\gamma_{\lim}, K,h)$. 
Going back to $\mathcal{M}^+(z)$, we get that 
$$\left| \left| \left( \Id - \mathcal{M}^+(z) \right)^{-1} \right| \right| \leq C |\log h | h^{- \delta \log A - 2 C_0T  }$$
where the extra $h^{-2C_0T}$ comes from the norm of $e^{\pm \op(g)}$. 
We conclude with Lemma \ref{Lemma_bound_mathcalM} and the formula (\ref{Key_formula_from_M_to_R}), using the estimates of Lemma \ref{Lemma_gamma_j} and \ref{Lemma_H_j}. This gives for $h$ small enough and $z \in \Omega(\gamma_{\lim}, K,h)$, 
\begin{equation}\label{last_equation}
||R_\theta(z)||_{L^2 \to L^2} \leq C| \log h| h^{-1 - 2 C_0 T - \delta \log A}
\end{equation}

\subsection{Proof in the case of scattering by a potential. }
The treatment of scattering by a potential is different and relies on a reduction to Poincaré sections of the Hamiltonian flow, under the assumption that the trapped set is totally disconnected. 

\subsubsection{Assumptions.}\label{Section_potential_scattering} We refer the reader to \cite{NSZ11} (Section 2.1) for more general assumptions. Here, we simply consider a smooth compactly supported potential $V \in \cinfc(\R^2)$ and work with the semiclassical differential operator $-h^2 \Delta + V$. We fix an energy $E_0>0$ and consider 
$$P_h = -h^2 \Delta + V - E_0$$
We note $p(x,\xi) = \xi^2 + V - E_0$ and we assume that $0$ is not a critical energy of $p$, that is
$$ dp \neq 0 \text{ on } p^{-1}(0)$$ 
Let's note $H_p$ the Hamiltonian vector field associated with $p$ and $\Phi_t = \exp(tH_p)$ the corresponding Hamiltonian flow. The trapped set at energy $0$ is the set 
$$K_0 = \{ (x,\xi) \in p^{-1}(0),  \exists R >0, \forall t \in \R , \Phi_t(x,\xi) \in B(0,R) \} $$
It is a compact subset of $p^{-1}(0)$. Here are the two crucial assumptions : 
\begin{enumerate}[label= (\roman*)]
\item $\Phi_t$ is hyperbolic on $K_0$ ; 
\item $K_0$ is topologically one dimensional. 
\end{enumerate}

\subsubsection{The reduction of \cite{NSZ11}}
We recall the main ingredients of the reduction to open quantum maps performed in \cite{NSZ11}. The aim of the following lines is to explain their crucial Theorem 5.  

Let us note $$\mathcal{R}(\eta, M_0,h) = \{ z \in \C, |\re z| \leq \eta, |\im z| \leq M_0 h \}$$ 
Here, $M_0$ is fixed (but large). As in the case of obstacle scattering, we fix once and for all the cut-off function $\chi \in \cinfc(\R^2)$ (with $\chi= 1$ in a neighborhood of $\supp(V)$) and we consider a complex scaled version of $P(h)$, $P_\theta(h)$ whose eigenvalues coincide with the resonances in $\mathcal{R}(\eta, M_0,h)$ and such that $\chi (P_\theta-z)^{-1}\chi = \chi (P-z)^{-1}  \chi$ for $z \in \mathcal{R}(\delta,M_0,h)$. 
Note that the parameter $\theta$ chosen in \cite{NSZ11} depends on $h$. 

Here are now the crucial ingredients of the reduction. 
\begin{itemize}
\item \textbf{Poincaré sections}. There exist finitely many smooth contractible hypersurfaces $\Sigma_i \subset p^{-1}(0)$, $i=1, \dots, J$ with smooth boundary and such that 
\begin{center}
$$\partial \Sigma_i \cap K_0 = \emptyset \; ; \; \Sigma_i \cap \Sigma_k = \emptyset , k \neq i $$
$H_p$ is transversal to  $\Sigma_i $ uniformly up to the boundary
\end{center}
Moreover, for $1 \leq i \leq J$ and for every $\rho \in K_0$, there exists $t_-(\rho) <0$ and $i_-(\rho)$ (resp. $t_+(\rho)>0$ and $i_+(\rho)$) such that 
\begin{align*}
\Phi_{t_\pm(\rho) } (\rho) \in K_0 \cap \Sigma_{i_\pm(\rho)} \\
\Sigma \cap \{ \Phi_t(\rho) , t_-(\rho) < t < t_+(\rho) , t \in \R^* \} = \emptyset
\end{align*}
where we note $$\Sigma = \bigsqcup_{i=1}^J \Sigma_i$$
\begin{figure}
\includegraphics[width=10cm]{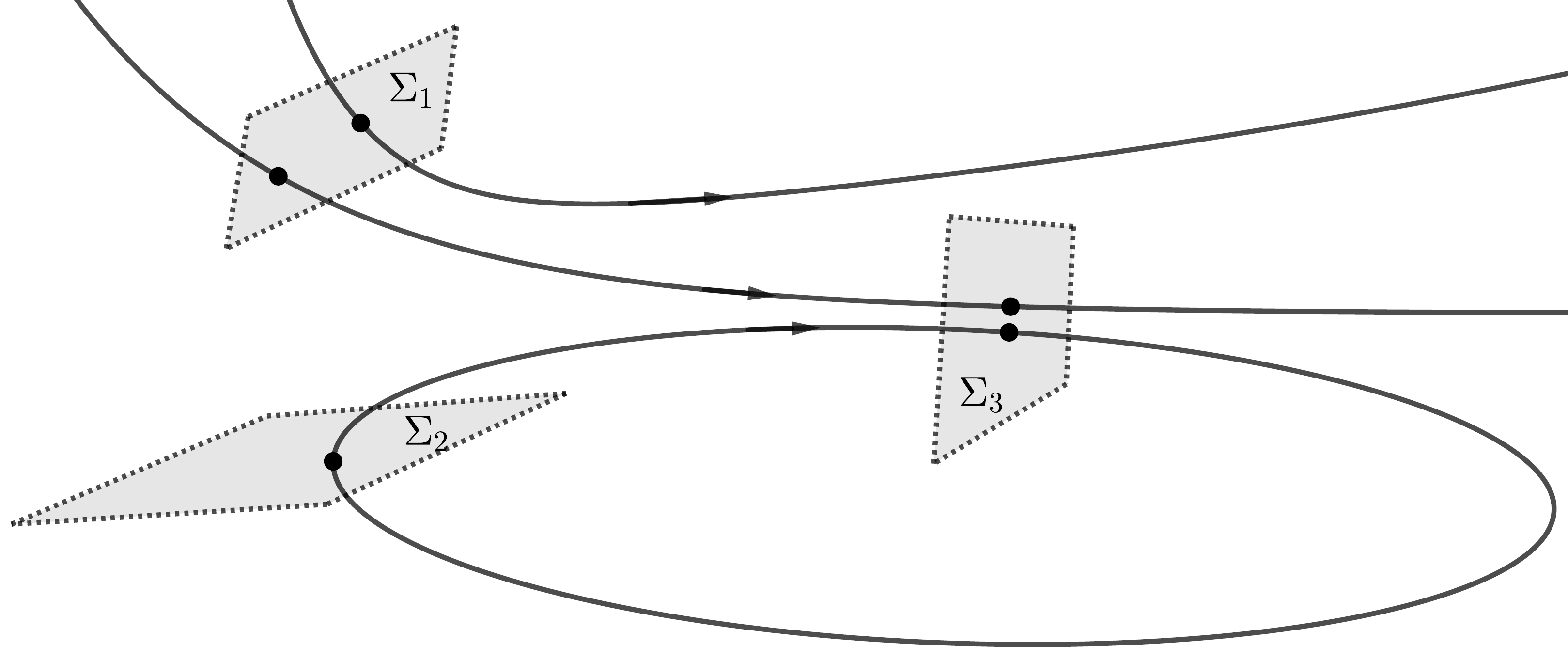}
\caption{Schematic representation of Poincaré sections for the flow $\Phi_t$ on an energy shell. The energy shell has dimension 3, so that the Poincaré section are 2-dimensional.  }
\label{figure_poincare_sections}
\end{figure}
The maps $t_\pm(\rho)$ are uniformly bounded on $K_0$ and can be smoothly extended in a neighborhood of $K_0$.  For convenience, it is also assumed that $i_+(\rho) \neq i$ for all $\rho \in K_0 \cap \Sigma_i$. This can be achieved by taking smaller and more Poincaré sections. Finally, there exist $\widetilde{\Sigma}_i \Subset \R^2$ and symplectic diffeomorphisms $$\kappa_i : \Sigma_i \to \widetilde{\Sigma}_i$$ smooth up to the boundary. 
\item \textbf{Poincaré return map}. For $1 \leq i,j \leq J$, the map $\rho \mapsto \Phi_{t_+(\rho)} (\rho)$, initially defined on $K_0$, extends smoothly to a symplectic diffeomorphism 
$$ F_{ij} : D_{ij} \to A_{ij}$$ by taking the intersection of the flow of a point $\rho \in D_{ij}$ with $\Sigma_i$
where $D_{ij}$ (resp. $A_{ij}$) is a neighborhood of $$ \{ \rho \in \mathcal{T} \cap \Sigma_j, i_+(\rho)=i\}  \text{ (resp. } \{ \rho \in \mathcal{T} \cap \Sigma_i, i_-(\rho)=j\} )$$ in $\Sigma_j$ (resp. $\Sigma_i$).  
The map $F_{ij}$ is called the Poincaré return map. By writing it in the charts $\kappa_i$ and $\kappa_j$, we can consider the following map between open sets of $T^*\R$ 
$$ \widetilde{F}_{ij} = \kappa_i \circ F_{ij} \circ \kappa_{j}^{-1} : \widetilde{D}_{ij} \subset \widetilde{\Sigma}_j \to  \widetilde{A}_{ij}\subset \widetilde{\Sigma}_i $$
Using the continuity of the flow, the same objects can be defined on energy shells $p^{-1}(z)$ for $z \in [-\delta, \delta]$ with $\delta$ small enough and we will note these objects $\Sigma_i^{(z)}$, $F_{ij}^{(z)}$, etc. In fact, it is possible to use the same open sets $\widetilde{\Sigma}_i$ and define, 
$$ \widetilde{F}_{ij}^{(z)} = \kappa_i \circ F_{ij}^{(z)} \circ \kappa_{j}^{-1} : \widetilde{D}_{ij}^{(z)} \subset \widetilde{\Sigma}_j \to  \widetilde{A}_{ij}^{(z)} \subset \widetilde{\Sigma}_i $$
The hyperbolicity of the flow implies the hyperbolicity of these open maps.

\item \textbf{Open quantum maps. } The notion of open quantum hyperbolic map associated with $\tilde{F}$ has been given in Definition \ref{def_FIO}. Since $\widetilde{\Sigma}_j \subset T^* \R$, we will simply say that it is an operator-valued matrix $T= (T_{ij})_{1 \leq i,j \leq J} : L^2(\R)^J \to L^2(\R)^J$ with $T_{ij} \in I_\delta(\R \times \R, \Gr\left( \widetilde{F}_{ij}\right)^\prime)$. In \cite{NSZ11}, the authors construct a particular family of open quantum hyperbolic maps, called $\mathcal{M}(z)$, where $\mathcal{M}(z)$ is associated with $\widetilde{F}^{(\re z)}$. This family is first microlocally defined near the trapped set and satisfies uniformly in $\mathcal{R}(\eta,M_0,h)$ and microlocally in a fixed neighborhood of the trapped set : 
$$\mathcal{M}_{ij}(z) = \mathcal{M}_{ij}(\re z) \op (a_z)+ O(h \log h) \; ; \; a_z = \exp\left( i \frac{ (\im z) t_+^{(z)}}{h} \right) $$
This particular family is built to solve a microlocal Grushin problem (see section 4 in \cite{NSZ11}). 

\item \textbf{Escape functions.} To perform a global study (i.e. no more microlocal) and to make the amplitude of $\mathcal{M}(z)$ very small outside a fixed neighborhood of the trapped set, the authors introduce an escape function for the flow $\Phi_t$, denoted $G_0 \in \cinfc(T^*\R^2)$ (see Lemma 5.3 in \cite{NSZ11}, where it is chosen independent of the energy variable near $\Sigma$). Let us note $G = Mh\log(1/h) G_0$, $g_j = G|_{\Sigma_j} \circ \kappa_j^{-1}$ and $$g : \bigsqcup_{j=1}^J \widetilde{\Sigma}_j \to \R, \rho \in \widetilde{\Sigma}_j \mapsto g_j(\rho) $$
$g$ is an escape function for the map $\widetilde{F}$. Each $g_j$ can be extended as an element of $\cinfc(\R^2)$ (see equation 5.2 and below in \cite{NSZ11}). 
\item \textbf{Conjugated operators.}
As in the case of obstacle scattering, we can consider the operators $$e^{\pm \op(G)} : L^2(\R^2) \to L^2(\R^2) \; ; \; e^{\pm \op(g)} : L^2(\R)^J \to L^2(\R)^J $$ 
Again, their norm is bounded by $O(h^{-K_G})$ for some $K_G>0$ depending on $G_0$ and $M$.  
We now introduce the following conjugated operators : 
$$ P_{\theta,G} = e^{-\op(G)}P_\theta e^{\op(G)} \; ; \, \mathcal{M}_{ij,g}(z) = e^{-\op(g_i)} \mathcal{M}_{ij}(z)e^{\op(g_j)} $$
$$\mathcal{M}_g(z) = \left( \mathcal{M}_{ij,g}(z) \right)_{1\leq i,j \leq J} $$ 
The escape function $G_0$ is built so that $\mathcal{M}_g(z) :L^2(\R)^J \to L^2(\R)^J$ is $O(h^{K_0})$ for some (large) $K_0$ microlocally outside a small neighborhood of the trapped set $\mathcal{T}$. In particular, $\mathcal{M}_g(z)$ satisfies the assumptions of the propositions \ref{Prop_Crucial_Estimate} and \ref{Prop_Estimate_M}. 

\item \textbf{A finite dimensional space.} For practical and technical reasons,\footnote{Mainly, to ensure the existence of determinants without discussion}  the authors choose to work with a finite dimensional version of the open quantum map $\mathcal{M}_g(z)$. To do so, they introduce finite rank projections $\Pi_j  : L^2(\R) \to L^2(\R)$ and the finite dimensional subspace of $L^2(\R)^J$
$$\mathcal{H} = \text{Ran } \Pi_1 \times  \dots \times \text{Ran } \Pi_J $$ 
The $\Pi_j$'s are built so that the projector $\Pi = \text{Diag}(\Pi_1, \dots, \Pi_J)$ satisfies the very important relation 
\begin{equation}\label{equivalent_finite_rank}
\Pi \mathcal{M}(z) \Pi = \mathcal{M}(z) \Pi + O(h^{K_1})
\end{equation}
for some large $K_1$ (in particular $K_1 \gg K_G$ so that the same relation holds after conjugation by $e^{\op g}$ with $K_1$ replaced by $K_1-2K_G$). We will note $\Pi_g = e^{-\op(g)} \Pi e^{\op(g)}$. 

\item \textbf{The Grushin problem.} 
To obtain a global Grushin problem (see section 5 in \cite{NSZ11}) the authors construct global operators $R_+(z) : H^2_h(\R^2) \to \mathcal{H}, R_-(z) : \mathcal{H} \to H^2_h(\R^2)$ which depend holomorphically on $z \in \mathcal{R}(\eta,M_0,h)$. The Grushin problem concerns \begin{equation} \label{grushin_problem}
\mathcal{P}_g(z) = \left( \begin{matrix}
P_{\theta,G} -z  & R_- (z) \\
R_+(z)& 0
\end{matrix} \right)  : H^2_h(\R^2) \times \mathcal{H} \to L^2(\R^2) \times \mathcal{H} 
\end{equation}
The goal of such a Grushin problem is to transform the eigenvalue equation $P_{\theta,G} u =zu$ into an equation on a simpler operator $E_\pm(z)$. This transformation is possible when $\mathcal{P}_g(z)$ is invertible. Indeed, in virtue of the so-called Schur complement formula, if $\mathcal{P}_g(z)$ is invertible with inverse $$\mathcal{E}(z) = \left( \begin{matrix}
E(z) & E_+(z)\\
E_-(z) &E_{\pm}(z) 
\end{matrix} \right)  : L^2(\R^2) \times \mathcal{H} \to H^2_h(\R^2) \times \mathcal{H}, $$
then $P_{\theta,G} -z $ is invertible if and only if $E_\pm(z)$ is and we have 
$$ (P_{\theta,G} - z)^{-1} = E(z) - E_+(z) E_\pm(z)^{-1}  E_-(z)$$
\end{itemize}

The authors prove the following result : 
\begin{thm}[\cite{NSZ11}, Theorem 5]
The Grushin problem (\ref{grushin_problem}) is invertible for all $z \in \mathcal{R}(\eta, M_0,h)$. If we note $$\mathcal{E}(z) = \left( \begin{matrix}
E(z) & E_+(z)\\
E_-(z) &E_{\pm}(z) 
\end{matrix} \right)  : L^2(\R^2) \times \mathcal{H} \to H^2_h(\R^2) \times \mathcal{H} $$ 
the inverse of $\mathcal{P}_g(z)$, then 
\begin{itemize}
\item\footnote{the norms are associated with the spaces mentionned above. For instance, $||E_-||_{\mathcal{H} \to H^2_h(\R^2)}$.}  $||E||, ||E_+||, ||E_-|| , ||E_\pm|| =O(h^{-1})$ uniformly in $\mathcal{R}(\eta, M_0,h)$.  
\item The operator $E_\pm(z)$ takes the form, for some $L_2 >0$
$$E_\pm(z) = \Id - M_g(z,h) + O(h^{L_2}) \; ; \; M_g(z,h) \coloneqq \Pi_g \mathcal{M}_g (z)\Pi_g$$
\end{itemize}
\end{thm}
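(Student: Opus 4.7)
The plan is to build an approximate inverse $\mathcal{E}_0(z)$ of $\mathcal{P}_g(z)$ with the desired block structure, then convert it to an exact inverse via a Neumann series. First, for the $(1,1)$-block $E_0(z)$, away from a fixed neighborhood of the trapped set $K_0$ the conjugated operator $P_{\theta,G}-z$ is microlocally invertible with an $O(h^{-1})$ bound: this is a standard non-trapping estimate, produced by the positive commutator associated with $G_0$, which is the whole point of introducing the escape function. So we define $E_0(z)$ by truncating $(P_{\theta,G}-z)^{-1}$ to a region where the flow escapes $K_0$ in finite time.

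Second, I would construct the $(1,2)$-block $E_{0,-}(z) : \mathcal{H} \to H^2_h(\R^2)$ as a Poisson-like operator. Given data $u_- = (u_-^j)_{j=1}^J \in \mathcal{H}$ attached to the sections $\widetilde{\Sigma}_j$, solve the approximate equation $(P_{\theta,G}-z) u = O(h^\infty)$ near each $\Sigma_j$ with prescribed microlocal trace $u_-^j$. This is a Fourier integral / WKB construction: propagate each $u_-^j$ along the Hamiltonian flow of $p-z$ by a small time on either side of $\Sigma_j$, stopping before reaching another Poincar\'e section. The conjugation by $e^{\op(G)}$ replaces the flow by its deformation generated by $p - i \{p,G\}$, which accounts for the dressings by $e^{\op(g_j)}$ on the section side. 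The $(2,1)$-block $E_{0,+}(z)$ is then the dual trace operator, chosen so that $R_+ E_{0,-}$ reproduces $u_-$ in a specific way. The operators $R_\pm(z)$ in the statement are in fact built precisely so that $R_+$ records the microlocal trace of a function on each $\Sigma_j$ and $R_-$ injects a Poincar\'e-section datum as a wavepacket; this is the matter of Section 4 of \cite{NSZ11}.

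Third, and this is the key computation, I would identify the $(2,2)$-block. Applying $E_{0,-}(z)$ to $u_-^j$ gives a quasimode concentrated on the forward piece of the flow starting from $\Sigma_j$. Reading its trace on $\Sigma_i$ with $i \neq j$ amounts to following the flow for time $t_+^{(z)}$ to the next section, which is exactly the Poincar\'e return map $\widetilde F_{ij}^{(\re z)}$; quantized, this action is $\mathcal{M}_{ij}(z)$, and after conjugation by the escape function it becomes $\mathcal{M}_{ij,g}(z)$. The $z$-dependent factor $\exp(iz t_+^{(z)}/h)$ appears precisely as the principal-symbol amplitude $a_z$ recalled above. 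Inserting the finite-rank projection $\Pi_g$ on both sides and using (\ref{equivalent_finite_rank}) one obtains
\begin{equation}
R_+(z) \, E_{0,-}(z) = \Id_{\mathcal{H}} - \Pi_g \mathcal{M}_g(z) \Pi_g + O(h^{L_2}) = \Id - M_g(z,h) + O(h^{L_2}),
\end{equation}
which is exactly the required form of $E_\pm$. The bound $O(h^{-1})$ on all four blocks follows from the standard FIO mapping properties of $E_{0,\pm}$, the $O(h^{-1})$ bound on $E_0$ coming from the non-trapping estimate, and the $O(h^{-K_G})$ price paid by the escape-function conjugation (absorbed into the $L_2$ power for $h$ small enough).

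The only remaining step is to convert $\mathcal{E}_0$ to a true inverse: a direct computation gives $\mathcal{P}_g(z) \mathcal{E}_0(z) = \Id + \mathcal{R}(z)$ with $\mathcal{R}(z) = O(h^{L_2})$ in operator norm, uniformly for $z \in \mathcal{R}(\eta,M_0,h)$, so for $h$ small enough $\mathcal{E}(z) = \mathcal{E}_0(z)(\Id + \mathcal{R}(z))^{-1}$ is well defined, satisfies the same block bounds, and preserves the expansion of $E_\pm$ modulo $O(h^{L_2})$. The main obstacle in the plan is the third step: tracking the composition $R_+ E_{0,-}$ as a genuine FIO equality rather than a symbolic statement requires careful bookkeeping of the symplectomorphisms $\kappa_j$, the effect of the escape conjugation on the Hamiltonian flow, the subprincipal symbol giving rise to the $a_z$ amplitude, and the almost-commutation (\ref{equivalent_finite_rank}) of $\Pi$ with $\mathcal{M}(z)$; any looseness in any of these ingredients immediately spoils the clean identification of $E_\pm$ with $\Id - M_g$.
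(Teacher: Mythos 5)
First, note that the paper does not actually prove this statement: it is imported verbatim as Theorem 5 of \cite{NSZ11}, so the only meaningful comparison is with the construction in that reference. Your overall architecture does match it — build an approximate inverse $\mathcal{E}_0(z)$ block by block (non-trapping/escape-function estimates away from $K_0$ for the $(1,1)$ block, forward propagation of section data for the Poisson-type block, the monodromy operator appearing when the propagated quasimode reaches the next Poincar\'e section), then correct by a Neumann series. That is the right skeleton.

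However, your key step contains a genuine algebraic error in the Grushin bookkeeping. You claim that the effective Hamiltonian is obtained as $R_+(z)E_{0,-}(z)=\Id-M_g(z,h)+O(h^{L_2})$, where your $E_{0,-}$ is the $(1,2)$ block $\mathcal{H}\to H^2_h(\R^2)$ (the paper's $E_+$). But the identity $\mathcal{P}_g(z)\mathcal{E}(z)=\Id$ forces the $(2,2)$ entry $R_+(z)E_+(z)$ to equal $\Id_{\mathcal{H}}$ \emph{exactly}; if $R_+E_{0,-}$ were $\Id-M_g$, then $\mathcal{P}_g\mathcal{E}_0-\Id$ would have a $(2,2)$ entry of size $O(1)$ (since $\|M_g\|\sim 1$ near the trapped set), your remainder $\mathcal{R}(z)$ would not be $O(h^{L_2})$, and the Neumann-series step would collapse. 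The operator $E_\pm(z)$ is not $R_+E_+$; it is determined by the $(1,2)$ entry of $\mathcal{P}_g\mathcal{E}=\Id$, namely $(P_{\theta,G}-z)E_+(z)+R_-(z)E_\pm(z)=0$. Concretely, in the construction of \cite{NSZ11} one applies $\frac{i}{h}(P_{\theta,G}-z)$ to the time-truncated forward-propagated quasimode $E_+v_+$: the truncation produces two boundary terms, one at the source section contributing $-R_-v_+$ and one just past the target section contributing $+R_-\mathcal{M}_g(z)v_+$ (this is where the return map and the amplitude $e^{izt_+/h}$ enter), whence $E_\pm(z)=\Id-M_g(z,h)+O(h^{L_2})$ after inserting $\Pi_g$ via (\ref{equivalent_finite_rank}). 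A secondary, smaller issue: the $O(h^{-K_G})$ loss from the conjugation cannot be ``absorbed into the $L_2$ power'' — a large negative power of $h$ is not dominated by a positive one; the $O(h^{-1})$ bounds hold because the theorem is stated directly for the conjugated operators and the propagation times are uniformly bounded.
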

\begin{rem}
As explained after Theorem 2 in \cite{NSZ11}, $L_2 = c^\prime M$ for some $c^\prime$, where $M$ is the one in the definition of the escape function $G$. In particular, $M$ can be chosen arbitrarily large, independently of $c^\prime$, so that $L_2$ can be made as large as necessary. 
\end{rem}

\subsubsection{End of proof. }
To rigorously apply Proposition \ref{Prop_Crucial_Estimate} to $\mathcal{M}_g(z)$, we fix $\eta_0 \in[-\eta, \eta]$ for $\eta$ small enough and consider $z \in D(\eta_0, Kh)$ for some fixed $0>K < M_0$. For such $z$, $\mathcal{M}(z)$ is an open quantum map associated with the Poincaré return map between the Poincaré sections $\Sigma(\eta_0) = \bigsqcup_{1 \leq j \leq J} \Sigma_j(\eta_0)$ inside the energy shell $p^{-1}(\eta_0)$. \\
Since $\mathcal{M}_g(z)$ satisfies the assumption of Proposition \ref{Prop_Crucial_Estimate} for $z \in D(\eta_0,Kh) \cap \mathcal{R}(\eta,M_0,h)$, it also satisfies its conclusion : 
\begin{equation}\label{Conclusion}
\exists h_0>0, \delta, \gamma >0 \; ; \; \forall 0 < h \leq h_0,  ||\mathcal{M}(z)_g^{N(h)}||_{L^2 \to L^2} \leq h^\gamma ||\alpha_h(z)||_\infty^{N(h)}
\end{equation} 
with $N(h) \sim \delta | \log h|$. \emph{A priori}, $h_0$, $\delta$, $\gamma$ depend on $\eta_0$. Nevertheless, as explained after Proposition \ref{Prop_Crucial_Estimate}, $\delta$ and $\gamma$ depend only on the properties of the Poincaré return map $F^{(\eta_0)}$ and $h_0$ depends on semi-norms of $\alpha_h(z) = e^{i\frac{z}{h} t_+^{(\eta_0)}}$. As explained in Section 4.1.1 in \cite{NSZ11}, this dynamics depends continuously on $\eta_0$ in a neighborhood of $0$ : that is, the departure sets, the arrival sets, the Poincaré maps and the return time function depend continuously on $\eta_0$. As a consequence, we can find $\eta$ and constants $\delta$, $\gamma$, $h_0$ such that 
(\ref{Conclusion}) holds for $z \in \Omega(\eta, \gamma,h) \coloneqq \{ |\re z | \leq \eta, -\gamma h \leq \im z\leq 0 \}$.  From this, we see that for $0 < h \leq h_0$ and for $z \in \Omega(\eta, \gamma,h)$,
 $$ ||\mathcal{M}(z)_g^{N(h)}||_{L^2 \to L^2} \leq h^\gamma e^{-N(h)\tau  \frac{\im z }{h} } \; ; \; \tau = \sup_{ |\eta_0| \leq \eta} ||t_\infty^{(\eta_0)}||_\infty$$
From (\ref{equivalent_finite_rank}), we see that for $N(h) \sim \delta |\log h|$, $$M_g(z)^{N(h)} = \Pi_g \mathcal{M}_g^{N(h)} \Pi_g + O( (\log h) h^{K_1})$$ so that we deduce that $M_g(z)$ satisfies also the conclusion of Proposition \ref{Prop_Crucial_Estimate} and hence, of Proposition \ref{Prop_Estimate_M}.  As a direct consequence, we obtain that for $h$ small enough, $E_\pm(z)$ is invertible for all $z \in \Omega(\eta, \gamma,h) $ and it satisfies for some $\beta >0$ : 

$$ || E_\pm (z)^{-1}||_{\mathcal{H} \to \mathcal{H}} \leq h^{-\beta}$$

We now conclude the proof as in the case of obstacle scattering, essentially replacing the formula (\ref{Key_formula_from_M_to_R}) by the standard Schur complement formula for the Grushin problem above : $E_\pm(z)$ is invertible if and only if $P_\theta - z$ is and 
$$ (P_{\theta,G} - z)^{-1} = E(z) - E_+(z) E_\pm(z)^{-1}  E_-(z)$$
Then, for $h$ small enough and for $z \in \Omega(\eta, \gamma, h)$, 
$$||(P_{\theta,G} - z)^{-1}||_{L^2 \to H_h^2} = O(h^{-1}) + O(h^{-\beta -2}) = O(h^{-\beta-2})$$
which gives 
$$||(P_{\theta} - z)^{-1}||_{L^2 \to H_h^2} = O(h^{-\beta-2 - 2K_G})$$
where $c_0$ depends on $G_0$.

\section{Application to the local energy decay for the wave equation}\label{Section_decay_energy}
We present an application of the resolvent estimate  obtained in the case of obstacle scattering to the decay of the local energy for the wave equation outside the obstacles. In this note, we follow the main arguments of \cite{Burq99} to prove Theorem \ref{Thm3}. 

\subsection{Resolvent estimates }

Let us rewrite the resolvent estimate of Theorem \ref{Thm2} in term of $\lambda$ : there exists $\gamma >0$, $\lambda_0 >0$ and $\beta >0$ such that for any $\chi \in \cinfc(\R^2)$ equal to one in a neighborhood of $\overline{\mathcal{O}}$, there exists $C_\chi >0$ such that for all $ \lambda \in \C$, 
\begin{equation}
 |\lambda| \geq \lambda_0, \im \lambda \geq - \gamma \implies ||\chi R(\lambda) \chi ||_{L^2 \to L^2} \leq C_\chi |\lambda|^\beta
\end{equation}
Recalling that for $f \in L^2_{comp}$, with $g= R(\lambda)f$ it holds that $\chi g \in H_0^1 (\Omega)$ and $g$ satisfies $-\Delta g + \lambda^2 g = f$, it is not hard to see that the above estimate implies that 
\begin{equation}
 ||\chi R(\lambda) \chi ||_{L^2 \to H^1_0} \leq C^\prime_\chi |\lambda|^{\beta+1}
\end{equation}
for $|\lambda| \geq \lambda_0$ and $\im \lambda \geq - \gamma$ (see for instance the proof of Proposition 2.5 in \cite{BuGeTz}). 

This gives resolvent estimates for large $\lambda$. We will also need to control the resolvent for small $\lambda$, in angular neighborhoods of the logarithmic singularity at $0$. For this purpose, we state a consequence of a result proved in \cite{Burq99} (Appendix B.2) : 

\begin{lem}
For $\varepsilon >0$, let $\mathcal{S}_\varepsilon = \{ \lambda \in \C^*, |\lambda| \leq \varepsilon, \arg \lambda \in [-\pi/4, 5\pi/4] \}$. \\
There exists $\varepsilon_0>0$ such that there is no resonance in $\mathcal{S}_{\varepsilon_0}$ and for any $\chi \in \cinfc(\R^2)$ equal to one in a neighborhood of $\overline{\mathcal{O}}$, there exists $C_\chi>0$ such that for all $\lambda \in \mathcal{S}_{\varepsilon_0}$, 
\begin{equation}
||\chi R(\lambda) \chi ||_{L^2 \to H^1_0} \leq C_\chi 
\end{equation}
\end{lem}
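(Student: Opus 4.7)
\medskip

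\noindent\textbf{Proof plan.} The strategy is to split the argument into two independent pieces: first obtain the $L^2 \to L^2$ bound on $\chi R(\lambda)\chi$ in the sector $\mathcal{S}_{\varepsilon_0}$ by directly invoking the analysis of Burq in Appendix B.2 of \cite{Burq99}, and then upgrade the target norm from $L^2$ to $H^1_0$ using the Dirichlet equation satisfied by $R(\lambda)f$.

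\medskip

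\noindent\textbf{Step 1: the $L^2\to L^2$ bound near zero.} In dimension two, the free outgoing resolvent $R_0(\lambda) = (-\Delta - \lambda^2)^{-1}$ has Schwartz kernel $\frac{i}{4}H_0^{(1)}(\lambda|x-y|)$. Because $H_0^{(1)}(z) = \frac{2i}{\pi}\log z + \text{holomorphic}$ near $z=0$, the cut-off free resolvent $\chi R_0(\lambda) \chi$ extends meromorphically to the logarithmic cover $\Lambda$ and, in any closed subsector of $\{\arg\lambda \in (-\pi/2 - \alpha,\, 3\pi/2 + \alpha)\}$ with $\alpha$ small, the logarithmic blow-up is \emph{uniformly} absorbed by the cut-offs: this is exactly the content of \cite{Burq99}, Appendix~B.2. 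The sector $\mathcal{S}_\varepsilon$ of the statement, with $\arg \lambda \in [-\pi/4, 5\pi/4]$, lies safely inside this range. To transfer the estimate from $R_0$ to $R$, one uses the standard parametrix $R(\lambda) = R_0(\lambda) - R_0(\lambda)\,[\Delta,\psi]\,R_{\Omega}(\lambda)(1-\psi) + \ldots$ (with $\psi$ a suitable cut-off supported near $\overline{\mathcal{O}}$) combined with analytic Fredholm theory. The key non-degeneracy input is that $0$ is not an $L^2$-eigenvalue of the exterior Dirichlet Laplacian $-\Delta_\Omega$ (the domain $\Omega$ being unbounded, any $H^1_0$ harmonic function in $\Omega$ must vanish by maximum principle together with the decay at infinity). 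This rules out a resonance at $0$ on the physical sheet, and continuity of the parametrix construction extends this to a small sector $\mathcal{S}_{\varepsilon_0}$, yielding a uniform bound
$$\|\chi R(\lambda)\chi\|_{L^2\to L^2} \leq C_\chi, \qquad \lambda \in \mathcal{S}_{\varepsilon_0}.$$

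\medskip

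\noindent\textbf{Step 2: upgrade to $H^1_0$.} Fix $f \in L^2(\Omega)$ with $\supp f \subset \{\chi = 1\}$, and set $g = R(\lambda)f \in H^2_{\mathrm{loc}}(\Omega) \cap H^1_0(\Omega)$. Then $-\Delta g = f + \lambda^2 g$ in $\Omega$. Pick a cut-off $\widetilde{\chi} \in \cinfc(\R^2)$ equal to one on $\supp \chi$ and equal to one in a neighborhood of $\overline{\mathcal{O}}$. Multiplying the equation by $\widetilde{\chi}^2 \overline{g}$ and integrating by parts in $\Omega$ (no boundary contribution thanks to the Dirichlet condition) gives
\begin{equation*}
\int_\Omega |\nabla(\widetilde{\chi}\, g)|^2 \;=\; \re\!\int_\Omega \widetilde{\chi}^{\,2} f \,\overline{g} \;+\; \lambda^2 \!\int_\Omega \widetilde{\chi}^{\,2} |g|^2 \;+\; \int_\Omega |\nabla \widetilde{\chi}|^2 |g|^2.
\end{equation*}
Since $|\lambda|\leq \varepsilon_0$ is bounded and $\widetilde{\chi}\,g = \widetilde{\chi} R(\lambda) \chi f$ is controlled in $L^2$ by Step~1 (applied with the slightly enlarged cut-off $\widetilde{\chi}$), Cauchy--Schwarz gives $\|\widetilde{\chi}\, g\|_{H^1} \leq C_\chi' \|f\|_{L^2}$, and hence $\|\chi g\|_{H^1_0} \leq C_\chi \|f\|_{L^2}$ as wanted.

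\medskip

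\noindent\textbf{Main obstacle.} The analytically delicate piece is really Step~1: controlling the Hankel logarithmic singularity uniformly on the relevant portion of the logarithmic cover and excluding a resonance at $0$ on that sheet. This is precisely the content of Burq's Appendix~B.2, which we invoke as a black box; the rest of the argument is routine.
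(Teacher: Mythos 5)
The paper does not actually prove this lemma: it is stated as a direct consequence of \cite{Burq99}, Appendix B.2, with no further argument. Since your Step 1 ultimately invokes that same appendix as a black box, and your Step 2 is the standard elliptic upgrade from $L^2$ to $H^1$ (the same device the paper uses, via \cite{BuGeTz}, for the large-$\lambda$ regime), your proposal and the paper rest on the same external input and your overall architecture is reasonable.

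However, the narrative you wrap around Step 1 contains a genuine error that would sink the argument if one tried to carry it out as written. In dimension $2$ the cut-off \emph{free} resolvent $\chi R_0(\lambda)\chi$ is \emph{not} uniformly bounded as $\lambda \to 0$: from $H_0^{(1)}(z) = \tfrac{2i}{\pi}J_0(z)\log z + (\text{entire})$, the kernel on a fixed compact set is $\tfrac{i}{4}H_0^{(1)}(\lambda|x-y|) = -\tfrac{1}{2\pi}\log\lambda + O(1)$, so $\|\chi R_0(\lambda)\chi\|_{L^2\to L^2}$ grows like $|\log\lambda|\,\|\chi\|_{L^2}^2$. The logarithmic blow-up is therefore not ``absorbed by the cut-offs,'' and a parametrix of the form $R = R_0 + (\text{corrections})$ combined with analytic Fredholm theory cannot by itself yield a uniform bound for $\chi R(\lambda)\chi$; one must track the $\log\lambda$ terms and show that they cancel thanks to the Dirichlet boundary condition. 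That cancellation is precisely the content of Burq's Appendix B.2 (which concerns the exterior problem, not the free resolvent), so your misattribution is not merely cosmetic. Relatedly, the non-degeneracy needed at $\lambda=0$ in two dimensions is not the absence of an $L^2$ eigenvalue (which your maximum-principle remark addresses) but the absence of a \emph{zero resonance}, i.e.\ a bounded, generally non-$L^2$, harmonic function vanishing on $\partial\Omega$ with the appropriate behaviour at infinity; ruling this out is part of what makes the two-dimensional low-frequency analysis delicate. Your Step 2 is correct modulo taking real parts (the identity should read $\re(\lambda^2)\int\widetilde{\chi}^2|g|^2$, as the other terms are real).
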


Finally, we also mention the following result, proved in \cite{Burq99} (Appendix B.1), which will be used below : 

\begin{lem}
There are no real resonances (that is with $\arg(z)=0$ or $\pi$). 
\end{lem}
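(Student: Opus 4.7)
The plan is to derive a contradiction from the assumption that some $\lambda_0 \in \R \setminus \{0\}$ with $\arg \lambda_0 \in \{0, \pi\}$ is a pole of the meromorphic continuation of $\chi R(\lambda) \chi$. Taking a residue at such a pole produces a nontrivial resonant state: a function $u \in H^1_{\mathrm{loc}}(\Omega)$ with $u|_{\partial \Omega} = 0$, solving $(-\Delta - \lambda_0^2) u = 0$ in $\Omega$, and satisfying the outgoing Sommerfeld radiation condition $(\partial_r - i\lambda_0) u = o(r^{-1/2})$ outside a large ball containing $\overline{\mathcal{O}}$ (the sign of $\lambda_0$ appearing in this condition depends on whether $\arg \lambda_0 = 0$ or $\pi$, i.e.\ on the sheet of the logarithmic cover $\Lambda$ on which $\lambda_0$ is viewed; alternatively one may pass to $\bar u$ to reduce the case $\arg \lambda_0 = \pi$ to $\arg \lambda_0 = 0$).

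The heart of the argument is the classical Rellich uniqueness theorem in exterior domains. Applying Green's identity to $u$ and $\bar u$ on annuli $\{R \leq |x| \leq R'\}$ in the exterior of $\overline{\mathcal{O}}$, the boundary flux through a sphere of radius $r$ reduces, via the outgoing condition, to an imaginary multiple of $\lambda_0 \int_{|x|=r} |u|^2 \, d\sigma$. On the other hand, because $u$ and $\bar u$ both solve the homogeneous Helmholtz equation in the annulus, Green's identity forces the same flux to be real; both sides must therefore vanish, which yields $\int_{|x|=r} |u|^2 \, d\sigma \to 0$ as $r \to \infty$. Expanding $u$ in angular Fourier modes outside a large ball and comparing with Hankel functions at frequency $|\lambda_0|$ then allows Rellich's classical argument to conclude that $u \equiv 0$ outside a sufficiently large ball.

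Finally, since $u$ vanishes on a nonempty open subset of the connected open set $\Omega$ and satisfies an elliptic equation with smooth coefficients there, Aronszajn's unique continuation principle yields $u \equiv 0$ in $\Omega$, contradicting the nontriviality of the resonant state. The only delicate point is the careful extraction of the outgoing resonant state from the residue of the meromorphically continued resolvent on the correct sheet of $\Lambda$ and the identification of the associated Sommerfeld condition at frequency $\lambda_0$; once this is set up, the Rellich uniqueness step and the Aronszajn unique continuation step are entirely classical, and neither depends on the multiobstacle geometry nor on the Ikawa no-eclipse condition.
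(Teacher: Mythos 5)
The paper does not prove this lemma itself: it invokes Appendix B.1 of \cite{Burq99}, where the absence of nonzero real resonances for the exterior Dirichlet problem is established by precisely the scheme you outline (outgoing resonant state from the Laurent expansion at the pole, Rellich uniqueness, unique continuation), so your overall plan is the standard one. However, your execution of the central flux step has a genuine gap. Green's identity for $u$ and $\bar u$ on an annulus $\{R\le |x|\le R'\}$ exterior to the obstacles only yields that the flux $F(r)=\int_{|x|=r}\bigl(\bar u\,\partial_r u-u\,\partial_r\bar u\bigr)\,d\sigma$ is independent of $r$; it cannot ``force the flux to be real''. Indeed $F(r)$ is purely imaginary for \emph{any} function $u$, and for a genuine outgoing solution it is in general a nonzero imaginary constant: for $u=H^{(1)}_0(\lambda_0|x|)$ one has $F\neq 0$ and $\int_{|x|=r}|u|^2\,d\sigma$ tends to a positive limit, so nothing vanishes. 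Since your flux argument never uses the Dirichlet condition $u|_{\partial\Omega}=0$, it would equally ``prove'' that every outgoing solution of the Helmholtz equation vanishes, which is false.

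The ingredient that actually kills the flux is Green's identity over $\Omega\cap B(0,R)$ rather than over an annulus: the volume term vanishes because $\lambda_0^2$ is real and $(-\Delta-\lambda_0^2)u=0$, and the boundary term on $\partial\Omega$ vanishes because $u|_{\partial\Omega}=0$ (elliptic regularity up to the boundary justifies the integration by parts there); hence $F(R)=0$ for every large $R$. Only then does the outgoing condition give $\lambda_0\int_{|x|=r}|u|^2\,d\sigma\to 0$, after which the expansion in angular modes against Hankel functions (Rellich) yields $u\equiv 0$ outside a large ball, and Aronszajn's unique continuation on the connected set $\Omega$ finishes as you say. With the boundary condition inserted at that point --- and with the routine verification, which you correctly flagged, that the leading Laurent coefficient of $\chi R(\lambda)\chi$ at a real pole has range consisting of nontrivial outgoing solutions of the homogeneous equation --- the proof is correct and coincides with the argument cited from \cite{Burq99}.
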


\subsection{The wave equation generator}

Let $H$ be the Hilbert space $\mathcal{H}(\Omega) \oplus L^2(\Omega)$, where $\mathcal{H}$ is the completion of $\cinfc(\Omega)$ with respect to the norm $||f||_{\mathcal{H}} =||\nabla f ||_{L^2(\Omega)}$\footnote{This choice of Hilbert space makes the wave propagator unitary on $H$, since the energy of a solution of the wave equation is its norm in $H$;  see \cite{TaylorII}, Chapter 9, Section 4} and let $A$ be the operator 
$$ A = \left( \begin{matrix}
0 & \Id \\
\Delta & 0 
\end{matrix} \right)$$
with domain $D(A) =( \mathcal{H} \cap H^2(\Omega)) \oplus H_0^1(\Omega)$. $A$ is maximal dissipative, so that Hille-Yosida theory allows to define the propagator $e^{tA}$ and for $(u_0,u_1) \in H$, the first component $u(t)$ of $t \mapsto e^{tA} (u_0,u_1)$ is the unique solution of the following Cauchy problem 
$$ \left\{ \begin{array}{l}
\partial_t^2 u - \Delta u = 0 \\
u|_{t=0} = u_0 \\
\partial_t u|_{t=0} = u_1 
\end{array} \right. $$
Note also that since $A$ is maximal dissipative, for $\xi$ with $\re(\xi) >0$, $A- \xi$ is invertible and 
\begin{equation}\label{justifies_convergence_integral}
 ||(A- \xi)^{-1}||_{H \to H} \leq |\re \xi|^{-1}
\end{equation}
The global energy of the solution is defined as 
$$ E(t) = \frac{1}{2}||(u(t),\partial_t u(t) ) ||_H^2 =  \frac{1}{2} \int_{\Omega} |\nabla u(t)|^2 + |\partial_t u(t)|^2 $$ 
It is conserved. If $K \Subset \R^2$, we also define the local energy in $K$ as 
$$ E_K(t) =  \frac{1}{2} \int_{K \cap \Omega} |\nabla u(t)|^2 + |\partial_t u(t)|^2$$
Note that, by Poincaré inequality, if $B \subset \overline{\Omega}$ is bounded and if $\chi \in \cinfc(\R^2)$ is equal to one in a neighborhood of $\overline{\mathcal{O}}$ and is supported in $B$, then for $f \in \mathcal{H}(\Omega)$, 
$$ ||\chi f||_{\mathcal{H}} \sim ||\chi f ||_{H^1(B)} \sim ||\chi f||_{L^2(B)} + || \nabla (\chi f)||_{L^2(B)}$$
If $\chi \in \cinfc(\R^2)$ is equal to one in a neighborhood of $\overline{\mathcal{O}}$, by abuse we note $\chi$ the bounded operator of $(u,v) \in H \mapsto (\chi u, \chi v) \in H$. 

A short computation shows that for $\lambda \in \C$, $(u_0, u_1) \in D(A)$ and $(v_0,v_1) \in H$, 

$$ (A + i \lambda) \left( \begin{matrix} u_0 \\u_1  \end{matrix} \right) =\left( \begin{matrix} v_0 \\v_1  \end{matrix} \right)  \iff \left\{ \begin{array}{l}
(- \Delta - \lambda^2) u_0 = i \lambda v_0 -v_1 \\
u_1 = v_0 - i \lambda u_0 
\end{array} \right.$$
This relation and the remark above for bounded sets $B$ show that for any $\chi \in \cinfc(\R^2)$, the cut-off resolvent $\chi (A+ i \lambda)^{-1} \chi$, well defined for $\im \lambda >0$ extends to the logarithmic cover $\Lambda$ of $\C$ and we have for $\lambda \in \Lambda$ 

\begin{equation}
\chi (A+ i\lambda)^{-1} \chi = \left( \begin{matrix}
i \lambda \chi R(\lambda) \chi & - \chi R(\lambda) \chi \\
\chi^2 + \lambda^2 \chi R(\lambda) \chi & i \lambda \chi R(\lambda) \chi 
\end{matrix} \right) 
\end{equation}
We deduce that $\chi (A + i \lambda)^{-1} \chi $  has no real resonance and satisfies the following resolvent estimates, for some constant $C_\chi$, 
\begin{align}
\label{estimate_log_sing}\lambda \in \mathcal{S}_{\varepsilon_0}& \implies  \left| \left| \chi (A + i \lambda)^{-1} \chi \right| \right|_{H \to H} \leq C_{\chi} \\
\label{estimate_strips} |\lambda| \geq \lambda_0 ,  \im \lambda \geq - \gamma  &\implies  \left| \left| \chi (A + i \lambda)^{-1} \chi \right| \right|_{H \to H}  \leq C_{\chi}|\lambda|^{\beta+2} 
\end{align}

\subsection{Proof of the local energy decay}
Let us fix $R >0$ such that $\mathcal{O} \Subset B(0,R)$. We want to estimate the local energy in $B(0,R)$ for solutions with initial data supported in $B(0,R)$, and sufficiently regular, that is in $D(A^k)$ for a sufficiently large $k$. As we will see, the decay will hold for data in $D(A^k)$ with $k \geq \beta + 4$, where $\beta$ is the one appearing in (\ref{estimate_strips}). For this purpose, let us fix $\chi \in \cinfc(\R^2)$ such that $\chi = 1$ in $B(0,R)$. 

Let $U_0  \in D(A^k)$ with $\supp(U_0) \Subset B(0,R)$. We want to estimate the energy of $\chi e^{tA} U_0$, or equivalently, we want to control $\left| \left| \chi e^{tA}  U_0\right|\right|_H$. Let us write $U = (I-A)^{k}U_0 \in H$, so that $ ||U_0||_{D(A^k)}=||U||_H$. It is clear that we have $\supp (U) \subset B(0,R)$, so that $U = \chi U$. With this notation, we want to show that there exists $C_R >0$ such that for all $t \geq 1$, 
$$ I(t) \coloneqq \left| \left| \chi e^{tA} (I-A)^{-k} \chi U  \right| \right|_H \leq \frac{C}{t}||U||_H$$

The starting point of the proof is the following formula : 

\begin{lem}
Assume that $k \geq 2$. 
For $t \geq 0$ and for $U \in H$, we have 
\begin{equation}\label{Integral}
e^{tA} (\I-A)^{-k}U = \frac{-1}{2\pi} \int_{ \lambda \in \frac{i}{2} + \R} e^{-it \lambda} \frac{1}{(1+i \lambda)^k} (A + i \lambda)^{-1} U d \lambda
  \end{equation}
\end{lem}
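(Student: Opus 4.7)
The plan is to verify (\ref{Integral}) by Fourier inversion, using the regularization $(\I-A)^{-k}$ to ensure convergence, and to simplify the resulting operator via a partial-fraction identity. For the convergence of the right-hand side: on the contour $\lambda = \sigma + i/2$ ($\sigma \in \R$), applying (\ref{justifies_convergence_integral}) with $\xi = -i\lambda$ (so $\re \xi = 1/2$) gives $\|(A+i\lambda)^{-1}\|_{H\to H} \leq 2$; combined with $|(1+i\lambda)^{-k}| \lesssim (1+|\sigma|)^{-k}$ and $|e^{-it\lambda}| = e^{t/2}$, the integrand is absolutely integrable as an $H$-valued function as soon as $k \geq 2$.

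The heart of the argument is a Fourier--Laplace identity. Since $A$ generates a contraction semigroup on $H$, the Hille--Yosida formula yields, for any $V \in H$ and $\im \lambda > 0$,
$$\int_0^\infty e^{it\lambda}\, e^{tA}V\, dt = -(A+i\lambda)^{-1}V.$$
Applied with $V = (\I-A)^{-k}U$, the regularization makes this Fourier transform $O(|\sigma|^{-k-1})$ on the line $\{\im \lambda = 1/2\}$; together with continuity of $F(t) := e^{tA}(\I-A)^{-k}U$ (extended by zero for $t<0$), this justifies Fourier inversion and gives, for $t > 0$, the preliminary identity
$$e^{tA}(\I-A)^{-k}U = \frac{-1}{2\pi}\int_{i/2+\R} e^{-it\lambda}\,(A+i\lambda)^{-1}(\I-A)^{-k}U\, d\lambda.$$

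The remaining task is to replace the operator $(A+i\lambda)^{-1}(\I-A)^{-k}$ appearing inside this integral by $(1+i\lambda)^{-k}(A+i\lambda)^{-1}$. For this, expanding $(1+i\lambda)^k = \bigl((\I-A) + (A+i\lambda)\bigr)^k$ on $D(A^k)$ via the binomial theorem and then dividing by $(1+i\lambda)^k(A+i\lambda)(\I-A)^k$ yields the bounded-operator identity on $H$
$$(A+i\lambda)^{-1}(\I-A)^{-k} = \frac{(A+i\lambda)^{-1}}{(1+i\lambda)^k} + \frac{1}{(1+i\lambda)^k}\sum_{j=1}^{k}\binom{k}{j}(A+i\lambda)^{j-1}(\I-A)^{-j}.$$
Each correction term $R_j(\lambda) := (1+i\lambda)^{-k}(A+i\lambda)^{j-1}(\I-A)^{-j}U$ is an $H$-valued function holomorphic on $\{\im \lambda < 1\}$ (its only pole lies at $\lambda = i$, above the contour), with $\|R_j(\lambda)\|_H = O(|\lambda|^{-1})$ at infinity because $j \leq k$. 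For $t > 0$, closing the contour in the lower half-plane and invoking Jordan's lemma (using $|e^{-it\lambda}| = e^{t\im \lambda}$ decaying as $\im \lambda \to -\infty$) gives $\int_{i/2+\R} e^{-it\lambda}R_j(\lambda)\,d\lambda = 0$. Substituting back into the preliminary identity produces (\ref{Integral}) for $t > 0$, and continuity at $t = 0$ extends it to $t \geq 0$.

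The main technical subtlety is the $j = k$ correction term: there $R_k(\lambda)$ only decays like $|\lambda|^{-1}$, so the integral is not absolutely convergent, and one really needs Jordan's lemma (on a large semicircle in the lower half-plane, exploiting the exponential decay of $e^{-it\lambda}$) rather than a naive dominated-convergence argument.
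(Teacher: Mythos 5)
Your argument is correct in outline but takes a genuinely different route from the paper. The paper verifies directly that the right-hand side $I(t)$ of (\ref{Integral}) solves the semigroup's Cauchy problem: it computes $(\partial_t - A)I(t)=0$ using $(-i\lambda-A)(A+i\lambda)^{-1}=-\Id$ together with the vanishing of $\int_{i/2+\R}e^{-it\lambda}(1+i\lambda)^{-k}\,d\lambda$, and then identifies $I(0)=(\Id-A)^{-k}U$ by closing the contour in the upper half-plane and computing the residue of the order-$k$ pole at $\lambda=i$; uniqueness for the generator $A$ then gives the identity. You instead invert the Hille--Yosida Laplace transform to get the integral with $(A+i\lambda)^{-1}(\I-A)^{-k}U$ in the integrand, and then trade that kernel for $(1+i\lambda)^{-k}(A+i\lambda)^{-1}U$ via the binomial identity $(1+i\lambda)^k=\bigl((\I-A)+(A+i\lambda)\bigr)^k$, killing the correction terms by closing the contour downward. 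Your algebraic identity is correct (the $R_j$ are bounded, holomorphic off $\lambda=i$, and $O(|\lambda|^{j-1-k})$), and the contour-closing step for the $R_j$ is sound. The paper's route avoids Fourier inversion entirely and needs only the residue calculus; yours is more self-contained on the semigroup side but pushes the work into the inversion theorem.

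One claim in your write-up is wrong and should be repaired: the transform $-(A+i\lambda)^{-1}(\I-A)^{-k}U$ on the line $\im\lambda=1/2$ is \emph{not} $O(|\sigma|^{-k-1})$; it is only $O(|\sigma|^{-1})$ (indeed $(A+i\lambda)^{-1}V=\tfrac{1}{i\lambda}V+O(|\lambda|^{-2})$ for $V\in D(A)$, and the leading $\tfrac{1}{i\lambda}(\I-A)^{-k}U$ term cannot decay faster — this is exactly the $j=k$ correction term you single out later, so your two statements are inconsistent with each other). Consequently the preliminary inversion integral is only conditionally convergent, and "absolute integrability'' cannot be the justification for Fourier inversion at that stage. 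The fix is standard: either invoke pointwise Fourier inversion in the principal-value sense for the piecewise-$C^1$, bounded-variation function $e^{-t/2}F(t)$ at a point of continuity $t>0$, or perform the algebraic decomposition \emph{before} inverting, so that the only term requiring a symmetric truncation and Jordan's lemma is $R_k$, exactly as you do afterwards. With that repair the proof is complete; the extension to $t=0$ by continuity (dominated convergence on the right-hand side, strong continuity of $e^{tA}$ on the left) is fine.
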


\begin{proof}
First remark that the integral $I(t)$ in the right hand side is absolutely convergent in virtue of (\ref{justifies_convergence_integral}) and since $k \geq 2$. \\
Differentiating the right hand side with respect to $t$, we find that 
\begin{align*}
(\partial_t -A )I(t) &=  \frac{-1}{2\pi} \int_{ \lambda \in \frac{i}{2} + \R} e^{-it \lambda} \frac{1}{(1+i \lambda)^k} (-i \lambda -A)(A + i \lambda)^{-1} U d \lambda \\
&= \frac{1}{2\pi} \int_{ \lambda \in \frac{i}{2} + \R} e^{-it \lambda} \frac{1}{(1+i \lambda)^k} U d \lambda =0
\end{align*}
(To see that the last integral is equal to zero, one can for instance perform a contour deformation from $\im(\lambda) = 1/2$ to $\im (\lambda) = -\rho$ and let $\rho$ tend to $+ \infty$. )\\
Finally, we need to check that $I(0) = (\Id-A)^{-k} U$. We have $$I(0) =  \frac{-1}{2\pi} \int_{ \lambda \in \frac{i}{2} + \R}  \frac{1}{(1+i \lambda)^k} (A + i \lambda)^{-1} U d \lambda $$ 
We perform a contour deformation. Let $r >1$ and let $\Gamma_r$ be rectangle joining the points $i/2 + r , r(1+i) , r(i-1), i/2 - r$.  We also note $\gamma_r = \Gamma_r \setminus [-r + i/2, r + i/2]$. The function $g_k : z \mapsto -(1+iz)^{-k} (iz \Id + A)^{-1}U$ is meromorphic in $\im z >0$, with a unique pole at $z=i$. As a consequence, we find that 
\begin{align*}
\frac{1}{2i \pi} \int_{\Gamma_r} g_k(z) dz  = \text{Res}_{z=i} g_k = \frac{-1}{i^k (k-1)!} \partial_z^{k-1} ((iz\Id + A)^{-1})U |_{z=i} = i (\Id-A)^{-k} U 
\end{align*}
Hence, we have 
\begin{align*}
I(0) =\lim_{r \to + \infty} \frac{-i}{2i \pi } \int_{i/2 -r}^{i/2+r} g_k(\lambda) d\lambda = \lim_{r \to + \infty} -i\left( i(\Id - A)^{-k}U - \int_{\gamma_r} g_k(z) dz \right)=  (\Id - A)^{-k}U 
\end{align*} 
Indeed, it is not hard to see that the contribution on $\gamma_r$ tends to 0 as $r \to + \infty$. 
\end{proof}

\textit{End of proof of Theorem \ref{Thm3}}. The proof relies on a contour deformation below the real axis in the integral of (\ref{Integral}) where the cut-off $\chi$ is inserted, but we need to get around the logaritmic singularity at 0 : it is possible due to (\ref{estimate_log_sing}). 

We fix $t>0$. In the estimates below, the constants denoted by $C$ (or $C_k)$ do not depend on $t$. 
We know that the map $ \lambda \mapsto \chi (A+i\lambda)^{-1} \chi U $ is meromorphic in $\C \setminus i\R^-$, with no poles in $\{ \im \lambda >0 \} \cup \mathcal{S}_{\varepsilon_0} \cup \{ \im \lambda \geq - \gamma, |\lambda| \geq \lambda_0 \}$. By taking $\varepsilon_0$ smaller if necessary, we may assume that $2^{-1/2} \varepsilon_0  \leq \gamma$. Let $K$ be the union of the rectangles $K_+$ and $K_-$ where 
$$ K_{\pm} = \{ \lambda \in \C, \re \lambda \in [\pm 2^{-1/2} \varepsilon_0, \pm \lambda_0] , \im \lambda \in [-2^{-1/2} \varepsilon_0, 0] \}$$
Since there is only a finite number of resonance in $K$ and since there are no resonances on $K \cap \{ \im \lambda = 0 \}$, we can find $\delta >0$ such that there is no resonance in $K \cap \{ \im \lambda \geq - \delta\}$ and since this region is compact, we can find $C$ such that for $\lambda \in K \cap \{ \im \lambda \geq - \delta\}$, 
$$ || \chi (A+ i\lambda)^{-1} \chi||_{ H \to H } \leq C $$
Let's note $z^+$ (resp. $z^-$) the unique point in $\{ |z| = \varepsilon_0 \} \cap \{ \im z= - \delta \} \cap \{\pm \re z >0 \}$. Fix $r \gg 1$ and $0 < \varepsilon < \varepsilon_0$ and let's note $z_\varepsilon^\pm$ the point of the segment $[0, z^\pm]$ with norm $\varepsilon$ and let's introduce the following paths, oriented from the left point to the right point : 
\begin{align*}
&\gamma_r^+ = [z^+, r - i \delta] \; ; \; \gamma_r^-=[-r- i \delta, z^-] \\
& \gamma_\varepsilon^+ = [z^+_\varepsilon, z^+] \; ; \;  \gamma_\varepsilon^+ = [z^-, z_\varepsilon^-] \\
&l_r^+ =[r-i\delta, r +i/2] \; ; \; l_r^- = [-r-i\delta, -r+i/2] \\
&l_r = [-r+i/2, r+i/2]
\end{align*}
and $ \mathcal{C}_\varepsilon $ be the arc of the circle $\{|z| = \varepsilon\} $ from $z^-_\varepsilon$ to $z^+_\varepsilon$. (See Figure \ref{figure_contour}). 

\begin{figure}
\usetikzlibrary{arrows}
\usetikzlibrary{decorations.markings}
\tikzstyle directed=[postaction={decorate,decoration={markings,
    mark=at position .65 with {\arrow{latex}}}}]
\begin{tikzpicture}[line cap=round,line join=round,>=triangle 45,x=1cm,y=1cm]
\begin{axis}[
x=1cm,y=1cm,
axis lines=middle,
xtick=\empty, 
ytick=\empty,
xmin=-4,
xmax=4,
ymin=-1,
ymax=2,
yticklabels={}, xticklabels={}]
\clip(-4,-1.5) rectangle (4,2.5);
\draw [line width=0.5pt,directed] (0.5,-0.5)-- (3.5,-0.5);
\draw [line width=0.5pt,directed] (-3.5,1.5)-- (3.5,1.5);
\draw [line width=0.5pt,directed] (3.5,-0.5)-- (3.5,1.5);
\draw [line width=0.5pt,directed] (-3.5,1.5)-- (-3.5,-0.5);
\draw [line width=0.5pt,directed] (-3.5,-0.5)-- (-0.5,-0.5) ;
\draw [line width=0.5pt]  plot[domain=-0.7853981633974483:3.9269908169872414,variable=\t]({0.2*cos(\t r)},{0.2*sin(\t r)});
\draw [line width=0.5pt,directed] (-0.5,-0.5)-- (-0.14142135623730953,-0.14142135623730953);
\draw [line width=0.5pt,directed] (0.1414213562373095,-0.1414213562373095)-- (0.5,-0.5);
\begin{scriptsize}
\draw [fill=black] (0.5,-0.5) circle (1pt);
\draw [fill=black] (3.5,-0.5) circle (1pt);
\draw[color=black] (2,-0.5) node[below] {$\gamma_r^+$};
\draw [fill=black] (-3.5,1.5) circle (1pt);
\draw [fill=black] (3.5,1.5) circle (1pt);
\draw[color=black] (-0.6,1.5) node[above] {$l_r$};
\draw[color=black] (3.5,0.7) node[left] {$l_r^+$};
\draw [fill=black] (-3.5,-0.5) circle (1pt);
\draw[color=black] (-3.5, 0.6) node[right] {$l_r^-$};
\draw [fill=black] (-0.5,-0.5) circle (1pt);
\draw[color=black] (-2,-0.5) node[below] {$\gamma_r^-$};
\draw [fill=blue] (-0.14142135623730953,-0.14142135623730953) circle (2pt);
\draw [fill=red] (0.1414213562373095,-0.1414213562373095) circle (2pt);
\draw[color=black] (0,0.2) node[above right] {$\mathcal{C}_\varepsilon$};
\draw[color=black] (-0.2,-0.2) node[left] {$\gamma_\varepsilon^-$};
\draw[color=black] (0.2,-0.2) node[right] {$\gamma_\varepsilon^+$};
\end{scriptsize}
\end{axis}
\end{tikzpicture}
\caption{The contour used to deform the integral. $z^-$ (resp. $z^+$) is the blue (resp. red) point on the figure.}
\label{figure_contour}
\end{figure}

With $f_k(z) = -\frac{1}{2\pi} e^{-itz} (1+iz)^{-k} \chi (A+iz)^{-1} \chi U$, 
we have $\chi e^{tA} (\Id-A)^{-k} \chi U = \lim_{r \to + \infty} \int_{l_r} f_k(z) dz$ and since $f_k(z)$ is holomorphic in a neighborhood of the compact set surrounded by the above contours, we have 
$$ \int_{l_r} f_k(z) dz= \int_{l_r^+} f_k(z) dz+\int_{l_r^-} f_k(z) dz+\int_{\gamma_r^+} f_k(z) dz+\int_{\gamma_r^-} f_k(z) dz+\int_{\mathcal{C_\varepsilon}} f_k(z) dz+\int_{\gamma_\varepsilon^+} f_k(z) dz+\int_{\gamma_\varepsilon^-} f_k(z) dz$$
Note that $\mathcal{C}_\varepsilon \cup \gamma_\varepsilon^+ \cup \gamma_\varepsilon^- \subset \mathcal{S}_{\varepsilon_0}$. As a consequence, we have 
$$ \left| \left| \int_{\mathcal{C_\varepsilon}} f_k(z) dz\right|\right|_H \leq C_k  \varepsilon ||U||_H  \to_{ \varepsilon \to 0} 0$$
and, with $\theta = \arg z^+$, 
$$ \left| \left| \int_{\gamma_\varepsilon^+} f_k(z) \right| \right|_H \leq C_k  \int_{\varepsilon}^{\varepsilon_0} e^{t s \sin \theta} ||U||_H ds \leq C_k  \frac{||U||_H}{t |\sin \theta |} \left( e^{t \varepsilon \sin \theta} - e^{t \varepsilon_0 \sin \theta} \right) \leq \frac{C_k}{t} ||U||_H$$
The case of $\gamma_\varepsilon^-$ is treated similarly.\\
On $\gamma_r^\pm$, the following holds : 
$$ ||\chi (A+i\lambda)^{-1} \chi ||_{H \to H} \leq C|\lambda|^{\beta+2}$$ 
Indeed, this is true for $|\lambda| \geq \lambda_0$ and there is no resonance on $\gamma_r^\pm$. As a consequence, for $\lambda = -i \delta + \xi$, $|\xi| \geq \re (z_+)$, we have 
$$ ||\chi e^{-it \lambda} (1+i\lambda)^{-k} (A+ i \lambda)^{-1} \chi ||_{H \to H} \leq C e^{-t \delta} |\xi|^{\beta +2 - k}$$
 Hence, we assume here that $\boxed{k \geq \beta + 4}$ so that 
 $$\left| \left| \int_{\gamma_r^\pm} f_k(z)  dz \right| \right|_H \leq C \int_{\re(z^+)}^r e^{-t \delta}|\xi|^{\beta+2-k} ||U||_H d \xi \leq Ce^{-t \delta} \int_{\re(z^+)}^{+\infty} |\xi|^{-2} ||U||_H d\xi \leq Ce^{-t\delta} ||U||_H$$
Finally, we treat the vertical segments $l_r^\pm$. 
$$\left| \left| \int_{l_r^\pm} f_k(z)  dz \right| \right|_H \leq C \sup_{y \in [-\delta, 1/2]} ||f_k(\pm r + iy )||_H$$ 
For $y \in [-\delta,1/2]$, we have 
\begin{align*}
||f_k(\pm r + iy )||_H &\leq Ce^{ty} r^{-k} ||\chi(A+i (r + iy))^{-1} \chi ||_{H \to H} \times ||U||_H\\
& \leq C e^{t/2} r^{-k} ||\chi(A+i (r + iy))^{-1} \chi ||_{H \to H} \times ||U||_H
\end{align*}
Using (\ref{estimate_strips}), we find that for $ y \in [-\delta,1/2]$,  $||\chi(A+i (r + iy))^{-1} \chi ||_{H \to H} \leq Cr^{\beta+2}$. As a consequence, one finds that for $y \in [-\delta,1/2]$,  $$||f_k(\pm r + iy )||_H \leq Ce^{t/2}r^{\beta+2-k} ||U||_H \leq Ce^{t/2} r^{-2}  ||U||_H$$
As a consequence, 
$$ \left| \left| \int_{l_r} f_k(z)dz \right| \right|_H \leq \left( \frac{C_k}{t} + C e^{-t\delta} + C_k \varepsilon + Ce^{t/2} r^{-2} \right)||U||_H$$
By letting $ \varepsilon$ tending to 0 and $r$ to $+ \infty$, we conclude that 
$$ ||\chi e^{tA} (\Id-A)^{-k} \chi U||_H \leq \left( \frac{C_k}{t} + Ce^{-t \delta} \right)||U||_H \leq \frac{C_k}{t} ||U||_H $$
which gives the required result.

\appendix

\section{Tools of semiclassical analysis}\label{Appendix}
We review the most important notions of semiclassical analysis needed in this note. . 
\subsection{Pseudodifferential operators and Weyl quantization}
We recall some basic notions and properties of the Weyl quantization on $\R^n$. We refer the reader to \cite{ZW} for the proofs of the statements and further considerations on semiclassical analysis and quantizations. We start by defining classes of $h$-dependent symbols. 

\begin{defi}
Let $0 \leq \delta \leq \frac{1}{2}$. We say that an $h$-dependent family $a \coloneqq \left( a (\cdot ; h) \right)_{0 < h \leqslant 1}$ is in the class $S_\delta(T^*\R^n)$ (or simply $S_\delta$ if there is no ambiguity) if for every $\alpha \in \N^{2n}$, there exists $C_\alpha >0$ such that : 
$$\forall 0< h \leq 1, \sup_{(x,\xi) \in \R^n} | \partial^\alpha a (x,\xi ; h) | \leq C_\alpha h^{-\delta |\alpha|}$$
\end{defi}
In this paper, we will mostly be concerned with $\delta <1/2$. We will also use the notation $S_{0^+} = \bigcap_{\delta >0} S_\delta$. \\
We write $a =  O(h^N)_{S_\delta}$ to mean that for every $\alpha \in \N^{2n}$, there exists $C_{\alpha,N}$ such that 
$$\forall 0< h \leq 1, \sup_{(x,\xi) \in \R^n}   | \partial^\alpha a (x,\xi ; h) | \leq  C_{\alpha,N} h^{-\delta |\alpha|} h^N $$ 
 If $a=O(h^N)_{S_\delta}$ for all $N \in \N$, we'll write $a= O(h^\infty)_{S_\delta}$. 

For a given symbol $a \in S_\delta(T^*\R^n)$, we say that $a$ has a compact essential support if there exists a compact set $K$ such : 
$$ \forall \chi \in \cinfc(\R^n), \supp \chi \cap K = \emptyset \implies \chi a =\hinf_{\mathcal{S}(T^*\R^n) }$$
(here $\mathcal{S}$ stands for the Schwartz space). We note $ \text{ess} \supp a \subset K$ and say that $a$ belongs to the class $S_\delta^{comp}(T^*\R^n) $. The essential support of $a$ is then the intersection of all such compact $K$'s. In particular, the class $S_{\delta}^{comp}$ contains all the symbols in $S_\delta$ supported in a $h$-independent compact set and these symbols correspond, modulo $\hinf_{\mathcal{S}(T^*\R) }$, to all symbols of $S_\delta^{comp}$. For this reason, we will adopt the following notation : for an open set $\Omega \subset \R^n$,  $a \in S_\delta^{comp}(\Omega) \iff \text{ess} \supp a \Subset \Omega$.

For a symbol $a \in S_\delta(T^*\R^n)$, we will quantize it using Weyl's quantization procedure. It is written as : 
$$(\op(a)u)(x) = (a^W(x,hD_x) u)(x)= \frac{1}{(2\pi h)^n }\int_{\R^{2n}} a\left( \frac{x+y}{2}, \xi \right)u(y) e^{i \frac{(x-y)\cdot \xi}{h}}dy d \xi$$

We will note $\Psi_\delta(\R^n)$ the corresponding classes of pseudodifferential operators. By definition, the wavefront set of $A = \op(a)$ is $\WF(A) = \text{ess} \supp a $. 
\vspace*{0.5cm}

We say that a family $u=u(h) \in \mathcal{D}^\prime(\R^n)$ is $h$-tempered if for every $\chi \in \cinfc(\R^n)$, there exist $C >0$ and $N \in \N$ such that $|| \chi u ||_{H_h^{-N}} \leq Ch^{-N}$. For a $h$-tempered family $u$, we say that a point $\rho \in T^*\R^n$ does \emph{not} belong to the wavefront set of $u$ if there exists $a \in S^{comp}(T^*\R^n)$ such that $a(\rho) \neq 0$ and $\op(a)u = \hinf_{\mathcal{S}}$. We note $\WF(u)$ the wavefront set of $u$. 

We say that a family of operators $B=B(h) : \cinfc(\R^{n_2}) \to \mathcal{D}^\prime(\R^{n_1})$ is $h$-tempered  if its Schwartz kernel $\mathcal{K}_B \in \mathcal{D}^\prime(\R^{n_1} \times \R^{n_2})$ is $h$-tempered. The wavefront set of $B$, denoted $\WF^\prime(B)$ is defined as
$$ \WF^\prime(B) = \{ ( x,\xi, y , -\eta) \in T^*\R^{n_1}  \times T^* \R^{n_2} ,  (x,\xi, y , \eta)  \in \WF(\mathcal{K}_B )\}$$

The Calderon-Vaillancourt Theorem asserts that pseudodifferential in $\Psi_\delta$ are bounded on $L^2$ and as a consequence of the sharp Gärding inequality (see \cite{ZW}, Theorem 4.32), we also have a precise estimate of $L^2$ norms of pseudodifferential operator, 

\begin{prop}\label{Garding}
Assume that $a \in S_\delta(\R^{2n})$. Then, there exists $C_a$ depending on a finite number of semi-norms of $a$ such that : 
$$ || \op(a) ||_{L^2 \to L^2} \leq ||a||_\infty + C_ah^{\frac{1}{2} - \delta}$$
\end{prop}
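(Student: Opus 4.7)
The plan is to bound $\|\op(a) u\|_{L^2}^2 = \langle \op(a)^* \op(a) u, u\rangle$ and then take a square root at the end. First I would use that the Weyl quantization gives $\op(a)^* = \op(\bar a)$, and that the Moyal product computation in the class $S_\delta$ (with $\delta < 1/2$) yields
$$\op(a)^* \op(a) = \op(|a|^2) + \op(r_h), \qquad r_h = O(h^{1-2\delta})_{S_\delta(\R^{2n})},$$
where the $S_\delta$-seminorms of $r_h$ are controlled by finitely many seminorms of $a$. By the Calder\'on--Vaillancourt theorem applied to $h^{-(1-2\delta)} r_h$, one then gets $\op(r_h) = O(h^{1-2\delta})_{L^2 \to L^2}$ with the same kind of constant.

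The core of the argument is then an application of the sharp G{\aa}rding inequality (Theorem 4.32 in \cite{ZW}) to the non-negative symbol
$$b \coloneqq \|a\|_\infty^2 - |a|^2 \in S_\delta(\R^{2n}),$$
whose seminorms are dominated (via Leibniz) by $\|a\|_\infty$ times finitely many seminorms of $a$. This produces
$$\langle \op(|a|^2) u, u \rangle \leq \|a\|_\infty^2 \|u\|^2 + C_a\, h^{1-2\delta} \|u\|^2.$$
Combining this with the remainder bound of the previous paragraph,
$$\|\op(a) u\|^2 \leq \|a\|_\infty^2 \|u\|^2 + C'_a\, h^{1-2\delta} \|u\|^2,$$
and the elementary inequality $\sqrt{x+y} \leq \sqrt{x} + \sqrt{y}$ (for $x,y \geq 0$) converts this into
$$\|\op(a) u\|_{L^2} \leq \bigl(\|a\|_\infty + \sqrt{C'_a}\, h^{1/2-\delta}\bigr) \|u\|_{L^2},$$
which is the announced bound.

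There is no conceptual obstacle here: the entire content of the statement is that sharp G{\aa}rding applied to the pointwise gap $\|a\|_\infty^2 - |a|^2$ becomes, after a square-root step, a refinement of the Calder\'on--Vaillancourt $L^2$-norm estimate with a correction of order $h^{1/2-\delta}$. The only care required is bookkeeping: one has to check that the composition remainder $r_h$ and the sharp-G{\aa}rding constant both depend on only a finite number of seminorms of $a$, which is guaranteed by the standard forms of those two results.
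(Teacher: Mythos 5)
Your proof is correct and is exactly the derivation the paper intends: the statement appears there without proof, presented as a direct consequence of the sharp G\aa rding inequality (\cite{ZW}, Theorem 4.32), and your argument --- writing $\|\op(a)u\|^2=\langle \op(a)^*\op(a)u,u\rangle$, using the Moyal expansion to reduce to $\op(|a|^2)$, applying sharp G\aa rding to the non-negative symbol $\|a\|_\infty^2-|a|^2$, and taking a square root --- is the standard way to fill that in. The only implicit restriction is $\delta<1/2$ for the composition expansion, but at $\delta=1/2$ the claimed bound degenerates to the Calder\'on--Vaillancourt estimate anyway, so nothing is lost.
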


We recall that the Weyl quantizations of real symbols are self-adjoint in $L^2$. The composition of two pseudodifferential operators in $\Psi_\delta$ is still a pseudodifferential operator. More precisely (see \cite{ZW}, Theorem 4.11 and 4.18), if $a , b \in S_\delta$, $\op(a) \circ \op(b)$ is given by $\op(a \# b)$, where $a \# b$ is the Moyal product of $a$ and $b$. It is given by 
$$ a \# b (\rho) = e^{i h A(D) } (a \otimes b)|_{ \rho_1= \rho_2 = \rho} $$ 
where $a \otimes b (\rho_1, \rho_2) = a(\rho_1) b(\rho_2)$, $e^{i h A(D) }$ is a Fourier multiplier acting on functions on $\R^{4n}$ and, writing $\rho_i = (x_i, \xi_i)$,  $$A(D) = \frac{1}{2} \left( D_{\xi_1} \circ D_{x_2} - D_{x_1} \circ D_{\xi_2} \right) $$

\subsection{Fourier Integral Operators}
We review some aspects of the theory of Fourier integral operators. We follow \cite{ZW}, Chapter 11 and \cite{NSZ14}. We refer the reader to \cite{GuSt} for further details or to \cite{Al08}. Finally, we will give the precise definition needed to understand the definition \ref{def_FIO}. 

\subsubsection{Local symplectomorphisms and their quantization}
Let us note $\mathcal{K}$ the set of symplectomorphisms $\kappa : T^*\R^n \to T^* \R^n$ such that the following holds : there exist continuous and piecewise smooth families of smooth functions $(\kappa_t)_{t \in [0,1]} $, $(q_t)_{t \in [0,1]}$ such that : \begin{itemize}[nosep]
\item $\forall t \in [0,1]$, $\kappa_t : T^*\R^n \to T^*\R^n$ is a symplectomorphism ; 
\item $\kappa_0 = \Id_{T^* \R^n} , \kappa_1 = \kappa$ ; 
\item $\forall t \in [0,1], \kappa_t(0)=0 $ ; 
\item there exists $K \Subset T^*\R^n$ compact such that $\forall t \in [0,1], q_t : T^*\R^n \to \R$ and $ \supp q_t \subset K$ ; 
\item $\frac{d}{dt} \kappa_t = \left( \kappa_t \right)^\star H_{q_t}$
\end{itemize}
We recall  \cite{ZW}, Lemma 11.4, which asserts that local symplectomorphisms can be seen as elements of $\mathcal{K}$, as soon as we have some geometric freedom. 

\begin{lem}\label{lemma_local_symp}
Let $U_0, U_1$ be open and precompact subsets of $T^*\R^n$. Assume that $\kappa : U_0 \to U_1$ is a local symplectomorphism   that extends to $V_0 \Supset U_0$ an open star-shaped set. Then, there exists $\tilde{\kappa} \in \mathcal{K}$ such that $\tilde{\kappa}|_{U_0} = \kappa$. 
\end{lem}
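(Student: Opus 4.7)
My plan is to build the required isotopy in two stages, first deforming $\kappa$ to its linearization at $0$ by exploiting the star-shape of $V_0$, and then deforming the linearization to the identity using path-connectedness of the symplectic group $\mathrm{Sp}(2n,\mathbb R)$. Compact support of the generating Hamiltonian will then be arranged by cutting off, using that we only require $\tilde\kappa_{|U_0}=\kappa$.

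For the first stage I assume (as is implicit in the fact that $\mathcal K$ forces $\kappa_t(0)=0$) that $V_0$ is star-shaped with respect to $0$ and $\kappa(0)=0$. For $t\in(0,1]$ set $\kappa_t(\rho)=t^{-1}\kappa(t\rho)$. Because $V_0$ is star-shaped, $t\rho\in V_0$ for every $\rho\in\overline{U_0}$ and $t\in[0,1]$, so $\kappa_t$ is well-defined on a neighborhood of $\overline{U_0}$; it is a symplectomorphism because $\rho\mapsto t\rho$ scales the symplectic form by $t^2$ and $\rho\mapsto t^{-1}\rho$ rescales it by $t^{-2}$. A Taylor expansion $\kappa(t\rho)=t\,d\kappa(0)\rho+O(t^2)$ shows that $\kappa_t$ extends smoothly up to $t=0$ with $\kappa_0=d\kappa(0)\in\mathrm{Sp}(2n,\mathbb R)$, and $\kappa_t(0)=0$ throughout. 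For the second stage, using that $\mathrm{Sp}(2n,\mathbb R)$ is connected, I pick a smooth path $A_s\in\mathrm{Sp}(2n,\mathbb R)$ with $A_0=I$, $A_1=d\kappa(0)$; each $A_s$ is a Hamiltonian flow of a (time-dependent) quadratic Hamiltonian. Concatenating (with a smooth reparametrization) the linear path $s\mapsto A_s$ followed by the scaling path $t\mapsto\kappa_t$ yields a piecewise smooth symplectic isotopy $(\tilde\kappa_t)_{t\in[0,1]}$ with $\tilde\kappa_0=\mathrm{Id}$, $\tilde\kappa_1=\kappa$ on $U_0$, and $\tilde\kappa_t(0)=0$ for all $t$.

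Any such symplectic isotopy is generated by a time-dependent Hamiltonian $q_t$, i.e.\ $\frac{d}{dt}\tilde\kappa_t=(\tilde\kappa_t)^\star H_{q_t}$, defined on a neighborhood of the trajectory set $\mathcal O:=\bigcup_{t\in[0,1]}\tilde\kappa_t(\overline{U_0})\cup\{0\}$, which is compact. I then choose $K\Subset T^*\mathbb R^n$ compact and $\chi\in\cinfc(T^*\mathbb R^n;[0,1])$ with $\chi\equiv1$ on an open neighborhood of $\mathcal O$ and $\supp\chi\subset K$, and define $\tilde q_t=\chi\,q_t$. The Hamiltonian flow of $\tilde q_t$ is globally defined on $T^*\mathbb R^n$, equals the identity outside $K$, and agrees with the flow of $q_t$ along every trajectory staying inside the region $\{\chi=1\}$; in particular it agrees with $\tilde\kappa_t$ on $\overline{U_0}$ and fixes $0$. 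This yields an element of $\mathcal K$ extending $\kappa_{|U_0}$.

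The only mildly delicate step is checking the smooth extension of the scaling isotopy at $t=0$ and the smoothness of the concatenation at the junction with the linear path; both are standard Taylor-expansion computations together with a reparametrization vanishing to infinite order at the gluing time, and I do not anticipate real difficulty there. The rest is bookkeeping: verifying that $\chi\,q_t$ still produces the correct flow on $U_0$ reduces to the elementary ODE fact that two time-dependent vector fields which coincide along a given integral curve produce the same integral curve.
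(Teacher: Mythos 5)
Your proof is correct and follows essentially the same route as the argument the paper relies on (it cites \cite{ZW}, Lemma 11.4 rather than reproving it): the Alexander-trick scaling $\kappa_t(\rho)=t^{-1}\kappa(t\rho)$ made possible by the star-shape of $V_0$, connectivity of $\mathrm{Sp}(2n,\R)$ to join the linearization to the identity, and a compactly supported cut-off of the generating Hamiltonian, which leaves the flow unchanged on the trajectory set. No gaps worth flagging.
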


 If $\kappa \in \mathcal{K}$ and  if $(q_t)$ denotes the family of smooth functions associated with $\kappa$ in its definition, we note $Q(t) = \op(q_t)$. It is a continuous and piecewise smooth family of operators. Then the Cauchy problem
 
 \begin{equation}\label{Cauchy_pb_Egorov}
\left\{ \begin{array}{c}
hD_t U(t) + U(t)Q(t) = 0 \\
U(0)=\Id
\end{array}
\right. 
\end{equation}
is globally well-posed.

From now on, we restrict to the case $n=1$. Following \cite{NSZ14}, Definition 3.9, we adopt the definition : 

\begin{defi}
Let $\kappa \in \mathcal{K}$ and let us note $C = Gr^\prime(\kappa) = \{ (x, \xi, y , - \eta), (x,\xi) = \kappa(y,\eta) \}$ the twisted graph of $\kappa$. \\
Fix $\delta \in [0,1/2)$. We say that $U \in I_\delta(\R \times \R; C )$ if there exists $a \in S_\delta(T^*\R)$ and a path $(\kappa_t)$ from $\Id$ to $\kappa$ satisfying the above assumptions such that $U = \op(a)U(1)$, where $t \mapsto U(t)$ is the solution of the Cauchy problem (\ref{Cauchy_pb_Egorov}). 

The class $I_{0^+}(\R \times \R, C)$ is by definition $\bigcap_{\delta >0} I_\delta(\R \times \R, C)$. 
\end{defi}

It is a standard result, known as Egorov's theorem (see \cite{ZW}, Theorem 11.1) that if $U(t)$ solves the Cauchy problem (\ref{Cauchy_pb_Egorov}) and if $a \in S_\delta$, then $U^{-1} \op(a) U$ is a pseudodifferential operator in $\Psi_\delta$ and if $b= a \circ \kappa$, then 
$U^{-1} \op(a) U - \op(b) \in h^{1- 2 \delta} \Psi_\delta$.

\begin{rem}\text{}
Applying Egorov's theorem and Beals's theorem, it is possible to show that if $(\kappa_t)$ is a closed path from $\Id$ to $\Id$, and $U(t)$ solves (\ref{Cauchy_pb_Egorov}), then $U(1) \in \Psi_0(\R^n)$. In other words, $I_\delta(\R \times \R , \Gr^\prime(\Id) ) \subset \Psi_\delta(\R^n)$. But the other inclusion is trivial. Hence, this in an equality : 
$$I_\delta(\R \times \R , \Gr^\prime(\Id) ) = \Psi_\delta(\R^n)$$
The notation $I(\R\times \R, C)$ comes from the fact that the Schwartz kernels of such operators are Lagrangian distributions associated to $C$, and in particular have wavefront sets included in $C$. As a consequence, if $T \in I_\delta(\R\times \R, C)$, $\WF^\prime(T) \subset \Gr(\kappa)$. 
\end{rem}

We also recall that the composition of two Fourier integral operators is still a Fourier integral operator : if $\kappa_1, \kappa_2 \in \mathcal{K}$ and $T_1 \in I_\delta(\R \times \R, \Gr^\prime(\kappa_1) ) , T_2 \in I_\delta(\R \times \R, \Gr^\prime(\kappa_1) )$, then, 
$$ T_1 \circ T_2 \in  I_\delta(\R \times \R, \Gr^\prime(\kappa_1 \circ \kappa_2) )$$ 

\subsubsection{Quantization of open symplectic maps}\label{appendix_symplectic_map_quantization}
As in Section \ref{Section_2}, we consider a symplectic map $F$ which is the union of local open symplectic $F_{ij} : \widetilde{D}_{ij} \subset U_j \to \widetilde{A}_{ij} \subset U_i$, where $U_i \subset T^*\R$ are open sets.  We keep the same notations. In particular, $\mathcal{T}$ is the trapped set and the full arrival (resp. departure) set is $\widetilde{A}$ (resp. $\widetilde{D}$). 
We fix a compact set $W \subset \tilde{A}$ containing some neighborhood of $\mathcal{T}$. Our definition will depend on $W$ and, is not, in some sense, canonical. 
Following \cite{NSZ14} (Section 3.4.2), we now focus on the definition of the elements of $I_\delta(Y \times Y; \Gr(F)^\prime)$. An element $T \in I_\delta(Y \times Y; \Gr(F)^\prime)$ is a matrix of operators  
$$T = (T_{ij})_{1 \leq i,j\leq J } : \bigoplus_{j=1}^J L^2(Y_j) \to \bigoplus_{i=1}^J L^2(Y_i)$$
Each $T_{ij}$ is an element of $I_\delta(Y_i \times Y_j , \Gr(F_{ij})^\prime )$. Let's now describe the recipe to construct elements of $I_\delta(Y_i \times Y_j, \Gr(F_{ij})^\prime )$. 

\begin{itemize}
\item Fix some small $\varepsilon >0$ and two open covers of $U_j$, $U_j\subset \bigcup_{l=1}^L \Omega_l$, $\Omega_l \Subset \widetilde{\Omega}_l$, with $\widetilde{\Omega}_l$ star-shaped and having diameter smaller than $\varepsilon$. We note $\mathcal{L}$ the sets of indices $l$ such that $\Omega_l \subset \widetilde{D}_{ij} \subset U_j$ and we require (this is possible if $\varepsilon$ is small enough)  
$$ F^{-1}(W) \cap U_j \subset \bigcup_{l \in \mathcal{L}} \Omega_l $$

\item Introduce a smooth partition of unity associated to the cover $(\Omega_l)$, $(\chi_l)_{1 \leq l \leq L} \in \cinfc(\Omega_l, [0,1])$, $\supp \chi_l \subset \Omega_l$, $\sum_l \chi_l = 1$ in a neighborhood of $\overline{U_j}$.
\item  For each $l \in \mathcal{L}$, we denote $F_l$ the restriction to $\widetilde{\Omega}_l$ of $F_{ij}$. By Lemma \ref{lemma_local_symp}, there exists $\kappa_l \in \mathcal{K}$ which coincides with $F_l$ on $\Omega_l$. 
\item  We consider $T_l=\op(\alpha_l) U_l(1)$ where $U_l(t)$ is the solution of the Cauchy problem (\ref{Cauchy_pb_Egorov}) associated to $\kappa_l$ and $\alpha_l \in S_\delta^{comp}(T^*\R)$. 
\item We set 

\begin{equation}\label{def_FIO_global}T^\R = \sum_{l \in \mathcal{L}} T_l \op(\chi_l) : L^2(\R) \to L^2(\R)
\end{equation} 
$T^\R$ is a globally defined Fourier integral operator. We will note $T^\R \in I_\delta(\R \times \R, \Gr(F_{ij})^\prime)$. Its wavefront set is included in $\widetilde{A}_{ij} \times \widetilde{D}_{ij}$. 
\item Finally, we fix cut-off functions $(\Psi_i, \Psi_j) \in \cinfc(Y_i, [0,1]) \times \cinfc(Y_j, [0,1])$ such that $\Psi_i \equiv 1$ on $\pi(U_i)$ and $\Psi_j \equiv 1$ on $\pi(U_j)$(here, $\pi : (x,\xi) \in T^*Y_{\cdot} \mapsto  x \in Y_{\cdot}$ is the natural projection) and we adopt the following definitions : 
\end{itemize}

\begin{defi}\label{Def_FIO_local}
We say that $T : \mathcal{D}^\prime(Y_j) \to \cinf(\overline{Y_i})$ is a Fourier integral operator in the class $I_\delta(Y_i \times Y_j, \Gr(F_{ij})^\prime)$  if there exists $T^\R \in I_\delta(\R \times \R, \Gr(F_{ij})^\prime)$ as constructed above such that
\begin{itemize}
\item $T - \Psi_i T \Psi_j = \hinf_{\mathcal{D}^\prime(Y_{j}) \to \cinf(\overline{Y}_i)}$; 
\item $\Psi_i T \Psi_j =  \Psi_i T^\R \Psi_j $
\end{itemize}
\end{defi}

For $U^\prime_j \subset U_j$ and $U_i^\prime = F(U_j^\prime) \subset U_i$, we say that $T$ (or $T^\R$)  is microlocally unitary in $U_i^\prime \times U_j^\prime$ if $TT^* = \Id$ microlocally in $U_i^\prime$ and $T^*T = \Id$ microlocally in $U_j^\prime$.

\bibliographystyle{alpha}
\bibliography{biblio_these}

\end{document}